\newtheorem{theorem}{Theorem}%[section]
\newtheorem{conjecture}[theorem]{Conjecture}
\newtheorem{corollary}[theorem]{Corollary}
\newtheorem{lemma}[theorem]{Lemma}
\newtheorem{proposition}[theorem]{Proposition}
\newtheorem{problem}[theorem]{Problem}
\newtheorem{oproblem}[theorem]{Open Problem}
\newcommand{\LD}{\gamma^{LD}}
\newcommand{\footremember}[2]{%
	\footnote{#2}
	\newcounter{#1}
	\setcounter{#1}{\value{footnote}}%
}
\newcommand{\footrecall}[1]{%
	\footnotemark[\value{#1}]%
}
\def\thmheadcasestyle#1#2#3{%
  \thmname{#1}\thmnumber{\@ifnotempty{#1}{ }\@upn{#2}}%
  \thmnote{{\the\thm@notefont: \textbf{#3}}}}
\newtheoremstyle{casestyle}% Name
  {}% space above
  {}% space below
  {\normalfont}% body font
  {0pt}% indent
  {\bfseries}% head font
  {. \vspace{0.05em}\newline}% punctuation after head
  { }% space after head (has to be space or dimension!)
  {\thmheadcasestyle{#1}{#2}{#3}}% head spec
\def\thmheadsubcasestyle#1#2#3{%
  \thmname{#1}\thmnumber{\@ifnotempty{#1}{ }\@upn{#2}}%
  \thmnote{{\the\thm@notefont: \textbf{#3}}}}
\newtheoremstyle{subcasestyle}% Name
  {}% space above
  {}% space below
  {\normalfont}% body font
  {0pt}% indent
  {\bfseries}% head font
  {.\vspace{0.05em}\newline}% punctuation after head
  { }% space after head (has to be space or dimension!)
  {\thmheadsubcasestyle{#1}{#2}{#3}}% head spec
\theoremstyle{casestyle}
\newtheorem{case}{Case}{}
\theoremstyle{subcasestyle}
\newtheorem{subcase}{$\blacktriangleright$ Case}[case]
\newtheorem{subsubcase}{$\blacktriangleright \blacktriangleright$ Case}[subcase]
\newtheorem{subsubsubcase}{$\blacktriangleright \blacktriangleright \blacktriangleright$ Case}[subsubcase]
\newtheoremstyle{claimstyle}% Name
  {}% space above
  {}% space below
  {\normalfont}% body font
  {0pt}% indent
  {\bfseries}% head font
  {.\vspace{0.1em}}% punctuation after head
  { }% space after head (has to be space or dimension!)
  {\thmheadsubcasestyle{#1}{#2}{#3}}% head spec
  \theoremstyle{claimstyle}
\newtheorem{claim}{$\blacksquare$ Claim}{}
\newcommand{\calG}{\mathcal{G}}
\title{The $n/2$-bound for locating-dominating sets in subcubic graphs}
\author{Dipayan Chakraborty\footnote{Université Clermont-Auvergne, CNRS, Mines de Saint-Étienne, Clermont-Auvergne-INP, LIMOS, 63000 Clermont-Ferrand, France.} \footnote{Department of Mathematics and Applied Mathematics, University of Johannesburg, South Africa.}
\and
Anni Hakanen\footremember{TY}{Department of Mathematics and Statistics, University of Turku, FI-20014 Turku, Finland.}
\and
Tuomo Lehtil\"a\footrecall{TY} 
}
\begin{document}

\maketitle

\begin{abstract}
The location-domination number is conjectured to be at most half of the order for twin-free graphs with no isolated vertices. We prove that this conjecture holds and is tight for subcubic graphs. We also show that the same upper bound holds for subcubic graphs with open twins of degree 3 and closed twins of any degree, but not for subcubic graphs with open twins of degree 1 or 2. These results then imply that the same upper bound holds for all cubic graphs (with or without twins) except $K_4$ and $K_{3,3}$. 

\medskip
\noindent \textbf{Keywords:} Location-domination, subcubic graph, cubic graph, twin-free
\end{abstract}

\section{Introduction}

In this article, we consider a well-known conjecture for locating-dominating sets on twin-free graphs for subcubic graphs. In particular, we prove the conjecture for subcubic graphs by giving a result stronger than the suggested conjecture. Furthermore, we answer positively to two open problems posed by Foucaud and Henning in~\cite{foucaud2016location}.

Let $G = (V(G),E(G))$ denote a graph with vertex set $V(G)$ and edge set $E(G)$. Let us denote by $N_G(v)$ and $N_G[v]$  (or $N(v)$ and $N[v]$ if  $G$ is clear from context) the \textit{open} and \textit{closed neighbourhoods}, respectively, of a vertex $v$ in a graph $G$.
We further denote by $I_G(S;v)=N_G[v]\cap S$ (or $I(v)$ if $S$ and $G$ are clear from context) the \emph{$I$-set} of $v$ in $G$ with respect to the set $S\subseteq V(G)$.
A set of vertices $S\subseteq V(G)$ is \textit{dominating} if each vertex of $G$ has a non-empty $I$-set, that is, if the vertices are  in $S$ or have an adjacent vertex in $S$. Moreover, a set  $S$ \textit{separates} vertices $u$ and $v$ if $I(u)\neq I(v)$. Similarly, we say that the set $S$ separates vertices in a set $V\subseteq V(G)$ if the $I$-set of each vertex in $V$ is unique.
Finally, a set $S$ is \textit{locating-dominating} in $G$ (LD-set for short) if $S$ is dominating in $G$ and for each pair of distinct vertices $u,v\in V(G)\setminus S$ we have $I(u)\neq I(v)$. The \textit{location-domination number} of a graph $G$, $\LD(G)$, denotes the size of a smallest locating-dominating set in $G$. The concept of location-domination was originally introduced by Slater and Rall in~\cite{rall1984location, slater1988dominating}. See the electronical bibliography~\cite{Lobstein} to find over 500 papers on topics related to location-domination.

The \textit{degree} of a vertex $v$ in a graph $G$, $\deg_G(v)$, denotes the number of vertices in $N(v)$. A graph is \emph{$r$-regular} if each vertex has the same degree~$r$. Furthermore, a $3$-regular graph is called \textit{cubic graph} and a graph in which every vertex has degree of at most~$3$ is called \textit{subcubic}. Two vertices $u,v$ are said to be \textit{open twins} if $N(u)=N(v)$ and \textit{closed twins} if $N[u]=N[v]$. A graph $G$ is \textit{open twin-free} if it has no pairs of open twins and is called \textit{closed twin-free} if it has no pairs of closed twins in it. Moreover, $G$ is called \textit{twin-free} if it is both open twin-free and closed twin-free. We denote by $n(G)$, or by $n$ if the graph is clear from context, the number of vertices in $G$ (i.e. the \textit{order} of $G$). Similarly, the number of edges is denoted by $m(G)$ or simply by $m$.

The following conjecture that we study in this paper was originally proposed in~\cite[Conjecture 2]{GGM2014} and was first considered in the formulation of Conjecture~\ref{conj_Garijo} in~\cite[Conjecture 2]{foucaud2016location}.
%%%%%%%%%%%
\begin{conjecture}[\cite{foucaud2016location,GGM2014}]\label{conj_Garijo}
Every twin-free graph $G$ of order $n$ without isolated vertices satisfies $\LD(G) \leq \frac{n}{2}$.
\end{conjecture}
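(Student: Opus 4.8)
\noindent\textit{Proof idea.}
The plan is to work with the complement of a locating-dominating set. Writing $S\subseteq V(G)$ and $W=V(G)\setminus S$, the set $S$ is an LD-set precisely when (i) every $w\in W$ has a neighbour in $S$ (so that $S$ dominates $G$), and (ii) the map $w\mapsto N[w]\cap S$ is injective on $W$. Since $N[w]\cap S=N[w']\cap S$ is equivalent to $(N[w]\triangle N[w'])\subseteq W$, and twin-freeness guarantees $N[w]\neq N[w']$ (hence $N[w]\triangle N[w']\neq\varnothing$) for every pair, condition (ii) says exactly that no such nonempty symmetric difference is swallowed by $W$. The target $\LD(G)\le n/2$ is then the statement that one can always find a set $W$ satisfying (i) and (ii) with $|W|\ge n/2$; equivalently, that at least half of the vertices can be made ``located non-codewords''.

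First I would reduce to the connected case. If $u$ and $v$ lie in different components then $N(u)$ and $N(v)$ are disjoint, so they are open twins only if both neighbourhoods are empty, i.e.\ both vertices are isolated; and $N[u]=N[v]$ is impossible across components since $u\in N[u]$. Hence all twins are confined to a single component, so every component of a twin-free graph without isolated vertices is itself twin-free and isolate-free. As both $\LD$ and $n$ add over components, it suffices to prove the bound for connected $G$.

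The main engine would be an induction on $n$ via \emph{reducible configurations}. I would search for a small vertex set $R$ together with a prescribed split of $R$ into code- and non-code-vertices such that: the reduced graph $G'=G-R$ (after deleting any resulting isolated vertices) is again twin-free and isolate-free; an optimal LD-set of $G'$ extends to an LD-set of $G$ by adjoining at most $|R|/2$ further code-vertices while repairing domination and separation along the boundary between $R$ and $G'$; and the deletion removes at least $|R|$ vertices. Feeding $\LD(G')\le (n-|R|)/2$ through such an extension then yields $\LD(G)\le n/2$. Here twin-freeness plays two roles: it supplies, for each pair endangered by the reduction, a separating vertex that can be forced into $S$, and it keeps $G'$ inside the induction hypothesis.

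The hard part will be showing that a reducible configuration always exists. In the bounded-degree regime treated in this paper, the finitely many local neighbourhood types can be enumerated and a discharging scheme made to certify one—this is precisely what the subcubic theorem accomplishes. In the general unbounded-degree setting neither the list of local configurations nor the charging is finite, and, more seriously, the separation condition (ii) is genuinely non-local: deleting or recolouring a single vertex can simultaneously merge the $I$-sets of many distant pairs whose symmetric differences happened to meet $R$. I therefore expect the decisive difficulty to be a \emph{degree-agnostic} control of separation, for which local discharging alone is insufficient. One plausible route is to combine a random placement of each vertex into $W$ with probability close to $\tfrac12$ with a deterministic twin-freeness-driven repair step; but keeping the repair cost at most $n/2$ while handling pairs $u,v$ whose closed neighbourhoods differ in only one vertex (so that a union bound over such pairs is too weak at probability $\tfrac12$) is exactly the obstruction that the general case leaves unresolved. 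The subcubic result of this paper should be read as the bounded-degree instance of this program, and as strong evidence for the full conjecture.
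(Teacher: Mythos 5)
You were asked about Conjecture~\ref{conj_Garijo}, which is precisely that --- a conjecture. The paper does not prove it: it proves the subcubic case (Proposition~\ref{propSubcubic}, extended in Theorems~\ref{theclosedtwins} and~\ref{thm_deg3}), and the general statement remains open. Your preliminary bookkeeping is correct: with $W = V(G)\setminus S$, separation of $u,v \notin S$ fails exactly when $\emptyset \neq N[u] \,\triangle\, N[v] \subseteq W$, twin-freeness guaranteeing the symmetric difference is nonempty; and your reduction to connected graphs is sound, since twins cannot straddle components unless both vertices are isolated, while $\LD$ and $n$ are additive over components. Your inductive scheme of reducible configurations with a prescribed code/non-code split is indeed the skeleton of the paper's subcubic argument: there a minimal counterexample is analysed locally around a $4$-cycle or a triangle (Theorem~\ref{The4cycle} dispatches the $C_4$-free base case), and Lemmas~\ref{LemLDSup} and~\ref{LemLDLeaves} force support vertices into, and leaves out of, the inherited LD-set so that the repair after each deletion costs at most half of the removed vertices.

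The genuine gap is the one you candidly name yourself: you never establish that a reducible configuration exists once the degree is unbounded, and without that the induction has no step --- so the proposal is a research programme, not a proof. Your fallback of random placement into $W$ with probability near $\tfrac12$ plus a deterministic repair fails quantitatively exactly where you indicate: twin-freeness only gives $|N[u] \,\triangle\, N[v]| \geq 1$, and a pair whose symmetric difference is a single vertex $w$ is unseparated precisely when $w \in W$, an event of probability about $\tfrac12$; a graph may contain $\Theta(n)$ such critical pairs sharing few witness vertices, so neither a union bound nor a local-lemma argument at density $\tfrac12$ applies, and every repair moves a vertex into $S$, whose total budget of $\tfrac{n}{2}$ is the entire content of the conjecture rather than something one may assume. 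To be clear, this does not put you at odds with the paper: the paper's own results are deliberately confined to subcubic graphs, exactly the bounded-degree regime where your finite list of local configurations and discharging certification exist, and where the non-locality of the separation condition (a single recolouring merging $I$-sets of distant pairs) can be controlled through Lemmas~\ref{LemC4neighb} and~\ref{LemTriEdge}. As a proof attempt of Conjecture~\ref{conj_Garijo} itself, however, your text is incomplete, and the statement remains open.
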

\noindent Both restrictions in Conjecture~\ref{conj_Garijo} are required since every isolated vertex is required to be in a dominating set, the complete graph $K_n$ (with closed twins) has $\LD(K_n)=n-1$ and the star $K_{1,n-1}$ (with open twins) has $\LD(K_{1,n-1})=n-1$. 

Conjecture~\ref{conj_Garijo} has been studied in at least~\cite{bousquet2024note, chakraborty2023three, claverol2021metric,  foucaud2016location, foucaud2017location, foucaud2016locating, GGM2014}. Variations on directed graphs have been studied in~\cite{bellitto2023locating, bousquet2023locating, foucaud2020domination} and for locating-total-dominating sets in~\cite{chakraborty2022progress, foucaud2017location, foucaud2016locating}. See~\cite{foucaud2022problems} for a short introduction to the conjecture. 
Besides introducing the conjecture, Garijo et al.~\cite{GGM2014} gave a general upper bound $\lfloor 2n/3\rfloor+1$ for location-domination number in twin-free graphs. Recently, Bousquet et al.~\cite{bousquet2024note} improved this general upper bound to $\LD(G)\leq\lceil 5n/8\rceil$. Besides proving general upper bounds, a lot of the research on this conjecture has concentrated on proving it for some graph classes. In particular, Garijo et al.~\cite{GGM2014} proved the following useful theorem.

\begin{theorem}[\cite{GGM2014}]\label{The4cycle}
Let $G$ be a connected twin-free graph without 4-cycles on $n\geq 2$  vertices. We have $$\LD(G)\leq \frac{n}{2}.$$ 
\end{theorem}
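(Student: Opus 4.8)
The plan is to first exploit the absence of $4$-cycles to turn the ``locating'' requirement into a purely local condition, and then to build a dominating set of size at most $n/2$ meeting that condition by a sweep over a spanning tree.

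\emph{Step 1: a separation lemma.} The key observation is that in a graph without $4$-cycles any two distinct vertices share at most one common neighbour, since two common neighbours $a,b$ of $u\neq v$ would create the $4$-cycle $u\,a\,v\,b\,u$. Consequently, if $S$ is a dominating set and $u,v\in V(G)\setminus S$ are distinct with $I(u)=I(v)$, then this common $I$-set cannot contain two vertices $a\neq b$ (again forcing the $4$-cycle $u\,a\,v\,b\,u$), so $I(u)=I(v)=\{s\}$ for a single $s\in S$. I would therefore first prove: a dominating set $S$ of a $4$-cycle-free graph is locating-dominating if and only if no $s\in S$ is the unique dominator of two distinct vertices of $V(G)\setminus S$. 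This replaces ``locating'' by the local bookkeeping condition that every $s\in S$ has at most one external private neighbour.

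\emph{Step 2: reduction.} By Step~1 it suffices to produce a dominating set $S$ with $|S|\le n/2$ in which each vertex privately dominates at most one vertex outside $S$. Domination alone is cheap: it is classical that every graph without isolated vertices admits a dominating set of size at most $n/2$, so the whole difficulty is to enforce the private-neighbour condition simultaneously without exceeding $n/2$. It is exactly here that twin-freeness enters: two vertices that are open or closed twins have equal $I$-sets under every choice of $S$ placing both outside $S$, so they can never be separated — this is why $K_3$ and the stars, though $4$-cycle-free, violate the bound.

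\emph{Step 3: the construction.} I would fix a spanning tree $T$ of $G$, root it at a vertex $r$, and build $S$ by a post-order (leaves-first) sweep, maintaining the invariant that after processing a subtree all its vertices except possibly its root are dominated, that the condition of Step~1 holds among processed vertices, and that at most one vertex is placed in $S$ per two vertices processed; this $1{:}2$ charging is what yields $\LD(G)\le |S|\le n/2$. At a vertex $v$ whose children are already processed, I would decide whether to place $v$ or certain children in $S$ so as to dominate any still-undominated child and to separate children that would otherwise share the singleton $I$-set $\{v\}$, using open-twin-freeness to guarantee that leaves hanging at $v$ are never genuinely indistinguishable.

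\emph{Main obstacle.} The bound $n/2$ is tight, as $\LD(P_4)=2$ shows, so the construction has no slack. The hard part is the case analysis at an internal vertex where a child must be separated from a sibling or from $v$ itself, since the naive fix of promoting an extra vertex to $S$ breaks the $1{:}2$ charging; showing that every separation forced by Step~1 can be absorbed by the pairing, and ruling out via twin-freeness the configurations that would overload a single dominator, is where the real work lies. The root $r$ and any leftover unpaired vertex then require separate, but routine, end-of-sweep handling.
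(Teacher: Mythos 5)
First, note that the paper you are working from does not actually prove this statement: Theorem~\ref{The4cycle} is imported from~\cite{GGM2014} and used as a black box, so there is no in-paper proof to compare against. Judged on its own, your Step~1 is correct and is the right opening move: in a graph with no $4$-cycle (as a subgraph) two distinct vertices have at most one common neighbour, hence two unseparated vertices $u,v$ outside a dominating set $S$ must share a singleton $I$-set $\{s\}$, and ``locating'' collapses to the local condition that no $s\in S$ is the unique dominator of two external vertices. This equivalence is genuine, and it is essentially the standard entry point for the $C_4$-free case.

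The gap is everything after that. Step~3 is a plan, not a proof: you explicitly defer the case analysis at an internal vertex, and that case analysis is the entire content of the theorem, since the bound is tight (already on $P_4$) and the $1{:}2$ charging has no slack. Worse, the mechanism you propose does not obviously survive the passage from the spanning tree $T$ to $G$. The condition of Step~1 is a condition on $I$-sets computed in $G$: a vertex $s$ that you place in $S$ while processing one subtree may, via non-tree edges, be the unique $G$-dominator of a vertex that was processed much earlier in a completely different part of $T$, and your post-order invariant (``the condition of Step~1 holds among processed vertices'') gives no handle on this, because inserting $s$ into $S$ retroactively changes the $I$-sets of already-processed vertices. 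For trees the sweep does work and recovers the classical $n/2$ bound, but for general $C_4$-free graphs you would need either to show that such non-tree collisions can always be absorbed without breaking the charging --- which is exactly where twin-freeness has to be used globally, not merely to distinguish sibling leaves --- or to replace the tree sweep by a different pairing or induction. As it stands, the proposal establishes a correct and useful reformulation of the problem, but not the theorem.
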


\noindent Furthermore, Garijo et al.~\cite{GGM2014} proved the conjecture for graphs with independence number at least $\lceil n/2\rceil$ (in particular this class includes bipartite graphs) and graphs with clique number at least $\lceil n/2\rceil+1$. In~\cite{foucaud2016locating}, Foucaud et al. proved the conjecture for split and co-bipartite graphs, in~\cite{chakraborty2023three}, Chakraborty et al. proved the conjecture for block graphs and, in~\cite{foucaud2017location}, Foucaud and Henning proved the conjecture for line graphs. The conjecture has also been proven for maximal outerplanar graphs in~\cite{claverol2021metric} by Claverol et al. and for cubic graphs in~\cite{foucaud2016location} by Foucaud and Henning. In this article, we concentrate on subcubic graphs. 

Besides upper bounds, also lower bounds of locating-dominating sets have been considered in the literature. In~\cite[Theorem 2]{slater2002fault}, Slater has given a lower bound for the location-domination number of $r$-regular graphs. In particular, the result states that $\LD(G)\geq \frac{n}{3}$ for cubic graphs. However, the proof also holds for subcubic graphs in general. 

In the following, we introduce some further notations. A \textit{leaf} is a vertex of degree one and a \textit{support vertex} is a vertex adjacent to a leaf. For a vertex $v\in V(G)$ we denote the graph formed from $G$ by removing $v$ and each edge incident with $v$ by $G-v$. Similarly, for an edge $e\in E(G)$ we denote by $G-e$ the graph obtained from $G$ by removing edge $e$. For a set $D$ of vertices and/or edges of $G$, we denote by $G-D$ the graph obtained by from $G$ by removing each vertex and edge in $D$. A graph is \textit{triangle-free} if each induced cycle in the graph has at least four vertices. 

A set of vertices $S$ is \textit{total dominating} in $G$ if each vertex in $V(G)$ is adjacent to another vertex in $S$. Furthermore, a set is \textit{locating-total dominating} if it is locating-dominating and total dominating.

%%%%%%%%%%%%%%%%%%%%%%%%%%

\subsection{Our results}\label{SecResults}

Our results consider Conjecture~\ref{conj_Garijo} and its expansions for subcubic graphs. Besides proving the conjecture for subcubic graphs, we also answer the following open problems posed by  Foucaud and Henning in~\cite{foucaud2016location}:

\begin{problem}[\cite{foucaud2016location}]\label{ProbSubcubic}
Determine whether Conjecture~\ref{conj_Garijo} can be proven for subcubic graphs.
\end{problem}

\begin{problem}[\cite{foucaud2016location}]\label{ProbCubicTwins}
Determine whether Conjecture~\ref{conj_Garijo} can be proven for connected cubic graphs in general (allowing twins) with the exception of a finite set of forbidden graphs.
\end{problem}

As Foucaud and Henning have noted, Problem~\ref{ProbCubicTwins} is a weaker form of a conjecture from~\cite{henning2012locating} by Henning and Löwenstein and open problem by Henning and Rad from~\cite{henningRad2012locating} stating that for a connected cubic graph $G$ the locating-total dominating number is at most half the number of vertices of $G$. In particular, this bound does not hold for subcubic graphs which might require two-thirds of vertices~\cite{chakraborty2022progress} (see Figure~\ref{Fig_totalCounter}). Moreover, as there has not been progress on this problem for over a decade, hopefully our results for locating-dominating sets can help in solving this stronger conjecture. 

\begin{figure}[!htb]
\centering
\begin{tikzpicture}[
blacknode/.style={circle, draw=black!, fill=black!, thick},
whitenode/.style={circle, draw=black!, fill=white!, thick},
scale=0.5]
\tiny
%right
\node[whitenode] (0) at (0,0) {};
\node[whitenode] (1) at (3,0) {};
\node[whitenode] (2) at (6,0) {};
\node[whitenode] (3) at (9,0) {};
\node[whitenode] (4) at (12,0) {};
\node[whitenode] (5) at (15,0) {};
\node[whitenode] (6) at (18,0) {};
\node[whitenode] (7) at (21,0) {};
\node[blacknode] (0') at (0,2) {};
\node[blacknode] (0'') at (0,4) {};

\node[blacknode] (1') at (3,2) {};
\node[blacknode] (1'') at (3,4) {};

\node[blacknode] (2') at (6,2) {};
\node[blacknode] (2'') at (6,4) {};

\node[blacknode] (3') at (9,2) {};
\node[blacknode] (3'') at (9,4) {};

\node[blacknode] (4') at (12,2) {};
\node[blacknode] (4'') at (12,4) {};

\node[blacknode] (5') at (15,2) {};
\node[blacknode] (5'') at (15,4) {};

\node[blacknode] (6') at (18,2) {};
\node[blacknode] (6'') at (18,4) {};

\node[blacknode] (7') at (21,2) {};
\node[blacknode] (7'') at (21,4) {};

%%
%right edges
\draw[-, thick] (0) -- (1);
\draw[-, thick] (1) -- (2);
\draw[-, thick] (2) -- (3);
\draw[-, thick] (3) -- (4);
\draw[-, thick] (4) -- (5);
\draw[-, thick] (5) -- (6);
\draw[-, thick] (6) -- (7);
\draw[dashed, thick] (7) -- (23,0);
\draw[dashed, thick] (0) -- (-2,0);
\draw[-, thick] (0) -- (0');
\draw[-, thick] (0') -- (0'');

\draw[-, thick] (1) -- (1');
\draw[-, thick] (1') -- (1'');

\draw[-, thick] (2) -- (2');
\draw[-, thick] (2') -- (2'');

\draw[-, thick] (3) -- (3');
\draw[-, thick] (3') -- (3'');

\draw[-, thick] (4) -- (4');
\draw[-, thick] (4') -- (4'');

\draw[-, thick] (5) -- (5');
\draw[-, thick] (5') -- (5'');

\draw[-, thick] (6) -- (6');
\draw[-, thick] (6') -- (6'');

\draw[-, thick] (7) -- (7');
\draw[-, thick] (7') -- (7'');
\end{tikzpicture}
\caption{Example of a subcubic graph which shows that the $\frac{n}{2}$-upper bound for the locating total-dominating number of a twin-free graph on $n$ vertices is not true. The shaded vertices constitute a minimum LTD-set.}\label{Fig_totalCounter}
\end{figure}
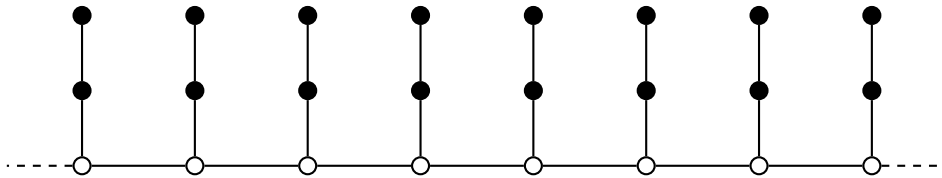

Coming back to our results, in Proposition~\ref{propSubcubic}, we show that a twin-free subcubic graph $G$ has $\LD(G)\leq \frac{n}{2}$. This answers positively to Problem~\ref{ProbSubcubic}. Then, we continue to Theorem~\ref{theclosedtwins} where we expand Proposition~\ref{propSubcubic} by allowing subcubic graphs to include closed twins. Finally, in Theorem~\ref{thm_deg3}, we expand the result also for subcubic graphs allowing open twins of degree~$3$ with the exceptions for exactly complete graph $K_4$ and complete bipartite graph $K_{3,3}$. This answers positively to Problem~\ref{ProbCubicTwins}. Observe that we actually give a result stronger than the Problems~\ref{ProbCubicTwins} and~\ref{ProbSubcubic} asked. We state that the $\frac{n}{2}$ bound holds for connected subcubic graphs without open twins of degrees 1 or 2 on at least four vertices with the exceptions of $K_4$ and $K_{3,3}$.

We also show in Proposition~\ref{PropOpentwinEx} that forbidding open twins of degrees~$1$ and~$2$ is necessary. Furthermore, in Proposition~\ref{PropRegEx}, we show that Problem~\ref{ProbCubicTwins} cannot be expanded for $r$-regular graphs, for $r\geq4$, with twins. We also give an infinite family of twin-free subcubic graphs for which the conjecture is tight in Proposition~\ref{PropTightSubcubic}. Note that Foucaud and Henning had asked in~\cite[Problem 1]{foucaud2016location} for characterizing each twin-free cubic graph which attain the $\frac{n}{2}$-bound. We have manually checked every twin-free 10-vertex cubic graph and found that there were  no tight examples. Furthermore, we are aware of only one 6-vertex and one 8-vertex twin-free cubic graph (presented in~\cite{foucaud2016location}) which attains this upper bound. From this perspective, the proof for subcubic graphs seems more challenging than the one for cubic graphs.

%%%%%%%%%%%%%%%%%%%%%%%%%%%%%%%%%

\subsection{Structure of the paper}

The paper is structured as follows: In Subsection~\ref{subsecLem}, we introduce some useful lemmas which are used throughout this paper. Then, in Section~\ref{secMain}, we prove Conjecture~\ref{conj_Garijo} for subcubic graphs allowing closed twins. In Section~\ref{SecOpentwins}, we prove the conjecture for subcubic twins allowing open twins of degree $3$. We continue in Section~\ref{SecExample}, by giving some constructions which show tightness of our results and show that they cannot be further generalized. Finally, we conclude in Section~\ref{SecConclusion}.

%%%%%%%%%%%%%%%%%%%%%%%%%%%%%%%%%%%

\subsection{Lemmas}\label{subsecLem}

Lemmas~\ref{LemLDSup} and~\ref{LemLDLeaves} have previously been considered for trees in~\cite{blidia2007locating} and~\cite{bousquet2023locating}. The proofs for these generalizations follow similarly as the original ones but we offer them for the sake of completeness.

\begin{lemma}\label{LemLDSup}
Let $G$ be a connected graph on at least three vertices. Then $G$ admits  an optimal locating-dominating set $S$ such that every support vertex of $G$ is in $S$. 
\end{lemma}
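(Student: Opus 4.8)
The plan is to prove the statement by an exchange argument: start from an arbitrary optimal locating-dominating set $S$ and show that whenever a support vertex $u$ fails to lie in $S$, we may modify $S$ into another optimal LD-set that contains $u$, without destroying the property for support vertices already swapped in. First I would fix a minimum LD-set $S$ and let $u$ be a support vertex not in $S$, with $\ell$ a leaf adjacent to $u$. Since $S$ dominates $\ell$ and $N[\ell]=\{u,\ell\}$, we must have $\ell\in S$ (this is the crucial local observation: the leaf's only neighbours are itself and its support vertex, so domination forces one of them into $S$, and since $u\notin S$ it must be $\ell$). I would then consider the swap $S'=(S\setminus\{\ell\})\cup\{u\}$ and argue that $S'$ is again an optimal locating-dominating set.

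The key steps, in order, are as follows. \textbf{(1)} Establish that $S'$ is dominating: every vertex dominated by $\ell$ in $S$ lies in $\{u,\ell\}$, and both are now dominated by $u\in S'$; every other vertex retains a dominator, so domination is preserved. \textbf{(2)} Verify the locating (separation) property for $S'$. The only vertex that leaves $S$ is $\ell$ and the only vertex that enters is $u$; so the vertices outside $S'$ are $(V\setminus S)\setminus\{u\}$ together with the newly-exposed vertex $\ell$. For $\ell$ I would note that $I_{S'}(\ell)=\{u\}$, and since $u$ is the unique support vertex of $\ell$ and $\ell$ is its only leaf-neighbour, no other vertex $v\notin S'$ can have $I_{S'}(v)=\{u\}$ unless $v$ is itself adjacent only to $u$ among $S'$; I would rule this out by examining the candidates. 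For the remaining vertices $v\in V\setminus S'$ with $v\neq\ell$, their $I$-sets change only if they were adjacent to $\ell$ or to $u$; since $\deg(\ell)=1$ its only neighbour is $u$, so the single delicate case is a vertex $w$ adjacent to $u$ whose $I$-set gains $u$, and I would check that this cannot create a collision.

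\textbf{The main obstacle} I anticipate is step (2), and specifically the possibility that introducing $u$ into the set creates two vertices outside $S'$ sharing the same $I$-set, or that $\ell$ collides with some other now-exposed vertex. The cleanest way to handle this is to observe that $\ell$ is \emph{separated from all other vertices} precisely because $\ell\in N[\ell]$ forces $I_{S'}(\ell)\ni u$ while $\ell\notin I_{S'}(v)$ for any $v\neq\ell$ (as $\ell$ has degree one), so $\ell$'s $I$-set is genuinely distinguished; and conversely any collision among two old vertices $v_1,v_2$ would have had to already occur in $S$ unless one of them is adjacent to $u$, reducing to finitely many local configurations around the edge $u\ell$. \textbf{(3)} Finally I would package this into an induction or a minimal-counterexample argument: order the support vertices and apply the swap repeatedly, choosing $S$ among optimal LD-sets to maximise the number of support vertices contained in it; if some support vertex is still missing, the swap above produces an optimal LD-set with strictly more support vertices, contradicting maximality. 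Care is needed to confirm the swap never ejects a support vertex that was previously added, which holds because the swap only removes leaves, never support vertices (a leaf cannot be a support vertex in a connected graph on at least three vertices). This last monotonicity point is what makes the repeated-swap argument terminate cleanly.
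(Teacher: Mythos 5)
Your proposal is correct and follows essentially the same route as the paper: an exchange $S'=(S\setminus\{\ell\})\cup\{u\}$ combined with choosing $S$ to maximise the number of support vertices it contains. The one verification you leave as "examine the candidates" is resolved exactly as the paper does it: any neighbour $v\neq\ell$ of $u$ satisfies $(N[v]\setminus\{u,\ell\})\cap S\neq\emptyset$ because $S$ dominates $v$, $u\notin S$, and $\ell\notin N[v]$, so $\ell$ is the unique vertex outside $S'$ with $I$-set $\{u\}$.
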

\begin{proof}
Suppose on the contrary that there exists a graph $G$ such that no optimal locating-dominating set contains every support vertex of $G$. Furthermore, let $S$ be an optimal locating-dominating set of $G$ such that it contains the largest number of support vertices among all optimal locating-dominating sets of $G$. Furthermore, let $s$ be a support vertex of $G$ not in $S$ and let $u_1, u_2, \ldots , u_k$ be all the leaves adjacent to $s$. Since $S$ is a dominating set, we have $u_i \in S$ for all $i \in [k]$. We show that, contrary to the maximality of $S$, the set $S'=(S\setminus\{u_1\})\cup\{s\}$ is a locating-dominating set of $G$. Indeed, the set $S\setminus\{u_1\}$ separates all pairs of vertices in $V(G)\setminus (S\cup\{s,u_1\})$. Furthermore, since $S$ is a dominating set of $G$ and $s \notin S$, any neighbour $v$ other than $u_1$ of $s$ has $(N_G[v] \setminus \{s\}) \cap S \ne \emptyset$. This implies that the vertex $u_1$ is the only vertex in $V(G) \setminus S'$ with $I$-set $\{s\}$ in $S'$. Therefore, set $S'$ is locating-dominating, a contradiction. 
\end{proof}

\begin{lemma}\label{LemLDLeaves}
Let $G$ be a connected graph on at least three vertices without open twins of degree~$1$. Then $G$ admits  an optimal locating-dominating set $S$ such that there are no leaves of $G$ in $S$. 
\end{lemma}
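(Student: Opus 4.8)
The plan is to prove this by a modification argument analogous to Lemma~\ref{LemLDSup}: starting from an optimal locating-dominating set $S$ that contains every support vertex (which exists by Lemma~\ref{LemLDSup}), I would show that one can swap out any leaf lying in $S$ without increasing the size and without destroying the locating-dominating property. Concretely, let $u$ be a leaf in $S$ and let $s$ be its (unique) support vertex. By Lemma~\ref{LemLDSup} we may assume $s\in S$ already. The natural move is to try to replace $u$ by some vertex that restores separation, or simply to delete $u$ if it is not needed. I would first argue that, since $s\in S$, the leaf $u$ is dominated regardless of whether $u\in S$, so domination of $u$ itself is never the issue.

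The key case analysis concerns why $u$ might be \emph{needed} in $S$. Because $N[u]=\{u,s\}$, the only vertices whose $I$-set could involve $u$ are $u$ and $s$ themselves; no other vertex sees $u$. Hence removing $u$ from $S$ can only cause a separation failure between $s$ (now possibly outside $S$) and some other vertex, or fail to separate $u$ from another out-vertex. Since $s\in S$, after removing $u$ the vertex $s$ stays in $S$, so $s$ needs no $I$-set to be distinguished; the only remaining danger is that $u$, now outside $S$, collides with another leaf $u'$ attached to the same support $s$. But two leaves sharing the same support vertex are open twins of degree~$1$, which the hypothesis forbids. Therefore $s$ has at most one leaf neighbour, and after deleting $u$ the vertex $u$ becomes the unique out-vertex with $I$-set $\{s\}$, so separation is preserved. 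This shows $S\setminus\{u\}$ is already locating-dominating, so $S$ was not optimal unless it contained no leaves to begin with; iterating (or choosing $S$ minimizing the number of leaves) yields an optimal set with no leaves.

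I would phrase this cleanly by choosing, among all optimal locating-dominating sets containing every support vertex, one that contains the \emph{fewest} leaves, and then deriving a contradiction from the existence of a leaf $u\in S$ by exhibiting $S\setminus\{u\}$ as a strictly smaller (hence the real obstacle is only separation, handled above) locating-dominating set, or at least one with the same size but fewer leaves after a suitable replacement. Care is needed on the boundary: one must check the hypothesis ``at least three vertices'' rules out the degenerate graph $K_2$ (two mutual leaves), and that forbidding open twins of degree~$1$ is exactly what prevents two leaves from clinging to a common support, which is the crux making the argument go through.

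The main obstacle I anticipate is the bookkeeping of separation when $u$ is removed: one must verify not only that $u$'s own $I$-set remains unique but that no \emph{pair} of previously separated out-vertices becomes unseparated. Since the only change to any $I$-set upon deleting $u$ is the possible loss of $u$ from $I(u)$ and $I(s)$, and $s\in S$ is no longer an out-vertex requiring separation, this reduces entirely to ruling out a second degree-$1$ open twin of $u$ at $s$ — precisely the forbidden configuration. Getting this reduction stated correctly, and confirming that $s$ genuinely remains dominated and separated after the swap, is where the delicate part lies.
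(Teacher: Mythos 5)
Your setup --- take an optimal locating-dominating set containing every support vertex and, among those, one with the fewest leaves --- matches the paper's, but the core step fails. When you delete the leaf $u \in S$ with support $s \in S$, the new out-vertex $u$ gets $I(u)=\{s\}$, and the danger is a collision with \emph{any} vertex $v \notin S$ whose only neighbour in $S$ is $s$, not only with another leaf attached to $s$. Such a $v$ can be a vertex of larger degree all of whose other neighbours happen to lie outside $S$. Forbidding open twins of degree~$1$ rules out $v$ being a leaf; it does not rule out $v$ existing. So your conclusion that $S\setminus\{u\}$ is already locating-dominating does not follow, and it is in fact false. Concretely, let $G$ be the $5$-cycle $s,x,x',y',y$ with a pendant leaf $u$ attached to $s$. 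This graph is connected, has no open twins of degree~$1$, and $S=\{s,u,x'\}$ is an \emph{optimal} LD-set (one checks $\LD(G)=3$) containing the unique support vertex $s$ and the leaf $u$; yet $S\setminus\{u\}=\{s,x'\}$ fails to separate $u$ from the non-leaf $y$, since both receive the $I$-set $\{s\}$. Your argument, if valid, would prove that no optimal LD-set containing all support vertices can contain a leaf, which this example refutes.

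The missing idea is the swap that the degree-$1$ twin-freeness hypothesis is actually there to enable. If $S\setminus\{u\}$ is not locating-dominating, then, since only $I(u)$ and $I(s)$ change and $s$ stays in $S$, there is a (unique, because $S$ is an LD-set) vertex $v\notin S$ with $I(v)=\{s\}$. The hypothesis says precisely that this $v$ is not a leaf, so $S'=(S\setminus\{u\})\cup\{v\}$ is an optimal locating-dominating set that still contains every support vertex but has strictly fewer leaves, contradicting the choice of $S$. (In the example above, $v=y$ and $S'=\{s,x',y\}$ works.) You gesture at ``a suitable replacement'' in passing, but the argument you actually give asserts that no replacement is needed; that assertion is the gap.
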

\begin{proof}
Suppose on the contrary that there exists a graph $G$ without open twins of degree~$1$ such that every optimal locating-dominating set contains a leaf of $G$. Consider optimal locating-dominating set $S$ such that it contains the least number of leaves among all optimal locating-dominating sets of $G$ which contains every support vertex of $G$. Notice that $S$ exists by Lemma~\ref{LemLDSup}. Hence, there exist adjacent vertices $s,u\in S$ such that $s$ is a support vertex and $u$ is a leaf. Since $S$ is optimal, the set $S\setminus\{u\}$ is not locating-dominating. Therefore, by the optimality of $S$, there exists a unique vertex $v\not\in S$ such that $I(v)=\{s\}$. Since $G$ has no twins of degree~$1$, the vertex $v$ is not a leaf. However, now the set $S'=(S\cup\{v\})\setminus\{u\}$ is a locating-dominating set of $G$. Since $S'$ is optimal, contains all support vertices and fewer leaves of $G$ than $S$, it contradicts the minimality of $S$. Therefore, the claim follows.
\end{proof}

\begin{lemma}\label{LemC4neighb}
Let $G$ be a subcubic twin-free and triangle-free graph. If vertices $u$ and $v$ are in the same four cycle $C_4$, then all of their common neighbours are in the same cycle $C_4$.
\end{lemma}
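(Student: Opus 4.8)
The plan is to split into two cases according to how $u$ and $v$ sit on the prescribed $4$-cycle: either they are adjacent on it, or they are its two diagonally opposite vertices. In the diagonal case I would name the cycle $u,a,v,b$ with edges $ua,av,vb,bu$, so that $a$ and $b$ are common neighbours of $u$ and $v$ that automatically lie on the cycle; in the adjacent case I would name it $u,v,x,y$. Each case should then reduce to a one-line application of exactly one of the three hypotheses (triangle-free, subcubic, twin-free).

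First I would dispose of the adjacent case. If $u$ and $v$ are adjacent on the $4$-cycle and $w$ is \emph{any} common neighbour of $u$ and $v$, then $u,v,w$ would induce a triangle, contradicting triangle-freeness. Hence adjacent vertices of $G$ have no common neighbour at all, and the conclusion holds vacuously here.

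The substantive case is when $u$ and $v$ are the opposite vertices of the cycle $u,a,v,b$. Here $a$ and $b$ are common neighbours lying on the cycle, and I must rule out any further one. Suppose $c$ were a common neighbour of $u$ and $v$ with $c\notin\{a,b\}$. Then $u$ is adjacent to $a,b,c$ and $v$ is adjacent to $a,b,c$; since $G$ is subcubic, $\deg_G(u)\le 3$ and $\deg_G(v)\le 3$, forcing $N(u)=\{a,b,c\}=N(v)$. Thus $u$ and $v$ would be open twins, contradicting that $G$ is twin-free. Consequently every common neighbour of $u$ and $v$ lies in $\{a,b\}$, both of which are vertices of the $4$-cycle, which is exactly the claim.

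The argument is short and there is no genuinely hard step; the only thing to get right is the case split and the observation that, in the diagonal case, the three neighbours $a,b,c$ simultaneously saturate the degree bound of both $u$ and $v$, which is precisely what manufactures the forbidden open twins. Note also that the subcubic bound makes ``at most one extra common neighbour'' automatic, so no separate work is needed to exclude several extraneous common neighbours at once.
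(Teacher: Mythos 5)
Your proof is correct and follows essentially the same route as the paper's: the adjacent case is dispatched by triangle-freeness, and the diagonal case uses the subcubic degree bound to force $N(u)=N(v)$, contradicting twin-freeness. No issues.
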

\begin{proof}
Assume first that $u$ and $v$ are adjacent. If both of them are adjacent to $w$, then vertices $u,v,w$ form a triangle, a contradiction. Hence, we assume that there is a four-cycle $u,w,v,z$ with edges $uw,wv,vz,zu$. Observe that if a vertex $b\not\in \{u,w,v,z\}$ is adjacent to both $u$ and $v$, then $N(u)=N(v)=\{w,z,b\}$ since we consider a subcubic graph. This is a contradiction with $G$ being twin-free. Hence, the claim follows.
\end{proof}

\begin{lemma}\label{LemTriEdge}
Let $G$ be a subcubic twin-free graph. No two triangles in $G$ share a common edge.
\end{lemma}
\begin{proof}
Let vertices $u,v,w$ form a triangle in $G$. If this triangle shares an edge with another triangle, then without loss of generality two of the vertices, say, $v$ and $w$ have a common neighbour $z$. Hence, we have $N[v]=N[w]=\{u,v,w,z\}$. This is a contradiction with $G$ being a twin-free. Thus, the claim follows.
\end{proof}

With Lemmas~\ref{LemC4neighb} and~\ref{LemTriEdge}, we obtain following corollary.
\begin{corollary}\label{CorIndC4}
Let $G$ be a subcubic twin-free graph. If vertices $v_1,v_2,v_3,v_4$ form a four cycle $C_4$, then that four cycle is an induced subgraph of $G$.
\end{corollary}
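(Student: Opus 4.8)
The plan is to prove Corollary~\ref{CorIndC4} by showing that the four-cycle $v_1 v_2 v_3 v_4$ contains no ``extra'' edges, i.e.\ that neither of the two diagonals $v_1 v_3$ nor $v_2 v_4$ is present in $G$. Once both diagonals are ruled out, the subgraph induced on $\{v_1,v_2,v_3,v_4\}$ consists of exactly the four cycle-edges and nothing more, which is precisely the statement that the $C_4$ is an induced subgraph.

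First I would argue that the presence of a diagonal forces a triangle, and then invoke Lemma~\ref{LemTriEdge}. Suppose, towards a contradiction, that the chord $v_1 v_3$ is an edge of $G$. Then together with the two cycle-edges $v_1 v_2$ and $v_2 v_3$, the chord creates a triangle on $\{v_1, v_2, v_3\}$; symmetrically, the edges $v_1 v_4$, $v_3 v_4$ and the chord create a second triangle on $\{v_1, v_3, v_4\}$. These two triangles share the common edge $v_1 v_3$, which directly contradicts Lemma~\ref{LemTriEdge}, since $G$ is subcubic and twin-free. The same reasoning applies verbatim to the other diagonal $v_2 v_4$. Hence neither diagonal can be present, and the four specified vertices span only the four prescribed edges.

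The main (and essentially only) subtlety to be careful about is whether one even needs to consider edges to vertices \emph{outside} $\{v_1,\dots,v_4\}$. Being an induced subgraph is a statement purely about the edges among the four chosen vertices, so vertices external to the cycle are irrelevant here; the task reduces entirely to excluding the two chords, and I would state this explicitly to avoid any confusion. I would also note the mild subtlety that the corollary is credited to the combination of Lemmas~\ref{LemC4neighb} and~\ref{LemTriEdge}, yet the cleanest diagonal-killing argument only seems to require Lemma~\ref{LemTriEdge}; Lemma~\ref{LemC4neighb} is what guarantees the common-neighbour structure used elsewhere, but for the bare induced-subgraph claim the triangle-edge lemma alone suffices. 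I expect no genuine obstacle: this is a short structural consequence, and the only thing to get right is the clean identification of the two overlapping triangles sharing the diagonal edge.
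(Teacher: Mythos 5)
Your proof is correct and matches the paper's (implicit) argument: the paper gives no written proof, simply asserting the corollary follows from Lemmas~\ref{LemC4neighb} and~\ref{LemTriEdge}, and your chord-kills-itself-via-two-triangles-sharing-an-edge argument is exactly the natural derivation. Your side remark is also accurate — Lemma~\ref{LemTriEdge} alone suffices (a chord on a $C_4$ in a subcubic graph even makes its endpoints closed twins directly), so the citation of Lemma~\ref{LemC4neighb} is not actually needed for this statement.
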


\section{Subcubic graphs}\label{secMain}

\begin{proposition}\label{propSubcubic}
Let $G$ be a twin-free subcubic graph on $n$ vertices without isolated vertices. We have $$\LD(G)\leq \frac{n}{2}.$$
\end{proposition}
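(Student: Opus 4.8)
The plan is to argue by strong induction on $n$, using Theorem~\ref{The4cycle} as the base engine and peeling off $4$-cycles as the only genuinely new work. If $G$ is disconnected, each component is itself twin-free (any pair of twins lies inside a single component) and has no isolated vertex, so it suffices to prove the bound component-wise and sum; hence I may assume $G$ is connected. If $G$ has no $4$-cycle, then Theorem~\ref{The4cycle} gives $\LD(G)\le n/2$ immediately. So the whole difficulty is concentrated in the case where $G$ contains a $4$-cycle.

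Suppose then that $G$ has a $4$-cycle $C=v_1v_2v_3v_4$, which by Corollary~\ref{CorIndC4} is induced. Since $G$ is subcubic, each $v_i$ has at most one neighbour outside $C$; call these external neighbours $a,b,c,d$ (those of $v_1,v_2,v_3,v_4$, some possibly absent). Twin-freeness already constrains the picture in a way I intend to exploit: because $N(v_1)\cap C=N(v_3)\cap C=\{v_2,v_4\}$, the vertices $v_1,v_3$ would be open twins unless their external neighbours differ, so $a$ and $c$ can neither coincide nor both be absent, and symmetrically for $b,d$; moreover no external vertex can be adjacent to both $v_1$ and $v_3$ (that would force $a=c$). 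The target reduction is to delete all four vertices of $C$, apply the induction hypothesis to $G'=G-V(C)$ on $n-4$ vertices, and rebuild a locating-dominating set of $G$ by adjoining just two cycle vertices, yielding $\LD(G)\le (n-4)/2+2=n/2$.

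Two steps carry the argument, and both require care. For the lifting step, the natural choice is $S=S'\cup\{v_1,v_2\}$ for an optimal LD-set $S'$ of $G'$: then $v_3$ is dominated through $v_2$ and $v_4$ through $v_1$, and because $C$ is induced, $v_3$ sees $v_2$ but not $v_1$ while $v_4$ sees $v_1$ but not $v_2$, so $v_3$ and $v_4$ are automatically separated from each other; no vertex of $G'\setminus S'$ alters its $I$-set except $a,b$, which only gain $v_1$ or $v_2$ and so stay separated from everything. The delicate point is a possible collision between a former cycle vertex and an external neighbour — for instance $I_G(v_3)=\{v_2\}\cup(\{c\}\cap S')$ could coincide with $I_G(b)$. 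Resolving such clashes is the core case analysis: one adds either an adjacent pair such as $\{v_1,v_2\}$ or an antipodal pair such as $\{v_1,v_3\}$, whose failure modes are complementary, or else enlarges the deleted region to swallow an offending low-degree external neighbour so that the count still yields at most one set-vertex added per two vertices deleted. The analysis is organised according to which of $a,b,c,d$ exist, whether any coincide, and whether any are mutually adjacent.

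The step I expect to be the main obstacle is verifying that $G'$ still satisfies the induction hypothesis. Deleting $V(C)$ can isolate a pendant external vertex or create a fresh pair of twins among $a,b,c,d$, so $G'$ need not be a legitimate instance. These degenerate configurations must be handled directly: the offending external vertices then have small degree, so Lemmas~\ref{LemLDSup} and~\ref{LemLDLeaves} let me assume support vertices lie in the set and leaves do not, and the finitely many boundary graphs (a $4$-cycle carrying a bounded amount of pendant or path structure) are checked by hand, where the bound turns out to be tight. Packaging everything as a clean induction — choosing the $4$-cycle and the added pair so that what remains is always a strictly smaller admissible instance — is the crux.
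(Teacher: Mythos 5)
Your high-level plan --- minimal counterexample/induction, invoke Theorem~\ref{The4cycle} when there is no $4$-cycle, and otherwise perform local surgery around an induced $4$-cycle while using Lemmas~\ref{LemLDSup} and~\ref{LemLDLeaves} to control support vertices and leaves --- is indeed the skeleton of the paper's argument for its triangle-free case. But what you have written defers precisely the part that constitutes the proof. The paper's analysis of the $4$-cycle alone runs through roughly a dozen sub-cases, and in each the surgery is \emph{different}: sometimes two degree-$2$ cycle vertices are deleted, sometimes one or two edges of the cycle, sometimes a support vertex together with its leaf, and the vertices added back (and occasionally a local modification of the set returned by induction) are chosen case by case. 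Your single committed reduction ``delete $V(C)$ and add back two cycle vertices'' does not survive contact with these configurations. For instance, if $v_1$ has degree $2$ and the external neighbour $c$ of $v_3$ is a leaf, then $G-V(C)$ has an isolated vertex; enlarging the deleted set to include $c$ forces $v_3$ into the rebuilt set for domination, after which separating the remaining cycle vertices from one another and from $b,d$ with only one further added vertex is not automatic and in general requires yet another surgery. Declaring that such clashes are ``resolved by the core case analysis'' names the problem rather than solving it.

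Two of your specific structural claims are also too optimistic. First, the new twins created in $G-V(C)$ need not lie ``among $a,b,c,d$'': an external neighbour $a$ of degree $3$ becomes an open twin with any vertex $w$ satisfying $N_G(w)=N_G(a)\setminus\{v_1\}$, and such a $w$ sits at distance $3$ from the cycle. Hence the degenerate instances are not ``finitely many boundary graphs checked by hand'' but infinitely many graphs containing certain local patterns, each needing its own tailored inductive reduction --- this is exactly what the paper's Cases 1.1.2, 1.2.3.2, 1.2.3.3 and 1.3.3.2 are occupied with. Second, by not splitting off triangles first (the paper's Case~2, whose surgery is the removal of a single triangle edge and has a quite different flavour), you permit adjacent cycle vertices to share their external neighbour ($a=b$ yielding a triangle on $v_1v_2a$), a coincidence your list of twin-freeness constraints does not exclude and which breaks the ``induced $C_4$ with four distinct pendant directions'' picture your lifting step relies on; the paper's repeated appeals to triangle-freeness inside its $4$-cycle analysis (e.g., to rule out $c$ and $d$ being twins) would all have to be replaced. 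In short, the approach is the right one, but the proof \emph{is} the case analysis, and it is missing.
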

\begin{proof}
Let $G$ be a twin-free subcubic graph on $n$ vertices without isolated vertices. Hence, $n\geq4$. If $n=4$, then $G$ is a path $P_4$ on four vertices. We have $\LD(P_4)=2$. Thus, the claim holds for all subcubic twin-free graphs without isolated vertices on four vertices. Let us next assume on the contrary that $G$ is a graph with the smallest number of vertices and among those graphs one with the smallest number of edges for which the claimed upper bound does not hold. Notice that $G$ is connected. Indeed, if there are multiple components, then the claimed upper bound does not hold on at least one of them and we could have chosen $G$ as that component. However, this contradicts the minimality of $G$.

We first divide the proof based on whether $G$ has triangles. 

\begin{case}[$G$ is triangle-free]

By Theorem~\ref{The4cycle}, we may assume that $G$ contains a four cycle. Let us call the vertices in this four cycle by $a,b,c$ and $d$ so that there are edges $ab$, $bc$, $cd$ and $da$. By Corollary~\ref{CorIndC4} there are no edges $ac$ nor $bd$. Furthermore, by Lemma~\ref{LemC4neighb}, the only common neighbours of vertices $a,b,c$ and $d$ are in the set $\{a,b,c,d\}$. Since $G$ is open-twin-free, we may immediately observe that there are at most two vertices of degree~$2$ in a four cycle in $G$. \smallskip

\begin{subcase}[There are exactly two vertices of degree~$2$ in the four cycle]

Observe that when there are two vertices of degree~$2$ in the four cycle, they are adjacent. Let us call these vertices, without loss of generality, $a$ and $b$ and denote $G_{a,b}=G-a-b$. Notice that $G_{a,b}$ is connected and subcubic. We further call the other neighbour of $b$ as $c$ and the remaining vertex of this four cycle as $d$. Let us next divide our considerations based on whether $G_{a,b}$ is twin-free.

\begin{subsubcase}[$G_{a,b}$ is twin-free]

Let $S_{a,b}$ be an optimal locating-dominating set in $G_{a,b}$. By the minimality of $G$, we have $|S_{a,b}|\leq \frac{n}{2}-1$. Assume first that $c$ or $d$ is in $S_{a,b}$. We may further assume, without loss of generality that $d\in S_{a,b}$. Consider next $S=S_{a,b}\cup \{a\}$ in $G$. We have $a \in I_G(S,b)$ and $b$ is the only vertex in $V(G)\setminus S$ adjacent to $a$. Thus, $I_G(S,b)$ is unique in $V(G)\setminus S$. Furthermore, each other vertex in $V(G)\setminus S$ is dominated and pairwise separated from other vertices by the same vertices in $S_{a,b}$ as in $G_{a,b}$. Thus, we may assume that neither of $c$ nor $d$ is in $S_{a,b}$. 

Since $S_{a,b}$ is a locating-dominating set, vertices $c$ and $d$ are dominated by some other vertices in  $S_{a,b}$ which are not adjacent to $a$ or $b$ in $G$. Hence, we may again consider set $S=S_{a,b}\cup \{a\}$ in $G$. Again, $b$ is the only vertex with $I_G(S,b)=\{a\}$ while all other vertices in $V(G)\setminus S$ are pairwise separated and dominated by the same vertices in $S_{a,b}$ as in $G_{a,b}$. Thus, when a four cycle contains two vertices of degree~$2$ in $G$ and $G_{a,b}$ is twin-free, we have $\LD(G)\leq \frac{n}{2}$. \hfill $\blacktriangleleft \blacktriangleleft$
\end{subsubcase}

\begin{subsubcase}[$G_{a,b}$ contains twins]
 
Notice that since $G$ is twin-free, at least one of the twins is $c$ or $d$. First of all, if $c$ and $d$ are twins, then either $G$ is a cycle on four vertices or it contains a triangle. This contradicts the twin- or triangle-freeness of $G$. Furthermore, if both vertices $c$ and $d$ are twins with $c'$ and $d'$, respectively, then $c'$ and $d'$ have degree~$2$ and $d'$ is adjacent to $c$, and $c'$ is adjacent to $d$. Thus, $G$ contains exactly six vertices and the set $S=\{a,d,c'\}$ is a locating-dominating set of $G$ containing exactly half of the vertices in $G$.

Let us assume next, without loss of generality, that exactly $c$ is a twin with vertex $c'$. Let us denote the other neighbour of $c$ with $e$ and the third neighbour of $e$ by $f$ (see Figure~\ref{fig_case112}). We have $N(c')=\{d,e\}$ and $N_{G}(e)=\{c,c',f\}$. Notice that the degree of $e$ is exactly~$3$ since $G$ is subcubic and $e$ is not a twin of $d$. Assume first that $f$ is a leaf. In this case, $G$ contains exactly seven vertices and the set $S=\{b,d,e\}$ is a locating-dominating set containing less than half of the vertices in $G$. Thus, we may assume that $f$ is not a leaf. Consider next the graph $G'=G_{a,b}-c-d=G-a-b-c-d$. We notice that $e$ is a support vertex and $c'$ is a leaf in this graph. Moreover, since $f$ is not a leaf and since $G$ is twin-free, the graph $G'$ is twin-free. By Lemmas~\ref{LemLDLeaves} and~\ref{LemLDSup}, we may assume that $S'$ is an optimal locating-dominating set of $G'$ such that it contains all support vertices but no leaves in $G'$ (in particular, $e \in S'$ and $c' \notin S'$). 
Moreover, by the minimality of $G$, we have $|S'|\leq \frac{n}{2}-2$. Consider next the set $S=S'\cup\{b,d\}$. We have $I(a)=\{d,b\}$, $I(c)=\{b,d,e\}$ and $I(c')=\{d,e\}$. Moreover, all other vertices in $V(G)\setminus S$ are dominated and pairwise separated by the same vertices in $S'$ as in $G$. Hence, $S$ is locating-dominating in $G$ with the claimed cardinality. 
\hfill \mbox{$\blacktriangleleft \blacktriangleleft$}
\end{subsubcase}

Thus, if $G$ contains two vertices of degree~$2$ in the same four cycle, then the result holds.\hfill $\blacktriangleleft$ 
\end{subcase}

%%%%%%%%%%%%%%%%%%%%%%%%%%%%%%%

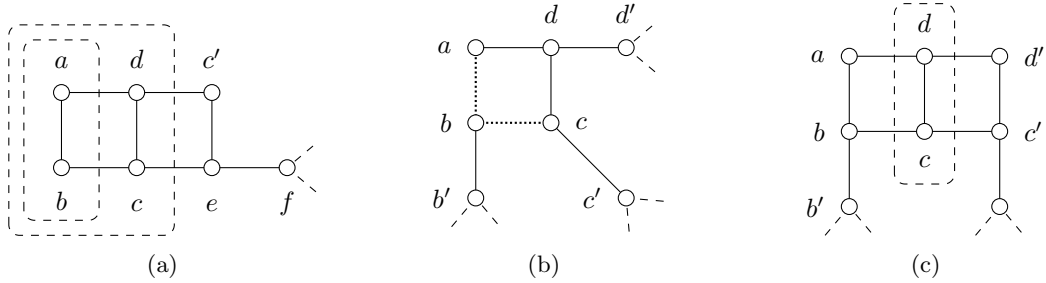
\begin{figure}
	\centering
	\begin{subfigure}[b]{0.32\linewidth}
		\centering
		\begin{tikzpicture}[label distance=2mm]
			\coordinate[label=above:$a$] (a) at (0,1);
			\coordinate[label=below:$b$] (b) at (0,0);
			\coordinate[label={[label distance=3mm]below:$c$}] (c) at (1,0);
			\coordinate[label=above:$d$] (d) at (1,1);
			\coordinate[label=above:$c'$] (c') at (2,1);
			\coordinate[label={[label distance=3mm]below:$e$}] (e) at (2,0);
			\coordinate[label=below:$f$] (f) at (3,0);
			%----EDGES-----------
			\draw (f) -- (e) -- (c) -- (b) -- (a) -- (d) -- (c') -- (e)   (d) -- (c);
			\draw[dashed] ($(f) + (320:.5)$) -- (f) -- ($(f) + (40:.5)$);
			%----VERTICES--------
			\draw \foreach \x in {(a),(b),(c),(d),(c'),(e),(f)}{
				\x node[circle, draw, fill=white, inner sep=0pt, minimum width=6pt] {}
			};
			%----OTHER-----------
			\draw[dashed,rounded corners] ($(b) - (.5,.7)$) rectangle ($(a) + (.5,.7)$) ;
			\draw[dashed,rounded corners] ($(b) - (.7,.9)$) rectangle ($(d) + (.5,.9)$) ;
		\end{tikzpicture}
		\caption{ }\label{fig_case112}
	\end{subfigure}
	\hfill
	\begin{subfigure}[b]{0.32\linewidth}
		\centering
		\begin{tikzpicture}[label distance=2mm]
			\coordinate[label={left:$a$}] (a) at (0,1);
			\coordinate[label=left:$b$] (b) at (0,0);
			\coordinate[label=right:$c$] (c) at (1,0);
			\coordinate[label=above:$d$] (d) at (1,1);
			\coordinate[label=left:$b'$] (b') at (0,-1);
			\coordinate[label=left:$c'$] (c') at (2,-1);
			\coordinate[label=above:$d'$] (d') at (2,1);
			%------EDGES---------------------
			\draw (a) -- (d) -- (c);
			\draw[thick, densely dotted] (a) -- (b) -- (c);
			\draw (b) -- (b')  (c) -- (c')  (d) -- (d');
			\draw[dashed] ($(b') + (230:.5)$) -- (b') -- ($(b') + (310:.5)$);
			\draw[dashed] ($(c') + (355:.5)$) -- (c') -- ($(c') + (275:.5)$);
			\draw[dashed] ($(d') + (320:.5)$) -- (d') -- ($(d') + (40:.5)$);
			%------VERTICES------------------
			\draw \foreach \x in {(a),(b),(c),(d),(b'),(c'),(d')}{
				\x node[circle, draw, fill=white, inner sep=0pt, minimum width=6pt] {}
			};
		\end{tikzpicture}
	\caption{ }\label{fig_case123}
	\end{subfigure}
	\hfill
	\begin{subfigure}[b]{0.32\linewidth}
		\centering
		\begin{tikzpicture}[label distance=2mm]
			\coordinate[label={left:$a$}] (a) at (0,1);
			\coordinate[label=left:$b$] (b) at (0,0);
			\coordinate[label=below:$c$] (c) at (1,0);
			\coordinate[label=above:$d$] (d) at (1,1);
			\coordinate[label=left:$b'$] (b') at (0,-1);
			\coordinate[label=right:$c'$] (c') at (2,0);
			\coordinate[label=right:$d'$] (d') at (2,1);
			\coordinate (e) at (2,-1);
			%------EDGES---------------------
			\draw (a) -- (d) -- (c) -- (b) -- (a);
			\draw (b) -- (b')  (c) -- (c') -- (d') -- (d)  (c') -- (e);
			\draw[dashed] ($(b') + (230:.5)$) -- (b') -- ($(b') + (310:.5)$);
			\draw[dashed] ($(e) + (230:.5)$) -- (e) -- ($(e) + (310:.5)$);
			%------VERTICES------------------
			\draw \foreach \x in {(a),(b),(c),(d),(b'),(c'),(d'),(e)}{
				\x node[circle, draw, fill=white, inner sep=0pt, minimum width=6pt] {}
			};
			%------OTHER---------------------
			\draw[rounded corners,dashed] ($(c) + (-0.4,-0.7)$) rectangle ($(d) + (0.4,0.7)$);
		\end{tikzpicture}
		\caption{ }\label{fig_case1232}
	\end{subfigure}
	\caption{Illustrations for (a) Case 1.1.2, (b) Case 1.2.3 and (c) Case 1.2.3.2. The dotted lines indicate edges being removed. Dashed edges are edges, that may or may not exist. Dashed outlines indicate vertices being removed.}
\end{figure}

%%%%%%%%%%%%%%%%%%%%%%%%%%%%%%%

From now on we may assume that there is at most one vertex of degree~$2$ in a four cycle in $G$.

\begin{subcase}[There is exactly one vertex of degree~$2$ in the four cycle]

Let us say, without loss of generality, that the vertex of degree~$2$ is $a$. Let us denote the third neighbour outside of the set $\{a,b,c,d\}$ of $b$ by $b'$, of $c$ by $c'$ and of $d$ by $d'$ (see Figure~\ref{fig_case123}). 

\begin{subsubcase}[Both $b'$ and $d'$ are leaves]

Let us denote $G'=G-a-d-d'$. Observe that $G'$ is twin-free since $G$ is twin-free, $b$ is a support vertex and $c$ is the only non-leaf adjacent to $b$. Let $S'$ be an optimal locating-dominating set in $G'$ which contains $b$ but does not contain $b'$
(such a set exists by Lemmas~\ref{LemLDSup} and~\ref{LemLDLeaves}). 
By the minimality of $G$, we have $|S'|\leq \frac{n}{2}-1$. Notice that to separate $b'$ and $c$ we have $\{c,c'\}\cap S'\neq \emptyset$. Hence, the set $S=S'\cup\{d\}$ is a locating-dominating set in $G$. Indeed, we have $I(d')=\{d\}$, $I(a)=\{d,b\}$, $I(b')=\{b\}$ and $|I(c)|\geq3$ (if $c \notin S$). Moreover, we have $|S|\leq \frac{n}{2}$.
\hfill $\blacktriangleleft \blacktriangleleft$
\end{subsubcase}

\begin{subsubcase}[Exactly one of $b'$ and $d'$ is a leaf]

Let us assume, without loss of generality, that $b'$ is a leaf while $d'$ is a non-leaf. In this case, we consider the graph $G_{ab,cd}=G-ab-cd$. Notice that this graph is twin-free since $G$ is twin-free, $d'$ is a non-leaf and $c$ is the only non-leaf adjacent to $b$ while $b'$ is the only leaf adjacent to $b$. Furthermore, let $S_{ab,cd}$ be an optimal locating-dominating set in $G_{ab,cd}$ such that it does not contain any leaves and contains all support vertices. It exists by Lemmas~\ref{LemLDSup} and~\ref{LemLDLeaves}. Moreover, by the minimality of $G$ we have $|S_{ab,cd}|\leq \frac{n}{2}$. In particular, we have $b,d\in S_{ab,cd}$. Furthermore, since $I_{G_{ab,cd}}(S_{ab,cd},c)\neq I_{G_{ab,cd}}(S_{ab,cd},b')$, we have $c\in S_{ab,cd}$ or  $c'\in S_{ab,cd}$. Let us next consider set $S_{ab,cd}$ in $G$. Notice that $I_G(S_{ab,cd},a)=\{b,d\}$, $I_G(S_{ab,cd},b')=\{b\}$, $|I_G(S_{ab,cd},c)|\geq3$ (if $c \notin S_{ab,cd}$) and $b$ separates the vertices $d'$ and $c$. Thus, $S_{ab,cd}$ is a locating-dominating set of claimed cardinality in $G$. \hfill \mbox{$\blacktriangleleft \blacktriangleleft$}
\end{subsubcase}

From now on we assume that when a four cycle has exactly one degree~$2$ vertex, neither neighbour of the degree two vertex is a support vertex.

\begin{subsubcase}[Neither $b'$ nor $d'$ is a leaf]

Let us next consider graph $G_{ab,bc}=G-ab-bc$ (see Figure~\ref{fig_case123}). We further divide the proof based on three possibilities: Either $G_{ab,bc}$ is twin-free, or vertex $b$ is a twin with some other vertex or vertex $c$ is a twin with some other vertex. There cannot exist any other twins in $G_{ab,bc}$. Indeed, $G$ is twin-free, $d$ is the support vertex adjacent to $a$ and the leaf $a$ is not a twin since $d'$ is not a leaf. 

\begin{subsubsubcase}[$G_{ab,bc}$ is twin-free]

Let $S_{ab,bc}$ be an optimal locating-dominating set which does not contain any leaves and contains all the support vertices in $G_{ab,bc}$. The set $S_{ab,bc}$ exists by Lemmas~\ref{LemLDLeaves} and~\ref{LemLDSup}. Furthermore, by the minimality of $G$ it has cardinality of at most $\frac{n}{2}$. Observe that $d\in S_{ab,bc}$ and $b'\in S_{ab,bc}$. Observe further that if $c\in S_{ab,bc}$ and $c'\not\in S_{ab,bc}$, then $S_{ab,bc}'=(S_{ab,bc}\setminus\{c\})\cup\{c'\}$ is also a locating-dominating set in $G_{ab,bc}$. Indeed, $c$ is the only vertex with $I(S_{ab,bc}',c)=\{d,c'\}$ since $I(a)=\{d\}$ and $|I(S_{ab,bc},d')|\geq2$ so if $c'\in N(d')$, then $|I(S_{ab,bc}',d')|\geq3$ and $d'$ is separated from all other vertices.

We claim that $S_{ab,bc}$ or $S_{ab,bc}'$ is a locating-dominating set of claimed cardinality in $G$. Let us first consider the set $S_{ab,bc}$. Since $I_G(S_{ab,bc},a)=\{d\}$, the only $I$-set which might change when we consider $G$ instead of $G_{ab,cd}$ is $I(b)$. We have $I_G(S_{ab,bc},b)=\{b'\}$ or $I_G(S_{ab,bc},b)=\{b',c\}$. If $I_G(S_{ab,bc},b)=\{b'\}$, then no $I$-set is modified when we change the perspective from $G_{ab,bc}$ to $G$ and in this case set $S_{ab,bc}$ is a locating-dominating set in $G$. On the other hand, if $I_G(S_{ab,bc},b)=\{b',c\}$, then it is possible that $I_G(S_{ab,bc},b)=I_G(S_{ab,bc},c')$. If this is not the case, then $S_{ab,bc}$ is a locating-dominating set in $G$. However, if $I(b)=I(c')$, then $c'\not\in S_{ab,bc}$ and we may consider the set $S_{ab,bc}'$. Notice that this change does not modify $I(a)$. Moreover, we have $I_G(S_{ab,bc}',b)=\{b'\}$. Since $S_{ab,bc}'$ is a locating-dominating set in $G_{ab,bc}$ and no $I$-sets are modified when we transfer to $G$, the set $S_{ab,bc}'$ is a locating-dominating set also in $G$. Moreover, both $S_{ab,bc}$ and $S_{ab,bc}'$ satisfy the claimed upper bound on the cardinality.
\hfill \mbox{$\blacktriangleleft \blacktriangleleft \blacktriangleleft$}
\end{subsubsubcase}

\begin{subsubsubcase}[$c$ is a twin in $G_{ab,bc}$]

Notice that since $c$ is adjacent to exactly $d$ and $c'$ in $G_{ab,bc}$ and $N(d)=\{a,c,d'\}$, the vertex $c$ is a twin with $d'$. Notice that since $c,c',d',d$ is a four cycle and $d'$ has degree~$2$ in $G$, we have $\deg(c')=3$ and $c'$ is not a support vertex (see Figure~\ref{fig_case1232}). Let us next consider the graph $G_{c,d}=G-c-d$. Since $G$ is twin-free and neither $c'$ nor $b$ is a support vertex in $G$, the graph $G_{c,d}$ is twin-free. Hence, there exists an optimal locating-dominating set $S_{c,d}$ containing no leaves and every support vertex in $G_{c,d}$ by Lemmas~\ref{LemLDLeaves} and~\ref{LemLDSup} with cardinality $|S_{c,d}|\leq \frac{n}{2}-1$. Notice that $b,c'\in S_{c,d}$. Let us consider the set $S=S_{c,d}\cup\{d\}$. We have $I_G(S,a)=\{b,d\}$,  $I_G(S,c)=\{b,d,c'\}$ and $I_G(S,d')=\{c',d\}$. All other $I$-sets remain unmodified and since $S_{c,d}$ is a locating-dominating set in $G_{c,d}$, the set $S$ is locating-dominating in $G$.
\hfill $\blacktriangleleft \blacktriangleleft \blacktriangleleft$
\smallskip
\end{subsubsubcase}

\begin{subsubsubcase}[$b$ is a twin in $G_{ab,bc}$]\label{case_btwin}

Notice that since $b$ is a leaf in $G_{ab,bc}$, there is a leaf adjacent to $b'$ in $G$. Let us call this leaf $b''$ (see Figure~\ref{fig_case1233}). 

Assume first that there is an edge from $b'$ to $c'$. In this case, we consider the graph $G_{b',b''}=G-b'-b''$. This graph is twin-free since $\deg(d)=\deg(c)=3$ and hence, neither $b$ nor $c'$ may become a twin with a removal of their neighbour. Hence, there exists an optimal locating-dominating set $S_{b',b''}$ containing no leaves and every support vertex in $G_{b',b''}$ by Lemmas~\ref{LemLDLeaves} and~\ref{LemLDSup} with cardinality $|S_{b',b''}|\leq \frac{n}{2}-1$. Furthermore, the set $S=S_{b',b''}\cup\{b''\}$ is a locating-dominating set of cardinality at most $\frac{n}{2}$ in $G$. Indeed, each $I$-set of a vertex in $V(G_{b',b''})$ remains unmodified while $b''\in I(S, b')$ and no other vertex is adjacent to $b''$. Thus, we may assume that edge $b'c'$ does not exist. 

We may now consider the graph $G_{bb'}=G-bb'$. Observe that either $G_{bb'}$ is twin-free or vertices $b'$ and $b''$ form a two-vertex component $P_2$. Since $\LD(P_2)=1$, there exists a locating-dominating set $S_{bb'}$ in $G_{bb'}$ which has cardinality at most $\frac{n}{2}$ and contains all support vertices and no leaves in $G_{bb'}$ by Lemmas~\ref{LemLDLeaves} and~\ref{LemLDSup} (if $b'$ and $b''$ form a $P_2$ component we consider $b'$ as a support vertex and $b''$ as a leaf).  In particular, we have $b'\in S_{bb'}$. Hence, when we consider $S_{bb'}$ in $G$, the only $I$-set which changes between $G$ and $G_{bb'}$ is $I(b)$. Hence, we only need to confirm that $I(b)$ is unique when $b\not\in S_{bb'}$. First of all, observe that $|I_G(b)|\geq2$. If $a\in I_G(b)$, then $I_G(b)$ is unique since the edge $db'$ does not exist by Lemma~\ref{LemC4neighb}. Thus, we may assume that $c\in I_G(b)$. Hence, if $I_G(b)=I_G(x)$, then $x\in N(c)$ and $x=d$ or $x=c'$. However, by Lemma~\ref{LemC4neighb}, we have $d\not\in N(b')$. Thus, $x=c'$. However, by our assumption, the edge $c'b'$ does not exist. Therefore, $I_G(b)$ is unique and $S_{bb'}$ is a locating-dominating set in $G$ with the claimed cardinality. 
\hfill $\blacktriangleleft \blacktriangleleft \blacktriangleleft$ 
\end{subsubsubcase}
\hfill $\blacktriangleleft \blacktriangleleft$ 
\end{subsubcase}
\hfill $\blacktriangleleft$ 
\end{subcase}

%%%%%%%%%%%%%%%%%%%%%%%%%%%%%%%%%%

\begin{figure}
	\centering
	\begin{subfigure}[b]{0.30\linewidth}
		\centering
		\begin{tikzpicture}[label distance=2mm]
			\coordinate[label={left:$a$}] (a) at (0,1);
			\coordinate[label=above:$b$] (b) at (1,1);
			\coordinate[label={[label distance=3mm]below:$c$}] (c) at (1,0);
			\coordinate[label=left:$d$] (d) at (0,0);
			\coordinate[label=above:$b'$] (b') at (2,1);
			\coordinate[label=below:$c'$] (c') at (2,0);
			\coordinate[label=below:$b''$] (b'') at (3,1);
			%------EDGES---------------------
			\draw (a) -- (d) -- (c);
			\draw[thick, densely dotted] (a) -- (b) -- (c);
			\draw (b) -- (b') -- (b'')  (c) -- (c');
			\draw[dashed] (b') -- (c');
			\draw ($(d) + (270:.5)$) -- (d);
			\draw[dashed] ($(c') + (315:.5)$) -- (c');
			%------VERTICES------------------
			\draw \foreach \x in {(a),(b),(c),(d),(b'),(c'),(b'')}{
				\x node[circle, draw, fill=white, inner sep=0pt, minimum width=6pt] {}
			};
			%------OTHER---------------------
		\end{tikzpicture}
		\caption{ }\label{fig_case1233}
	\end{subfigure}
	\hfill
	\begin{subfigure}[b]{0.34\linewidth}
		\centering
		\begin{tikzpicture}[label distance=2mm]
			\coordinate[label={above:$a$}] (a) at (0,1);
			\coordinate[label=above:$b$] (b) at (1,1);
			\coordinate[label={[label distance=3mm]below:$c$}] (c) at (1,0);
			\coordinate[label=below:$d$] (d) at (0,0);
			\coordinate[label=above:$a'$] (a') at (-1,1);
			\coordinate[label=above:$b'$] (b') at (2,1);
			\coordinate[label=below:$c'$] (c') at (2,0);
			\coordinate[label=below:$d'$] (d') at (-1,0);
			\coordinate[label=above:$a''$] (a'') at (-2,1);
			%------EDGES---------------------
			\draw[thick, densely dotted] (a) -- (b) -- (c) -- (d) -- (a);
			\draw (a) -- (a') -- (d') -- (d)  (b) -- (b') -- (c') -- (c)  (a') -- (a'');
			\draw ($(d') + (180:.5)$) -- (d');
			\draw ($(b') + (0:.5)$) -- (b');
			\draw ($(c') + (0:.5)$) -- (c');
			%------VERTICES------------------
			\draw \foreach \x in {(a),(b),(c),(d),(a'),(b'),(c'),(d'),(a'')}{
				\x node[circle, draw, fill=white, inner sep=0pt, minimum width=6pt] {}
			};
			%------OTHER---------------------
			\draw[rounded corners, dashed] ($(a'') - (.5,.3)$) rectangle ($(a') + (.5,.7)$); 
		\end{tikzpicture}
		\caption{ }\label{fig_case132}
	\end{subfigure}
	\hfill
	\begin{subfigure}[b]{0.32\linewidth}
	\centering
	\begin{tikzpicture}[label distance=1.5mm,scale=0.9]
		\coordinate[label={above:$a$}] (a) at (0,1);
		\coordinate[label=above:$b$] (b) at (1,1);
		\coordinate[label={right:$c$}] (c) at (1,0);
		\coordinate[label=left:$d$] (d) at (0,0);
		\coordinate[label=above:$a'$] (a') at (-1,1);
		\coordinate[label=above:$b'$] (b') at (2,1);
		\coordinate[label=right:$c'$] (c') at (1,-1);
		\coordinate[label=above:$d'$] (d') at (-1,-1);
		\coordinate[label=right:$b''$] (b'') at (2,0);
		%------EDGES---------------------
		\draw (a) -- (d) -- (c);
		\draw[thick, densely dotted] (a) -- (b) -- (c);
			\draw (a) -- (a')  (b) -- (b')  (c) -- (c')  (d') -- (d);
		\draw[dashed] ($(a') + (140:.5)$) -- (a') -- ($(a') + (220:.5)$);
		\draw[dashed] ($(b') + (0:.5)$) -- (b') -- (b'') ;
		\draw[dashed] ($(c') + (230:.5)$) -- (c') -- ($(c') + (310:.5)$);
		\draw[dashed] ($(d') + (195:.5)$) -- (d') -- ($(d') + (265:.5)$);
		%------VERTICES------------------
		\draw \foreach \x in {(a),(b),(c),(d),(a'),(b'),(c'),(d'),(b'')}{
			\x node[circle, draw, fill=white, inner sep=0pt, minimum width=6pt] {}
		};
		%------OTHER---------------------
	\end{tikzpicture}
	\caption{ }\label{fig_case133}
	\end{subfigure}
	\caption{Illustrations for (a) Case 1.2.3.3, (b) Case 1.3.2 and (c) Case 1.3.3. The dotted edges indicate edges being removed. Dashed edges are edges, that may or may not exist. Dashed outlines indicate vertices being removed.}
\end{figure}
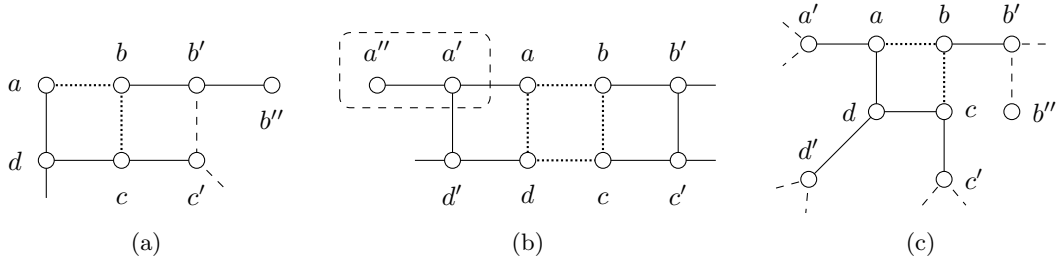

%%%%%%%%%%%%%%%%%%%%%%%%%%%%%%%%%%

Therefore, we may assume from now on that the four cycle contains no vertices of degree~$2$.

\begin{subcase}[Every vertex in a four cycle has degree $3$]

We denote the neighbours of $a$, $b$, $c$ and $d$ that are outside of the four cycle by $a'$, $b'$, $c'$ and $d'$, respectively. Recall that due to Lemma~\ref{LemC4neighb} the vertices $a'$, $b'$, $c'$ and $d'$ are distinct. We divide the proof into cases based on which of the possible edges between $a'$, $b'$, $c'$ and $d'$ are present.

\begin{subsubcase}[The edges $a'b'$, $b'c'$, $c'd'$ and $d'a'$ are present in $G$]
 
The entire graph $G$ is now determined. Indeed, we have $G= P_2 \Box C_4$ and the conjectured bound holds, since $G$ is a cubic graph (see~\cite{foucaud2016location}).
\hfill $\blacktriangleleft \blacktriangleleft$
\end{subsubcase}

Therefore, we may assume that at least one of the edges in the subcase above is not present. Without loss of generality, we assume that the edge $a'b'$ is not present. The proof is then divided into cases based on whether the incident edges $b'c'$ and $a'd'$ are present in $G$.

\begin{subsubcase}[The edge $a'b'$ is not present but the edges $b'c'$ and $a'd'$ are present in~$G$]

If none of the vertices $a'$, $b'$, $c'$ and $d'$ are support vertices, then the graph $G' = G - ab - bc - cd - da$ is clearly twin-free (see Figure~\ref{fig_case132}). The vertices $a$, $b$, $c$ and $d$ are leaves, and the vertices $a'$, $b'$, $c'$ and $d'$ are support vertices in $G'$. By the minimality of the number of edges of $G$ (or by the fact that $G'$ is now $C_4$-free), there exists a locating-dominating set $S'$ of $G'$ such that $|S'| \leq \frac{n}{2}$. Due to Lemmas~\ref{LemLDSup} and~\ref{LemLDLeaves}, we may assume that $a',b',c',d' \in S'$ and $a,b,c,d \notin S'$. The $I$-sets given by $S'$ are identical in $G'$ and $G$, and thus $S'$ is a locating-dominating set of $G$ with $|S'| \leq \frac{n}{2}$.

Assume then that at least one of $a'$, $b'$, $c'$ and $d'$, say $a'$, is a support vertex. (Notice that if $c'$ or $d'$ is a support vertex, then the edge $c'd'$ is not present, and these cases are symmetrical to $a'$ being a support vertex.) Let $a''$ be the leaf attached to $a'$ (see Figure~\ref{fig_case132}). Consider the graph $G_{a',a''} = G - a' - a''$. The only vertices that might have twins in $G_{a',a''}$ are $a$ and $d'$. The vertex $a$ does not have a twin, since $N_{G_{a',a''}} (a) = \{b,d\}$ and the only other vertex adjacent to both $b$ and $d$ is $c$, but $c'$ is a neighbour of $c$ that is not adjacent to $a$. If $d'$ has a twin, then that twin must be $a$, $c$ or $d$. The vertex $a$ has no twins, $b$ is adjacent to $c$ but not $d'$, and $c$ is adjacent to $d$ but not $d'$. Thus, the vertex $d'$ has no twins either. Therefore, the graph $G_{a',a''}$ is twin-free. By the minimality of $G$, there exists a locating-dominating set $S_{a',a''}$ of $G_{a',a''}$ with cardinality at most $\frac{n}{2} - 1$. Now the set $S = S_a \cup \{a'\}$ is a locating-dominating set of $G$ since $a''$ is the only vertex with an $I$-set containing only $a'$. Since $|S| \leq \frac{n}{2}$, the claim holds.
\hfill $\blacktriangleleft \blacktriangleleft$
\end{subsubcase}

\begin{subsubcase}[The edges $a'b'$ and $b'c'$ are not present in $G$]

Consider the graph $G_{ab,bc} = G - ab - bc$. The vertex $b$ is a leaf, and $b'$ is a support vertex in $G_{ab,bc}$ (see Figure~\ref{fig_case133}). 

\begin{subsubsubcase}[$G_{ab,bc}$ is twin-free]

There exists a locating-dominating set $S_{ab,bc}$ of $G_{ab,bc}$ such that $b' \in S_{ab,bc}$ and $b \notin S_{ab,bc}$ (by Lemmas~\ref{LemLDSup} and~\ref{LemLDLeaves}), and $|S_{ab,bc}| \leq \frac{n}{2}$. Since $b \notin S_{ab,bc}$, $I(b)$ is the only $I$-set given by $S_{ab,bc}$ that can be different in $G$ when compared to $G_{ab,bc}$. Indeed, if $a,c \notin S_{ab,bc}$, then all $I$-sets in $G$ are identical to the $I$-sets in $G_{ab,bc}$.
If $a \in S_{ab,bc}$ or $c \in S_{ab,bc}$, then $I_G(b)$ contains $a$ or $c$, but the rest of the $I$-sets are identical to those of $G_{ab,bc}$.
The only vertices whose $I$-sets could be the same as $I_G(b)$ are $a'$, $c'$ and $d$, but $b' \in I_G (b)$ and $b'$ is not adjacent to $a'$, $c'$ or $d$ (due to the edges $a'b'$ and $b'c'$ not being present and Lemma~\ref{LemC4neighb}). Thus, $I_G(b)$ is unique and $S_{ab,bc}$ is a locating-dominating set of $G$ with $|S_{ab,bc}| \leq  \frac{n}{2}$.
\hfill $\blacktriangleleft \blacktriangleleft \blacktriangleleft$
\end{subsubsubcase}

\begin{subsubsubcase}[$G_{ab,bc}$ is not twin-free]
 
Now at least one of $a$, $b$ and $c$ has a twin. Suppose that $a$ has a twin. Since $N_{G_{ab,bc}} (a) = \{a',d\}$, $d'$ and $c$ are the only possible twins of $a$. If $d'$ is a twin with $a$, then $a a' d' d a$ is a cycle in $G$ and the degree of $d'$ is two in $G$. This contradicts our assumption that all vertices in a four cycle have degree~$3$ in $G$. If $c$ is twins with $a$, then $a' = c'$, but this is impossible due to Lemma~\ref{LemC4neighb}. Thus, neither $a$ nor $c$ (by symmetry) have twins in $G_{ab,bc}$. Therefore, $b$ has a twin in $G_{ab,bc}$. Now either $b$ and $b'$ form a $P_2$ component in $G_{ab,bc}$ or $b'$ has a leaf $b''$ in $G$. 
These cases are handled somewhat similarly as Case~\ref{case_btwin}. 

Assume that $b$ and $b'$ form a $P_2$-component in $G_{ab,bc}$. The graph $G_{b,b'} = G - b - b'$ is twin-free, and thus there exists a locating-dominating set $S_{b,b'}$ such that $|S_{b,b'}| \leq \frac{n}{2} - 1$. The set $S = S_{b,b'} \cup \{b\}$ is clearly a locating-dominating set of $G$, and we have $|S| \leq \frac{n}{2}$.

Assume then that $b'$ has a leaf $b''$ in $G$. Consider the graph $G_{bb'} = G - bb'$. Again, $G_{bb'}$ is either twin-free or $b'$ and $b''$ form a $P_2$-component in $G_{bb'}$. As in the previous case, if $b'$ and $b''$ form a $P_2$-component in $G_{bb'}$, we can easily construct a locating-dominating set $S$ of $G$ such that $|S| \leq \frac{n}{2}$ by considering a locating-dominating set of $G - b' - b''$. So assume that $G_{bb'}$ is twin-free. There exists a locating-dominating set $S_{bb'}$ of $G_{bb'}$ such that $|S_{bb'}| \leq \frac{n}{2}$ and $b' \in S_{bb'}$. We claim that the set $S_{bb'}$ is also a locating-dominating set of $G$. The only $I$-set that might differ between the two graphs is $I(b)$ (assuming $b \notin S_{bb'}$). Since $S_{bb'}$ is a locating-dominating set of $G_{bb'}$, we have $a \in S_{bb'}$ or $c \in S_{bb'}$. Now, if $I_G(b)$ is the same as the $I$-set of some other vertex, then that vertex must be $a'$, $d$ or $c'$. However, we also have $b' \in I_G(b)$ and $b'$ is not adjacent to $a'$, $d$ or $c'$. Thus, $I_G(b)$ is unique, and the set $S_{bb'}$ is a locating-dominating set of $G$ with $|S_{bb'}| \leq \frac{n}{2}$.
\hfill $\blacktriangleleft \blacktriangleleft \blacktriangleleft$
\end{subsubsubcase}
\hfill $\blacktriangleleft \blacktriangleleft$
\end{subsubcase}
\hfill $\blacktriangleleft$
\end{subcase}
\end{case}

Therefore, $\LD(G) \leq \frac{n}{2}$ holds for all triangle-free twin-free subcubic graphs with no isolated vertices. We then assume that $G$ is not triangle-free.

\begin{case}[$G$ has triangles as induced subgraphs]

Let us assume $T = G[a,b,c]$ to be a triangle in $G$ induced by the vertices $a$, $b$ and $c$. If any two vertices of $T$ are of degree~2 in $G$, then it implies that the said vertices are twins in $G$, a contradiction to our assumptions. Hence, we assume from here on that at most one vertex of $T$ is of degree~2 in $G$.

\begin{subcase}[$T$ has a vertex of degree~2 in $G$]

Without loss of generality, let us assume that $\deg_G(a)=2$. This implies that the vertices $b$ and $c$ have neighbours, say $b'$ and $c'$, respectively, in $G$ outside of $T$ (see Figure~\ref{fig_case21}). Observe that $b' \neq c'$, or else, the pair $b$ and $c$ would be twins in $G$, a contradiction to our assumption. Let $G_{ab} = G - ab$.

\begin{subsubcase}[$G_{ab}$ is twin-free]

By our assumption on the minimality of the graph $G$, there exists an LD-set $S_{ab}$ of $G_{ab}$ such that $|S_{ab}| \leq \frac{n}{2}$. Moreover, by Lemmas~\ref{LemLDSup} and~\ref{LemLDLeaves}, since $c$ is a support vertex and $a$ is a leaf in $G_{ab}$, we can assume that $c \in S_{ab}$ and $a \notin S_{ab}$. We then show that the set $S_{ab}$ is also an LD-set of $G$. If $b \notin S_{ab}$, then $I_G(x) = I_{G_{ab}}(x)$ for each $x\in V(G)\setminus S_{ab}$, and thus $S_{ab}$ is an LD-set of~$G$.

Let us, therefore, assume next that $b \in S_{ab}$. Now, if $S_{ab}$ is not an LD-set of $G$, it would imply that there exists a vertex $x$ of $G$ other than $a$ and not in $S_{ab}$ such that the pair $a,x$ is separated in $G_{ab}$ but not in $G$. Since $I_G(a) = \{b,c\}$, we must have $I_G(x) = \{b,c\}$ which makes the vertices $b$ and $c$ twins in $G$, a contradiction to our assumption. Hence, $S_{ab}$ is an LD-set of $G$ also in the case that $a \notin S_{ab}$ and $b \in S_{ab}$.

Thus, overall, $S_{ab}$ is an LD-set of $G$ with $|S_{ab}| \leq \frac{n}{2}$ and thus, the result follows in the case that the graph $G_{ab}$ is twin-free. \hfill $\blacktriangleleft \blacktriangleleft$
\end{subsubcase}
Now, by symmetry, we may assume that, if the graph $G_{ac} = G - ac$ is also twin-free, then the result holds as well. 

%%%%%%%%%%%%%%%%%%%%%%%%%%%%%%%%%%%%%

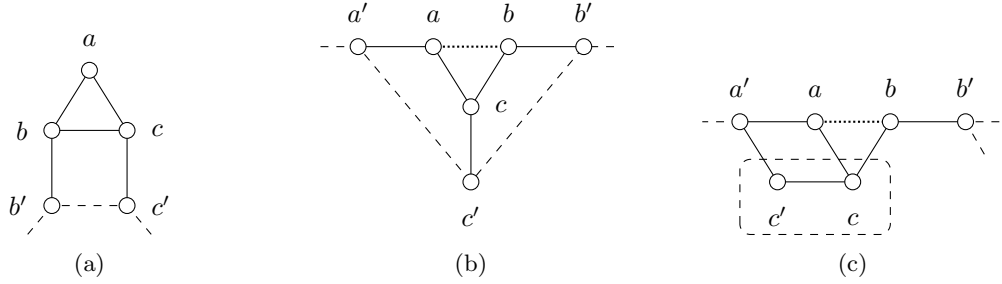
\begin{figure}
	\centering
	\begin{subfigure}[b]{0.32\linewidth}
		\centering
		\begin{tikzpicture}[label distance=2mm]
			\coordinate[label={above:$a$}] (a) at (.5,.8);
			\coordinate[label=left:$b$] (b) at (0,0);
			\coordinate[label={right:$c$}] (c) at (1,0);
			\coordinate[label={left:$b'$}] (b') at (0,-1);
			\coordinate[label={right:$c'$}] (c') at (1,-1);
			%------EDGES---------------------
			\draw (a) -- (b) -- (c) -- (a)  (b) -- (b')  (c) -- (c');
			\draw[dashed] ($(b') + (230:.5)$) -- (b') -- (c') -- ($(c') + (310:.5)$);
			%------VERTICES------------------
			\draw \foreach \x in {(a),(b),(c),(b'),(c')}{
				\x node[circle, draw, fill=white, inner sep=0pt, minimum width=6pt] {}
			};
			%------OTHER---------------------
		\end{tikzpicture}
		\caption{ }\label{fig_case21}
	\end{subfigure}
	\hfill
	\begin{subfigure}[b]{0.32\linewidth}
		\centering
		\begin{tikzpicture}[label distance=2mm]
			\coordinate[label={above:$a$}] (a) at (0,1);
			\coordinate[label=above:$b$] (b) at (1,1);
			\coordinate[label=right:$c$] (c) at (.5,.2);
			\coordinate[label=above:$a'$] (a') at (-1,1);
			\coordinate[label=above:$b'$] (b') at (2,1);
			\coordinate[label=below:$c'$] (c') at (.5,-.8);
			%------EDGES---------------------
			\draw (b) -- (c) -- (a)  (b) -- (b')  (a) -- (a')  (c) -- (c');
			\draw[thick,densely dotted] (a) -- (b);
			\draw[dashed] ($(a') + (180:.5)$) -- (a') -- (c') -- (b') -- ($(b') + (0:.5)$);
			%------VERTICES------------------
			\draw \foreach \x in {(a),(b),(c),(a'),(b'),(c')}{
				\x node[circle, draw, fill=white, inner sep=0pt, minimum width=6pt] {}
			};
		\end{tikzpicture}
		\caption{ }\label{fig_case22}
	\end{subfigure}
	\hfill
	\begin{subfigure}[b]{0.32\linewidth}
		\centering
		\begin{tikzpicture}[label distance=2mm]
			\coordinate[label={above:$a$}] (a) at (0,1);
			\coordinate[label=above:$b$] (b) at (1,1);
			\coordinate[label={[label distance=3mm]below:$c$}] (c) at (.5,.2);
			\coordinate[label=above:$a'$] (a') at (-1,1);
			\coordinate[label=above:$b'$] (b') at (2,1);
			\coordinate[label=below:$c'$] (c') at (-.5,.2);
			%------EDGES---------------------
			\draw (b) -- (c) -- (a)  (b) -- (b')  (a) -- (a') -- (c') -- (c);
			\draw[thick,densely dotted] (a) -- (b);
			\draw[dashed] (a') -- ($(a') + (180:.5)$);
			\draw[dashed] ($(b') + (300:.5)$) -- (b') -- ($(b') + (0:.5)$);
			%------VERTICES------------------
			\draw \foreach \x in {(a),(b),(c),(a'),(b'),(c')}{
				\x node[circle, draw, fill=white, inner sep=0pt, minimum width=6pt] {}
			};
			%------OTHER---------------------
			\draw[rounded corners,dashed] ($(c') - (.5,.7)$) rectangle ($(c) + (.5,.3)$) ;
		\end{tikzpicture}
		\caption{ }\label{fig_case222}
	\end{subfigure}
	\caption{Illustrations for (a) Case 2.1, (b) Case 2.2 and (c) Case 2.2.2. The dotted edges indicate edges being removed. Dashed edges are edges, that may or may not exist. Dashed outlines indicate vertices being removed.}
\end{figure}

%%%%%%%%%%%%%%%%%%%%%%%%%%%%%%%%%%%

\begin{subsubcase}[Both $G_{ab}$ and $G_{ac}$ have twins]\label{Case2ab+actwins}

Let us first look at the graph $G_{ab}$. The twins in $G_{ab}$ must either be a pair $a,x$ or a pair $b,y$, where $x$ and $y$ are vertices of $G$ different from $a$ and $b$, respectively.

\begin{subsubsubcase}[$a,x$ are twins in $G_{ab}$ for some $x \in V(G) \setminus \{a\}$]

In this case, since $ac \in E(G)$, we must have $x \in N_G(c) \setminus \{a\} = \{b,c'\}$. If $x = b$, that is, if $a$ and $b$ are twins in $G_{ab}$, it implies a contradiction since $\deg_{G_{ab}}(a) = 1 \ne 2 = \deg_{G_{ab}}(b)$. Therefore, $x \ne b$. In other words, we have $x = c'$, that is, $a$ and $c'$ are twins in $G_{ab}$. Therefore, we must also have $\deg_G(c') = \deg_{G_{ab}}(c') = \deg_{G_{ab}}(a) = 1$. We now look at the graph $G_{ac} = G - ac$. By our assumption on Case~\ref{Case2ab+actwins}, the graph $G_{ac}$ also has twins. However, $G_{ac}$ cannot have twins of the form $c,z$ for some vertex $z\neq c$ of $G$, since the neighbour $c'$ of $c$ is of degree~$1$ in $G_{ac}$. Therefore, by analogy to the previous case when $a$ and $c'$ were twins in $G_{ab}$, the vertices $a$ and $b'$ must be twins in $G_{ac}$. Again, by the same analogy, we infer that $\deg_G(b') = 1$. Therefore, the graph $G$ is determined and it can be checked that the set $S = \{b,c\}$ is an LD-set of $G$ with $|S| < \frac{1}{2} \times 5 = \frac{n}{2}$. Hence, the result follows. \hfill $\blacktriangleleft \blacktriangleleft \blacktriangleleft$
\end{subsubsubcase}
Again, by symmetry, we may assume that the result also holds in the case where $a$ is a twin in graph $G_{ac}$. Thus, we assume from here on that in graphs $G_{ab}$ or $G_{ac}$ the vertex $a$ does not belong to a pair of twins.

\begin{subsubsubcase}[$b,y$ are twins in $G_{ab}$ for some vertex $y \in V(G) \setminus \{a,b\}$]

In this case, since $bc \in E(G)$, we must have $y \in N_G(c) \setminus \{a,b\} = \{c'\}$. Therefore, we must have $y=c'$ with $b'c' \in E(G)$. Moreover, $\deg_{G}(c') = \deg_{G_{ab}}(c') = \deg_{G_{ab}}(b) = 2$. We again look at the graph $G_{ac}$ which, by %T
assumption on Case~\ref{Case2ab+actwins}, has twins %T
other than the pair $a,b'$. Therefore, by symmetry to the graph $G_{ab}$,  $b'$ and $c$ must be twins in $G_{ac}$ with $\deg_G(b') = 2$. Therefore, the graph $G$ is again determined and it can be checked that the set $S = \{b,c\}$ is an LD-set of $G$ with $|S| = 2 < \frac{1}{2} \times 5 = \frac{n}{2}$. Hence, the result holds in this case. \hfill $\blacktriangleleft \blacktriangleleft \blacktriangleleft$
\end{subsubsubcase}
Hence, the result follows in the case that both $G_{ab}$ and $G_{ac}$ have twins. \hfill $\blacktriangleleft \blacktriangleleft$
\end{subsubcase}
In conclusion, therefore, the claim holds when $T$ has a vertex of degree~$2$ in $G$. \hfill $\blacktriangleleft$ 
\end{subcase}
\begin{subcase}[Each vertex of $T$ is of degree~$3$ in $G$]

By assumption, we have $\deg_G(a) = \deg_G(b) = \deg_G(c) = 3$. Let $N_G(a) \setminus \{b,c\} = a'$, $N_G(b) \setminus \{a,c\} = b'$ and $N_G(c) \setminus \{a,b\} = c'$. Notice that each of $a'$, $b'$ and $c'$ must be distinct, or else, $G$ would have twins, a contradiction to our assumption. If $a'b', a'c', b'c' \in E(G)$, then the graph is a cubic graph in which the vertex subset $S = \{a,b,c\}$ can be checked to be an LD-set of order $|S| = 3 = \frac{1}{2} \times 6 = \frac{n}{2}$. Hence, in this case, the result holds. Let us, therefore, assume without loss of generality that $a'b' \notin E(G)$ (see Figure~\ref{fig_case22}). We now consider the graph $G_{ab} = G - ab$.

\begin{subsubcase}[$G_{ab}$ is twin-free]

By the minimality of $G$, let us assume that $S_{ab}$ is an LD-set of $G_{ab}$ such that $|S_{ab}| \leq \frac{n}{2}$. We show that the set $S_{ab}$ is also an LD-set of $G$. Now, if either $a,b \in S_{ab}$ or $a,b \notin S_{ab}$, then the set $S_{ab}$ is also an LD-set of $G$ and we are done. Indeed, in these cases we have $I_G(S_{ab} ; x) = I_{G_{ab}}(S_{ab} ; x)$ for each $x\not\in S_{ab}$. Hence, by symmetry and therefore without loss of generality, let us assume that $a \in S_{ab}$ and $b \notin S_{ab}$. Let us %T 
next suppose on the contrary that $S_{ab}$ is not an LD-set of $G$. Then, the only way that can happen is if there exists a vertex $y$ of $G$ other than $b$ and not in $S_{ab}$ such that the pair $b,y$ is separated in $G_{ab}$ but not in $G$. In particular, we must have $y \in N_{G_{ab}}(a) = \{a',c\}$. 

Let us first assume that $y = a'$, that is, the pair $a',b$ is not separated by $S_{ab}$ in $G$. We must have $|\{b',c\} \cap S_{ab}| \ge 1$ in order for $S_{ab}$ to dominate $b$. Now, if $b' \in S_{ab}$, it implies that $a'b' \in E(G)$ contrary to our assumption. Therefore, $b' \notin S_{ab}$. This implies that $c \in S_{ab}$. This further implies that $a'c \in E(G)$, thus making the pair $a,c$ twins in $G$, a contradiction to our assumption. Hence, we conclude that $y \ne a'$. Let us therefore assume now that $y = c$, that is the pair $b,c$ is not separated by $S_{ab}$ in $G$. Therefore, in particular, we have $c \notin S_{ab}$. Therefore, in order for $S_{ab}$ to dominate $b$, we must have $b' \in S_{ab}$ which implies that $b'c \in E(G)$. This further implies that the vertices $b$ and $c$ are twins in $G$, a contradiction to our assumption. Therefore, $y \ne c$ either. In other words, this proves that $S_{ab}$ is indeed an LD-set of $G$.

Hence, in the case that the graph $G_{ab}$ is twin-free, we find an LD-set $S_{ab}$ of $G$ such that $|S_{ab}| \le \frac{n}{2}$ and thus, the result holds. \hfill $\blacktriangleleft \blacktriangleleft$
\end{subsubcase}

\begin{subsubcase}[$G_{ab}$ has twins]

In this case, a twin pair in $G_{ab}$ is either of the form $a,x$ or of the form $b,y$, where $x$ and $y$ are vertices of $G$ different from $a$ and $b$, respectively. By symmetry, let us consider a pair $a,x$ to be twins in $G_{ab}$. Now, since $ac \in E(G)$, we must have $x \in N_G(c) \setminus \{a\} = \{b,c'\}$. If $x = b$, that is, if $a$ and $b$ are twins in $G_{ab}$, then they are twins in $G$ as well, a contradiction to our assumption. Therefore, $x \ne b$ and so, we have $x = c'$, that is, $a$ and $c'$ are twins in $G_{ab}$. This implies that $a'c' \in E(G)$ (see Figure~\ref{fig_case222}). Moreover, $\deg_G(c') = \deg_{G_{ab}}(c') = \deg_{G_{ab}}(a) = 2$. We next consider the graph $G_{c,c'} = G - c - c'$.

Observe first that $G_{c,c'}$ is connected since $N_G(c')=\{a',c\}$ and $N_G(c)=\{a,b,c'\}$. Furthermore, graph $G_{c,c'}$ is also twin-free. Indeed, if $x\in V(G_{c,c'})$ is a twin with some vertex $y\in V(G_{c,c'})$, then we may assume, without loss of generality, that $x\in\{a,b,a'\}$. We have $N_{G_{c,c'}}(a)=\{b,a'\}$. Thus, if $x=a$, then $y=b'$ and the edge $a'b'\in E(G)$, a contradiction. By symmetry, we have that $x\neq b$ and $y\not\in \{a,b\}$ and hence, $x=a'$. However, now $y\in N_{G_{c,c'}}(a)$. Thus, $y=b$, a contradiction. Therefore, graph $G_{c,c'}$ is twin-free and, by the minimality of $G$, it admits an LD-set $S_{c,c'}$ of size at most $\frac{n}{2}-1$.

Observe that for set $S_{c,c'}$ to dominate $a$ in $G_{c,c'}$, we have $\{a,b,a'\}\cap S_{c,c'}\neq \emptyset$. First assume that $a'\in S_{c,c'}$.
\begin{subsubsubcase}[$a'\in S_{c,c'}$]
We consider set $S=S_{c,c'}\cup\{c'\}$. In particular, set $S$ is dominating in $G$ and it separates all vertex pairs $x,y\in V(G_{c,c'})\setminus S_{c,c'}$ using the vertices in $S_{c,c'}$. Furthermore, vertex $c$ is the only vertex in $V(G)\setminus S_{c,c'}$ adjacent to $c'\in S$. Thus, $S$ separates all vertices in $V(G)\setminus S$ and it is an LD-set in graph $G$ with cardinality at most $\frac{n}{2}$. \hfill $\blacktriangleleft \blacktriangleleft \blacktriangleleft$
\end{subsubsubcase}

Assume next that $a$ or $b$ is in $S_{c,c'}$.
\begin{subsubsubcase}[$a\in S_{c,c'}$ or $b\in S_{c,c'}$]
We consider set $S=S_{c,c'}\cup\{c\}$. Clearly, $S$ is dominating in $G$. Furthermore, as in the previous case, set $S$ separates all vertices $x,y\in V(G_{c,c'})\setminus S_{c,c'}$ using the vertices in $S_{c,c'}$. Finally, vertex $c'$ is adjacent to $c$ but neither of the vertices $a$ nor $b$. Thus, also $c'$ is separated from all other vertices in $V(G)\setminus S$. Hence, $S$ separates all vertices in $V(G)\setminus S$ and it is an LD-set in graph $G$ with cardinality at most $\frac{n}{2}$. \hfill $\blacktriangleleft \blacktriangleleft \blacktriangleleft$
\end{subsubsubcase}

Therefore, graph $G$ admits an LD-set of claimed cardinality in the case that $G_{ab}$ has twins.\hfill \flushright{$\blacktriangleleft \blacktriangleleft$}
\end{subsubcase}

Thus, in the case that all three vertices of $T$ are of degree~$3$ in $G$, the result follows. \hfill $\blacktriangleleft$
\end{subcase}

Therefore, the theorem holds if the graph $G$ has triangles as induced subgraphs.
\end{case}

Finally, Cases 1 and 2 together prove the theorem.
\end{proof}

%%%%%%%%%%%%%%%%%%%%%%%%%%%%%%%%%%%%%

\begin{theorem}\label{theclosedtwins}
Let $G$ be a connected open-twin-free subcubic graph on $n$ vertices without isolated vertices other than $K_3$ or $K_4$. We have $$\LD(G)\leq \frac{n}{2}.$$
\end{theorem}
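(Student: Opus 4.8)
The goal is to extend Proposition~\ref{propSubcubic} from twin-free graphs to open-twin-free graphs, that is, to allow closed twins while still forbidding open twins. The plan is to reduce the closed-twin case back to the already-proven twin-free case by an induction on the number of vertices. Since closed twins $u,v$ satisfy $N[u]=N[v]$, they are adjacent and share all their other neighbours; in a subcubic graph this means $u$ and $v$ have the same degree and at most two common neighbours besides each other. Lemma~\ref{LemTriEdge} and the subcubic constraint sharply limit how closed twins can sit in $G$: a pair of closed twins of degree~$2$ forms (with a common neighbour) a triangle, and a pair of closed twins of degree~$3$ lies in a triangle whose third vertices coincide.

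The key structural step I would carry out first is to analyze a \emph{closed-twin class}, i.e.\ a maximal set of pairwise closed twins. Because $G$ is subcubic, such a class has size at most $3$ (the only way to get three mutually closed twins is $K_3$, and four would force $K_4$, both excluded by hypothesis). So every closed-twin class has size exactly $2$, say $\{u,v\}$ with $N[u]=N[v]$. The plan is to ``collapse'' such a pair: form $G'=G-v$ (delete one twin). I would argue that $G'$ is again a connected open-twin-free subcubic graph with fewer vertices, apply the induction hypothesis (or Proposition~\ref{propSubcubic} if $G'$ happens to be twin-free) to get an LD-set $S'$ of $G'$ with $|S'|\le (n-1)/2$, and then show $S=S'\cup\{u\}$ (or $S'$ itself, placing $u$ into the dominating set) is an LD-set of $G$. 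The point is that $v$ is dominated by $u\in S$, and since $u,v$ were closed twins, adding $u$ to the solution distinguishes $v$ from everything else: $v$ is the unique non-solution vertex whose $I$-set equals $I(u)\cap N[v]$. This costs one extra vertex for each collapsed pair while removing one vertex from $n$, preserving the ratio.

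The main obstacle, and the step requiring the most care, is verifying that deleting one twin does not \emph{create} open twins in $G'$, and that the induction base cases are handled correctly. When we remove $v$, two vertices $x,y$ that were distinguished in $G$ only by $v$ might become open twins in $G'$; in a subcubic graph this can genuinely happen (e.g.\ if $x,y$ both had $v$ as their distinguishing neighbour). I would need a case analysis, using that $N[u]=N[v]$ and $\deg\le 3$, to show that any such newly-formed open-twin pair in $G'$ can be absorbed: either it does not arise because $u$ still separates $x$ and $y$, or the local configuration is one of finitely many small graphs ($K_3$, $K_4$, and their immediate neighbours) that are excluded or checked by hand. The hypothesis explicitly excludes $K_3$ and $K_4$ precisely to kill the degenerate classes where collapsing fails; I would confirm these are the only exceptions by direct inspection of small cases. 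Handling the interaction between several distinct closed-twin pairs simultaneously is cleanest via the ``smallest counterexample'' framing already used in Proposition~\ref{propSubcubic}: assume $G$ is a minimal counterexample, locate a closed-twin pair, and derive a contradiction from the reduction, so that at most one pair need be treated at a time.
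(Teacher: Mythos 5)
Your structural observations (closed twins are adjacent, closed-twin classes have size two once $K_3$ and $K_4$ are excluded, the degree-$2$ and degree-$3$ cases behave differently) are all correct and match the paper. But your core reduction has a genuine arithmetic gap. Deleting a single twin $v$ gives a graph $G'=G-v$ on $n-1$ vertices, so the inductive bound only yields $|S'|\le\frac{n-1}{2}$, and putting one vertex back gives $|S'\cup\{u\}|\le\frac{n-1}{2}+1=\frac{n+1}{2}>\frac{n}{2}$ whenever $n$ is odd. Odd-order instances are unavoidable: the paper's own proof exhibits a $5$-vertex connected open-twin-free graph with a degree-$3$ closed-twin pair, and one can attach a triangle of degree-$2$ closed twins to an arbitrarily long path to get odd examples of any size. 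Your fallback of using $S'$ itself (forcing $u\in S'$) does not rescue this: in the degree-$2$ case $u$ becomes a \emph{leaf} of $G-v$, so Lemmas~\ref{LemLDSup} and~\ref{LemLDLeaves} push $u$ \emph{out} of the optimal set and force the support vertex $w$ in, whence $I_G(u)=I_G(v)=\{w\}$ and the pair $u,v$ is not separated; and there is no lemma letting you force an arbitrary non-support vertex into an optimal LD-set without losing the cardinality bound. A secondary weak point is your claim that any open twins created in $G-v$ are confined to finitely many small graphs; this happens to be checkable here, but it needs the explicit case analysis rather than an appeal to smallness.

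The paper avoids both problems by inducting on the \emph{number of closed twins} rather than on $n$, and by performing reductions that do not shrink the budget: for degree-$2$ closed twins it deletes the \emph{edge} $vw$ (keeping all $n$ vertices, so the inductive LD-set already has size $\le\frac{n}{2}$ and is shown to survive the edge's reinsertion), and in the degree-$3$ case it either deletes \emph{two} vertices at once (leaving room to add one vertex back) or again deletes only edges. If you want to keep a vertex-deletion strategy, you must delete twins in a way that removes at least two vertices per added LD-vertex, or switch to edge deletions as the paper does.
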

\begin{proof}
Let us assume on the contrary, that there exists an open-twin-free subcubic graph $G$ on $n$ vertices without isolated vertices other than $K_3$ or $K_4$ for which we have $\LD(G)> \frac{n}{2}.$ Furthermore, let us assume that $G$ has the fewest number of closed twins among these graphs. By Proposition~\ref{propSubcubic}, there is at least one pair of closed twins in $G$. Notice that since $G$ is not $K_4$, we cannot have a triple of pairwise closed twins. Moreover, notice that closed twins have degree 2 or 3.

Let us first assume that there exist closed twins $u,v$ of degree $2$ in $G$ and that they are adjacent to vertex $w$. Notice that now $\deg(w)=3$. Consider graph $G'=G-vw$. In this graph, vertex $v$ is a leaf, there are no open twins and the number of closed twins is smaller than in $G$. Hence, $\LD(G')\leq \frac{n}{2}$. Let us denote by $S'$ an optimal locating-dominating set in $G'$. By Lemma~\ref{LemLDSup}, we may assume that $u\in S'$ and by Lemma~\ref{LemLDLeaves}, we may assume that $v\not\in S'$. Moreover, since $I_{G'}(w)\neq I_{G'}(v)=\{u\}$, we have some vertex $z\in I_{G'}(w)$. However, now $S'$ is a locating-dominating set in $G$. Indeed, we either have $z\neq w$ and $I_G(v)=\{u\}$ while $I_G(w)=\{z,u\}$. If on the other hand $z=w$, then $v$ is separated from all other vertices in $V(G)\setminus S'$ since $v$ is the only vertex in $V(G)\setminus S'$ which is adjacent to $u$.

Let us next assume that $u$ and $v$ are closed twins of degree 3 in $G$. Let us denote $N[u]=N[v]=\{u,v,w,w'\}$. Notice that if there is an edge between $w$ and $w'$, then $G$ is the complete graph $K_4$. Moreover, if they are adjacent to the same vertex $z\neq u,v$, then $w$ and $w'$ are open twins. The same is true if $w$ and $w'$ have degree 2. Hence, we may assume that $z\in N(w)\setminus N(w')$. Assume next that $\deg(w')=2$. First notice that if $z$ is a leaf, then $G$ has five vertices and set $\{w,v\}$ is locating-dominating in $G$. Hence, we may assume that $z$ is not a leaf. Consider graph $G_{v,w'}=G-v-w'$. Since $G_{v,w'}$ has fewer closed twins than $G$, there exists a locating-dominating set $S_{v,w'}$ of size at most $\frac{n}{2}-1$ which has $w\in S_{v,w'}$ and $u\not\in S_{v,w'}$ by Lemmas~\ref{LemLDSup} and~\ref{LemLDLeaves}. Furthermore, now set $S=S_{v,w'}\cup\{v\}$ is locating-dominating in $G$. Indeed, $w$ separates vertices $u$ and $w'$ while $v$ separates $w$ from all other vertices of $G$. Hence, $\deg(w')=3$.

Assume next that $z\in N(w)\setminus N(w')$ and $z'\in N(w')\setminus N(w)$. Consider graph $G'=G-uw-uw'-vw$. In this graph $u$ is a leaf and $v$ is its support vertex. Now also $w$ and $z$ are a leaf and a support vertex, respectively, or $w$ and $z$ form a $P_2$ component. Moreover, there is a possibility that we created a pair of open twins if $z$ is a support vertex in $G$. 

Assume first that $w$ and $z$ form a $P_2$ component in $G'$. Now $G_{w,z}' = G'-w-z$ contains fewer closed twins than $G$. Thus, there exists an optimal locating-dominating set $S'$ of $G_{w,z}'$ that contains $v$ and $|S'|\leq \frac{n}{2} - 1$. Now, $S = S' \cup \{z\}$ is a locating-dominating set of $G'$ that contains both $v$ and $z$ such that $|S|\leq \frac{n}{2}$. Moreover, the vertex $w'$ is dominated by at least two vertices. When we consider the set $S$ in $G$, the only possible vertices in $V(G)\setminus S$ which might not be separated are $u$, $w$ and $w'$. However, $w$ is only of these vertices adjacent to $z$ and $w'$ is dominated by at least two vertices. Hence, either $w'\in S$ or $z'\in S$. In both cases, set $S$ is locating-dominating in $G$.

Assume then that $z$ is a support vertex in $G'$ but not in $G$. The number of closed twins in $G'$ is smaller than in $G$. Thus, we have an optimal locating-dominating set $S$ in $G'$ which contains vertices $z$ and $v$ such that $|S|\leq \frac{n}{2}$. By the same arguments as in the case above concerning the $P_2$-component, $S$ is locating-dominating also in $G$.

Let us next assume that $z$ is a support vertex and $\ell_z$ is the adjacent leaf in $G$. In this case, we consider subgraph $G''=G-uw-vw'$. Notice that $G''$ does not contain any open twins and it has a smaller number of closed twins than $G$. Hence, it admits an optimal locating-dominating set $S'$ with $|S'|\leq \frac{n}{2}$. By Lemma~\ref{LemLDSup} we may assume that $z\in S'$ and by Lemma~\ref{LemLDLeaves} that $\ell_z\not\in S'$. Hence, we have $w\in S'$ or $v\in S'$ for separating $\ell_z$ and $w$. If $w\in S'$, then also set $S_w=(S'\setminus \{w\})\cup\{v\}$ is a locating-dominating set in $G''$. Moreover, $S_w$ is a locating-dominating set in $G$. Indeed, the only vertices in $G$ which might not be separated are $w,w'$ and $u$. However, $w$ is the only one of these vertices adjacent to $z$. If $w'$ and $u$ are not separated in $G$, then $w', u \notin S_w$. However, $w'$ is dominated by $S_w$ in $G''$, and thus $z' \in S_w$ and it separates $w'$ and $u$ also in $G$. Therefore, $S_w$ is a locating-dominating set of cardinality $|S_w|\leq \frac{n}{2}$ in $G$.  Now, the claim follows.
\end{proof}

%%%%%%%%%%%%%%%%%%%%%%%%%%%%

\section{Subcubic graphs with open twins of degree~$3$}\label{SecOpentwins}

Let $G$ be any graph, $F$ be a connected graph and $U$ be a vertex subset of $F$. Then a subgraph $H$ of $G$ is called an \emph{$(F;U)$-subgraph} of $G$ if the following hold.
\begin{enumerate}
\item There exists an isomorphism $j : V(F) \to V(H)$.

\item $N_G[j(u)] \subseteq V(H)$ and $j(u)j(v)\in E(H)$ for all $u \in U$ and $j(v)\in N_G(j(u))$.
\end{enumerate}

We note that an $(F;U)$-subgraph $H$ of $G$ could also be considered as a subgraph of $G$ isomorphic to $F$ such that the closed neighbourhood in $G$ of any vertex $j(u)$ for $u\in U$ together with the edges in $G$ incident with $j(u)$ are also contained in the subgraph $H$.

%%%%%%%%%%%%%%%%%%%%%%%%%%%%%%%%%%

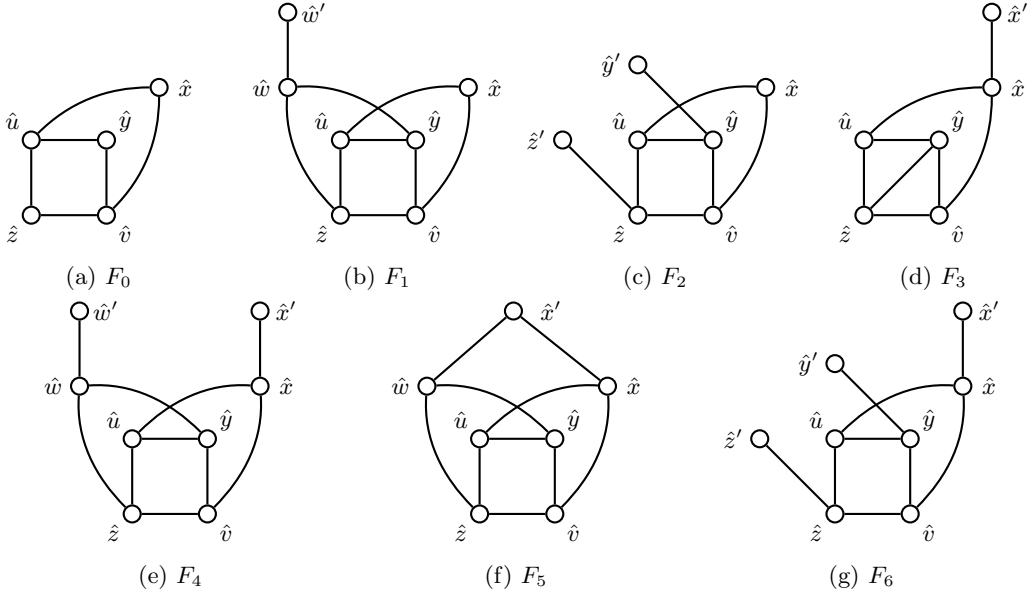
\begin{figure}[t!]
\centering
\begin{subfigure}[t]{0.24\textwidth}
\centering
\begin{tikzpicture}[
blacknode/.style={circle, draw=black!, fill=black!, thick},
whitenode/.style={circle, draw=black!, fill=white!, thick},
scale=0.5]
\tiny
%right
\node[whitenode] (z) at (0,0) {}; \node at (-0.5,-0.5) {\small $\hat{z}$};
\node[whitenode] (v) at (2,0) {}; \node at (2.5,-0.5) {\small $\hat{v}$};
\node[whitenode] (y) at (2,2) {}; \node at (2.5,2.5) {\small $\hat{y}$};
\node[whitenode] (u) at (0,2) {}; \node at (-0.5,2.5) {\small $\hat{u}$};
\node[whitenode] (x) at (3.4,3.4) {}; \node at (4.1,3.4) {\small $\hat{x}$};

%%edges
\draw[-, thick] (u) -- (y);
\draw[-, thick] (y) -- (v);
\draw[-, thick] (v) -- (z);
\draw[-, thick] (z) -- (u);

\draw[-, thick] (u) ..controls (1,3) and (2,3.4).. (x);
\draw[-, thick] (v) ..controls (3,1) and (3.4,2).. (x);

\end{tikzpicture}
\caption{$F_0$}\label{Fig_F0}
\end{subfigure}
%%%%%%
\begin{subfigure}[t]{0.24\textwidth}
\centering
\begin{tikzpicture}[
blacknode/.style={circle, draw=black!, fill=black!, thick},
whitenode/.style={circle, draw=black!, fill=white!, thick},
scale=0.5]
\tiny
%right
\node[whitenode] (z) at (0,0) {}; \node at ($(z)+(-0.5,-0.5)$) {\small $\hat{z}$};
\node[whitenode] (v) at (2,0) {}; \node at ($(v)+(0.5,-0.5)$) {\small $\hat{v}$};
\node[whitenode] (y) at (2,2) {}; \node at ($(y)+(0.5,0.5)$) {\small $\hat{y}$};
\node[whitenode] (u) at (0,2) {}; \node at ($(u)+(-0.5,0.5)$) {\small $\hat{u}$};
\node[whitenode] (x) at ($(y)+(1.4,1.4)$) {}; \node at ($(x)+(0.7,0)$) {\small $\hat{x}$};
\node[whitenode] (w) at ($(u)+(-1.4,1.4)$) {}; \node at ($(w)+(-0.7,0)$) {\small $\hat{w}$};
\node[whitenode] (w') at ($(w)+(0,2)$) {}; \node at ($(w')+(0.7,0)$) {\small $\hat{w}'$};

%%edges
\draw[-, thick] (u) -- (y);
\draw[-, thick] (y) -- (v);
\draw[-, thick] (v) -- (z);
\draw[-, thick] (z) -- (u);

\draw[-, thick] (u) ..controls ($(u)+(1,1)$) and ($(y)+(0,1.5)$).. (x);
\draw[-, thick] (v) ..controls ($(v)+(1,1)$) and ($(y)+(1.5,0)$).. (x);
\draw[-, thick] (y) ..controls ($(y)+(-1,1)$) and ($(u)+(0,1.5)$).. (w);
\draw[-, thick] (z) ..controls ($(z)+(-1,1)$) and ($(u)+(-1.5,0)$).. (w);

\draw[-, thick] (w) -- (w');

\end{tikzpicture}
\caption{$F_1$}\label{Fig_F1}
\end{subfigure}
%%%%%%
\begin{subfigure}[t]{0.24\textwidth}
\centering
\begin{tikzpicture}[
blacknode/.style={circle, draw=black!, fill=black!, thick},
whitenode/.style={circle, draw=black!, fill=white!, thick},
scale=0.5]
\tiny
%right
\node[whitenode] (z) at (0,0) {}; \node at ($(z)+(-0.5,-0.5)$) {\small $\hat{z}$};
\node[whitenode] (v) at (2,0) {}; \node at ($(v)+(0.5,-0.5)$) {\small $\hat{v}$};
\node[whitenode] (y) at (2,2) {}; \node at ($(y)+(0.5,0.5)$) {\small $\hat{y}$};
\node[whitenode] (u) at (0,2) {}; \node at ($(u)+(-0.5,0.5)$) {\small $\hat{u}$};
\node[whitenode] (x) at ($(y)+(1.4,1.4)$) {}; \node at ($(x)+(0.7,0)$) {\small $\hat{x}$};
\node[whitenode] (y') at ($(y)+(-2,2)$) {}; \node at ($(y')+(-0.7,0)$) {\small $\hat{y}'$};
\node[whitenode] (z') at ($(z)+(-2,2)$) {}; \node at ($(z')+(-0.7,0)$) {\small $\hat{z}'$};

%%edges
\draw[-, thick] (u) -- (y);
\draw[-, thick] (y) -- (v);
\draw[-, thick] (v) -- (z);
\draw[-, thick] (z) -- (u);

\draw[-, thick] (u) ..controls ($(u)+(1,1)$) and ($(y)+(0,1.5)$).. (x);
\draw[-, thick] (v) ..controls ($(v)+(1,1)$) and ($(y)+(1.5,0)$).. (x);
\draw[-, thick] (y) -- (y');
\draw[-, thick] (z) -- (z');

\end{tikzpicture}
\caption{$F_2$}\label{Fig_F2}
\end{subfigure}
%%%%%%
\begin{subfigure}[t]{0.24\textwidth}
\centering
\begin{tikzpicture}[
blacknode/.style={circle, draw=black!, fill=black!, thick},
whitenode/.style={circle, draw=black!, fill=white!, thick},
scale=0.5]
\tiny
%right
\node[whitenode] (z) at (0,0) {}; \node at (-0.5,-0.5) {\small $\hat{z}$};
\node[whitenode] (v) at (2,0) {}; \node at (2.5,-0.5) {\small $\hat{v}$};
\node[whitenode] (y) at (2,2) {}; \node at (2.5,2.5) {\small $\hat{y}$};
\node[whitenode] (u) at (0,2) {}; \node at (-0.5,2.5) {\small $\hat{u}$};
\node[whitenode] (x) at (3.4,3.4) {}; \node at (4.1,3.4) {\small $\hat{x}$};
\node[whitenode] (x') at ($(x)+(0,2)$) {}; \node at ($(x')+(0.7,0)$) {\small $\hat{x}'$};

%%edges
\draw[-, thick] (u) -- (y);
\draw[-, thick] (y) -- (v);
\draw[-, thick] (v) -- (z);
\draw[-, thick] (z) -- (u);
\draw[-, thick] (y) -- (z);
\draw[-, thick] (x) -- (x');

\draw[-, thick] (u) ..controls (1,3) and (2,3.4).. (x);
\draw[-, thick] (v) ..controls (3,1) and (3.4,2).. (x);

\end{tikzpicture}
\caption{$F_3$}\label{Fig_F3}
\end{subfigure}
%%%%%%
\begin{subfigure}[t]{0.3\textwidth}
\centering
\begin{tikzpicture}[
blacknode/.style={circle, draw=black!, fill=black!, thick},
whitenode/.style={circle, draw=black!, fill=white!, thick},
scale=0.5]
\tiny
%right
\node[whitenode] (z) at (0,0) {}; \node at ($(z)+(-0.5,-0.5)$) {\small $\hat{z}$};
\node[whitenode] (v) at (2,0) {}; \node at ($(v)+(0.5,-0.5)$) {\small $\hat{v}$};
\node[whitenode] (y) at (2,2) {}; \node at ($(y)+(0.5,0.5)$) {\small $\hat{y}$};
\node[whitenode] (u) at (0,2) {}; \node at ($(u)+(-0.5,0.5)$) {\small $\hat{u}$};
\node[whitenode] (x) at ($(y)+(1.4,1.4)$) {}; \node at ($(x)+(0.7,0)$) {\small $\hat{x}$};
\node[whitenode] (x') at ($(x)+(0,2)$) {}; \node at ($(x')+(0.7,0)$) {\small $\hat{x}'$};
\node[whitenode] (w) at ($(u)+(-1.4,1.4)$) {}; \node at ($(w)+(-0.7,0)$) {\small $\hat{w}$};
\node[whitenode] (w') at ($(w)+(0,2)$) {}; \node at ($(w')+(0.7,0)$) {\small $\hat{w}'$};

%%edges
\draw[-, thick] (u) -- (y);
\draw[-, thick] (y) -- (v);
\draw[-, thick] (v) -- (z);
\draw[-, thick] (z) -- (u);

\draw[-, thick] (u) ..controls ($(u)+(1,1)$) and ($(y)+(0,1.5)$).. (x);
\draw[-, thick] (v) ..controls ($(v)+(1,1)$) and ($(y)+(1.5,0)$).. (x);
\draw[-, thick] (y) ..controls ($(y)+(-1,1)$) and ($(u)+(0,1.5)$).. (w);
\draw[-, thick] (z) ..controls ($(z)+(-1,1)$) and ($(u)+(-1.5,0)$).. (w);

\draw[-, thick] (w) -- (w');
\draw[-, thick] (x) -- (x');

\end{tikzpicture}
\caption{$F_4$}\label{Fig_F4}
\end{subfigure}
%%%%%%
\begin{subfigure}[t]{0.3\textwidth}
\centering
\begin{tikzpicture}[
blacknode/.style={circle, draw=black!, fill=black!, thick},
whitenode/.style={circle, draw=black!, fill=white!, thick},
scale=0.5]
\tiny
%right
\node[whitenode] (z) at (0,0) {}; \node at ($(z)+(-0.5,-0.5)$) {\small $\hat{z}$};
\node[whitenode] (v) at (2,0) {}; \node at ($(v)+(0.5,-0.5)$) {\small $\hat{v}$};
\node[whitenode] (y) at (2,2) {}; \node at ($(y)+(0.5,0.5)$) {\small $\hat{y}$};
\node[whitenode] (u) at (0,2) {}; \node at ($(u)+(-0.5,0.5)$) {\small $\hat{u}$};
\node[whitenode] (x) at ($(y)+(1.4,1.4)$) {}; \node at ($(x)+(0.7,0)$) {\small $\hat{x}$};
\node[whitenode] (x') at ($(x)+(-2.5,2)$) {}; \node at ($(x')+(1,0)$) {\small $\hat{x}'$};
\node[whitenode] (w) at ($(u)+(-1.4,1.4)$) {}; \node at ($(w)+(-0.7,0)$) {\small $\hat{w}$};

%%edges
\draw[-, thick] (u) -- (y);
\draw[-, thick] (y) -- (v);
\draw[-, thick] (v) -- (z);
\draw[-, thick] (z) -- (u);

\draw[-, thick] (u) ..controls ($(u)+(1,1)$) and ($(y)+(0,1.5)$).. (x);
\draw[-, thick] (v) ..controls ($(v)+(1,1)$) and ($(y)+(1.5,0)$).. (x);
\draw[-, thick] (y) ..controls ($(y)+(-1,1)$) and ($(u)+(0,1.5)$).. (w);
\draw[-, thick] (z) ..controls ($(z)+(-1,1)$) and ($(u)+(-1.5,0)$).. (w);

\draw[-, thick] (w) -- (x');
\draw[-, thick] (x) -- (x');

\end{tikzpicture}
\caption{$F_5$}\label{Fig_F5}
\end{subfigure}
%%%%%%
\begin{subfigure}[t]{0.3\textwidth}
\centering
\begin{tikzpicture}[
blacknode/.style={circle, draw=black!, fill=black!, thick},
whitenode/.style={circle, draw=black!, fill=white!, thick},
scale=0.5]
\tiny
%right
\node[whitenode] (z) at (0,0) {}; \node at ($(z)+(-0.5,-0.5)$) {\small $\hat{z}$};
\node[whitenode] (v) at (2,0) {}; \node at ($(v)+(0.5,-0.5)$) {\small $\hat{v}$};
\node[whitenode] (y) at (2,2) {}; \node at ($(y)+(0.5,0.5)$) {\small $\hat{y}$};
\node[whitenode] (u) at (0,2) {}; \node at ($(u)+(-0.5,0.5)$) {\small $\hat{u}$};
\node[whitenode] (x) at ($(y)+(1.4,1.4)$) {}; \node at ($(x)+(0.7,0)$) {\small $\hat{x}$};
\node[whitenode] (x') at ($(x)+(0,2)$) {}; \node at ($(x')+(0.7,0)$) {\small $\hat{x}'$};
\node[whitenode] (y') at ($(y)+(-2,2)$) {}; \node at ($(y')+(-0.7,0)$) {\small $\hat{y}'$};
\node[whitenode] (z') at ($(z)+(-2,2)$) {}; \node at ($(z')+(-0.7,0)$) {\small $\hat{z}'$};

%%edges
\draw[-, thick] (u) -- (y);
\draw[-, thick] (y) -- (v);
\draw[-, thick] (v) -- (z);
\draw[-, thick] (z) -- (u);

\draw[-, thick] (u) ..controls ($(u)+(1,1)$) and ($(y)+(0,1.5)$).. (x);
\draw[-, thick] (v) ..controls ($(v)+(1,1)$) and ($(y)+(1.5,0)$).. (x);
\draw[-, thick] (y) -- (y');
\draw[-, thick] (z) -- (z');
\draw[-, thick] (x) -- (x');

\end{tikzpicture}
\caption{$F_6$}\label{Fig_F6}
\end{subfigure}
\caption{A list of pairwise non-isomorphic subcubic graphs. The vertices $\hat{u}$ and $\hat{v}$ are open twins of degree~$3$ in each graph.} \label{Fig_F}
\end{figure}

%%%%%%%%%%%%%%%%%%%%%%%%%%%%%%%%%%%%%%%

If $U = \{u_1,u_2, \ldots , u_k\}$ and $H$ is an $(F;U)$-subgraph of $G$, then we may also refer to $H$ as an $(F;u_1,u_2, \ldots , u_k)$-subgraph of $G$. We now define a list of pairwise non-isomorphic graphs as follows. In the proof of Theorem~\ref{thm_deg3}, we show that a connected subcubic graph $G$ with open twins of degree 3 and at least 7 edges has $F_0$ and at least one of graphs $F_i$, $i\in [1,6]$, as its subgraph. 
\begin{itemize}
\item \emph{Graph $F_0$}: $V(F_0) = \{\hat{u},\hat{v},\hat{x},\hat{y},\hat{z}\}$ and $E(F_0) = \{\hat{u} \hat{x}, \hat{u} \hat{y}, \hat{u} \hat{z}, \hat{v} \hat{x}, \hat{v} \hat{y}, \hat{v} \hat{z}\}$. See Figure~\ref{Fig_F0}. The vertices $\hat{u}$ and $\hat{v}$ are open twins of degree~$3$ in $F_0$. Any  graph that has a pair of open twins of degree~$3$ has an $(F_0;\hat{u}, \hat{v})$-subgraph.

\item \emph{Graph $F_1$}: $V(F_1) = \{\hat{u},\hat{v},\hat{x}, \hat{y}, \hat{z}, \hat{w}, \hat{w}'\}$ and $E(F_1) = \{\hat{u} \hat{x}, \hat{u} \hat{y}, \hat{u} \hat{z}, \hat{v} \hat{x}, \hat{v} \hat{y}, \hat{v} \hat{z}, \hat{y} \hat{w}, \hat{z} \hat{w}, \hat{w} \hat{w}'\}$. See Figure~\ref{Fig_F1}. The pairs $\hat{u}, \hat{v}$ and $\hat{y}, \hat{z}$ are open twins of degree~$3$ in $F_2$.

\item \emph{Graph $F_2$}: $V(F_2) = \{\hat{u},\hat{v},\hat{x},\hat{x}',\hat{y},\hat{y}',\hat{z}\}$ and $E(F_2) = \{\hat{u} \hat{x}, \hat{u} \hat{y}, \hat{u} \hat{z}, \hat{v} \hat{x}, \hat{v} \hat{y}, \hat{v} \hat{z}, \hat{y} \hat{y}', \hat{z} \hat{z}'\}$.  See Figure~\ref{Fig_F2}.

\item \emph{Graph $F_3$}: $V(F_3) = \{\hat{u},\hat{v},\hat{x},\hat{x}',\hat{y},\hat{z}\}$ and $E(F_3) = \{\hat{u} \hat{x}, \hat{u} \hat{y}, \hat{u} \hat{z}, \hat{v} \hat{x}, \hat{v} \hat{y}, \hat{v} \hat{z}, \hat{y} \hat{z}, \hat{x} \hat{x}'\}$. See Figure~\ref{Fig_F3}. The pair $\hat{u}, \hat{v}$ is open twins of degree~$3$ in $F_2$ and the pair $\hat{y}, \hat{z}$ is closed twins of degree~$3$ in $F_2$.

\item \emph{Graph $F_4$}: $V(F_4) = \{\hat{u},\hat{v}, \hat{w}, \hat{w}',\hat{x}, \hat{x}', \hat{y}, \hat{z}\}$ and $E(F_4) = \{\hat{u} \hat{x}, \hat{u} \hat{y}, \hat{u} \hat{z}, \hat{v} \hat{x}, \hat{v} \hat{y}, \hat{v} \hat{z}, \hat{w} \hat{y}, \hat{w} \hat{z}, \hat{w} \hat{w}',\\ \hat{x} \hat{x}'\}$. See Figure~\ref{Fig_F4}. 

\item \emph{Graph $F_5$}: $V(F_5) = \{\hat{u},\hat{v}, \hat{w}, \hat{x}, \hat{x}', \hat{y}, \hat{z}\}$ and $E(F_5) = \{\hat{u} \hat{x}, \hat{u} \hat{y}, \hat{u} \hat{z}, \hat{v} \hat{x}, \hat{v} \hat{y}, \hat{v} \hat{z}, \hat{w} \hat{y}, \hat{w} \hat{z}, \hat{x} \hat{x}', \hat{w} \hat{x}'\}$. See Figure~\ref{Fig_F5}. 

\item \emph{Graph $F_6$}: $V(F_6) = \{\hat{u},\hat{v}, \hat{w}, \hat{x}, \hat{x}', \hat{y}, \hat{y}', \hat{z}, \hat{z}'\}$ and $E(F_6) = \{\hat{u} \hat{x}, \hat{u} \hat{y}, \hat{u} \hat{z}, \hat{v} \hat{x}, \hat{v} \hat{y}, \hat{v} \hat{z}, \hat{x} \hat{x}', \hat{y} \hat{y}',\\ \hat{z} \hat{z}'\}$. See Figure~\ref{Fig_F6}. 
\end{itemize}

Let $G$ be a graph, $i \in [0,6]$ and let $U_i$ be a vertex subset of $F_i$. Moreover, let $H_i$ be an $(F_i;U_i)$-subgraph of $G$ under a homomorphism $j_i : V(F_i) \to V(H_i)$. Then we fix a certain naming convention for the vertices of $H_i$ and $F_i$ as follows. Firstly, we fix a set of $10$ symbols, namely $L = \{u,v,w,w',x,x',y,y',z,z'\}$. Then, as shown in Figure~\ref{Fig_F}, any vertex of $F_i$ will be denoted by the symbol $\hat{a} \in L$ for some $a \in L$. In addition, any vertex $j_i(\hat{a})$ of $H_i$, for some $a \in L$, will be denoted by the symbol $a$ (that is, by dropping the hat on the symbol $\hat{a}$). We shall call this the \emph{drop-hat naming convention} on $V(H_i)$. We hope that the conventions and the namings will become clearer to the reader as we proceed further with their usages.

%%%%%%%%%%%%%%%%%%%%%%%%%%%%%%%%%%%%%

\begin{theorem} \label{thm_deg3}
		Let $G$ be a connected subcubic graph on $m \ge 7$ edges, not isomorphic to $K_{3,3}$ and  without open twins of degree~$1$ or $2$. Then, we have
		\[
		\LD(G) \le \frac{n}{2}.
		\]
\end{theorem}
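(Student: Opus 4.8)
The plan is to argue with a minimal counterexample $G$, minimizing first the order $n$ and, among graphs of that order, the number of edges $m$. First I would dispose of the possibility that $G$ is open-twin-free: since $m\ge 7$ excludes both $K_3$ (three edges) and $K_4$ (six edges), Theorem~\ref{theclosedtwins} applies and yields $\LD(G)\le n/2$, contradicting the choice of $G$. Thus $G$ contains a pair of open twins, and because open twins of degree $1$ or $2$ are forbidden by hypothesis, every such pair has degree~$3$. Fixing one pair $\hat u,\hat v$ produces an $(F_0;\hat u,\hat v)$-subgraph; following the drop-hat convention I write $x,y,z$ for the three common neighbours of $u$ and $v$.

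The next step is the structural classification announced before the statement. The decisive local fact is that at most one of $x,y,z$ has degree~$2$: if two of them, say $x$ and $y$, had degree~$2$, then $N(x)=N(y)=\{u,v\}$, so $x,y$ would be open twins of degree~$2$, which is excluded. Hence at least two of $x,y,z$ have degree~$3$ and therefore have a neighbour outside $\{u,v\}$. I would then branch on (a) whether an edge is present inside $\{x,y,z\}$ (the triangle case, which produces an $F_3$-subgraph in which $y,z$ become closed twins), (b) whether the degree-$3$ vertices among $x,y,z$ share a common outside neighbour $w$ or have distinct outside neighbours, and (c) whether the remaining vertex $x$ has degree $2$ or $3$, refining further by the local adjacencies of the new vertices $w,w',x',y',z'$. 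The aim of this bookkeeping is to certify that $G$ always contains an $(F_i;U_i)$-subgraph for some $i\in[1,6]$, where $U_i$ records exactly the vertices whose full neighbourhood has been pinned down. The finitely many degenerate configurations, in which some required attachment vertex ($w'$, $x'$, $y'$ or $z'$) does not exist, force $G$ to be one of a short list of small graphs that I would verify directly. The graph $K_{3,3}$ is exactly the extremal configuration in which $x,y,z$ are pairwise open twins (all adjacent to $u,v$ and to a common third vertex), so that no $F_i$-reduction is available; this is why it must be excluded.

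Given an $(F_i;U_i)$-subgraph, for each $i$ I would perform a local reduction $G\rightsquigarrow G'$ that deletes a bounded gadget built from the twin pair and a few incident vertices or edges, chosen so that $G'$ is again connected, subcubic, free of open twins of degree $1$ or $2$, distinct from $K_{3,3}$, and either has $m'\ge 7$ or is one of the already-settled base graphs. By minimality $G'$ has an LD-set $S'$ with $|S'|\le n'/2$, which I would first normalize via Lemmas~\ref{LemLDSup} and~\ref{LemLDLeaves} so that it contains every support vertex and no leaf. The crucial accounting principle is that separating a pair of open twins forces at least one of the two twins into any LD-set (their $I$-sets coincide otherwise), so a twin pair costs exactly one vertex; quantitatively, deleting $k$ vertices leaves room to reinsert at most $k/2$, and each case is engineered to meet this budget with equality, reinserting one representative of the twin pair and, where necessary, one auxiliary vertex to redominate and reseparate the deleted vertices. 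In every case I would check, in the style of the separation arguments of Proposition~\ref{propSubcubic}, that the reinserted vertices dominate the deleted ones and that the $I$-sets of $u$, $v$ and any other deleted vertex are pairwise distinct and distinct from all surviving $I$-sets.

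The main obstacle I anticipate is twofold. First, preserving the exclusion of open twins of degree $1$ and $2$ under the reductions: deleting the twin pair lowers the degrees of $x,y,z$ and readily creates new low-degree twins. This is most delicate precisely in the cases $F_1$, $F_4$ and $F_5$, where $y$ and $z$ are themselves open twins of degree~$3$, so that both a deletion of $u,v$ and a deletion of the $w$-attachment threaten to create open twins of degree $1$ or~$2$; these cases demand a carefully tailored gadget and, likely, auxiliary subcase splits. Second, since the bound $n/2$ is attained there is no numerical slack, so the extension must be balanced exactly and the separation of the few deleted vertices must be verified against the normalized $S'$ with care. Carrying these two points out uniformly over $F_0$ together with $F_1,\dots,F_6$, alongside the finite list of degenerate small graphs, is where the bulk of the argument lies.
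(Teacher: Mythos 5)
Your plan follows the paper's proof essentially verbatim: a minimal counterexample, disposal of the open-twin-free case via Theorem~\ref{theclosedtwins}, classification of the neighbourhood of a degree-$3$ open twin pair into the subgraphs $F_0,\dots,F_6$ (with $K_{3,3}$ as the one unclassifiable configuration and a finite list of small degenerate graphs checked directly), followed by local reductions whose LD-sets are normalized via Lemmas~\ref{LemLDSup} and~\ref{LemLDLeaves} and transferred back using the fact that a twin pair forces one of its members into any LD-set. The only notable difference is that the paper's reductions are almost entirely edge deletions, so the vertex set and hence the $\frac{n}{2}$ budget are unchanged and no ``reinsert at most $k/2$ vertices'' accounting is needed; the normalized set $S'$ is reused in $G$ either directly or after a single one-for-one swap such as $(S'\setminus\{u\})\cup\{w\}$, which is also how the paper sidesteps most of the twin-recreation worries you raise for $F_1$, $F_4$ and $F_5$.
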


\begin{proof}
Let $n = n(G)$ be the number of vertices and $m = m(G)$ be the number of edges of the graph $G$. Since $m \ge 7$, the graph $G$ is not isomorphic to either $K_3$ or $K_4$. Therefore, if $G$ does not contain any open twins of degree~$3$, then the result holds by Theorem~\ref{theclosedtwins}. Hence, we assume from now on that $G$ has at least one pair of open twins of degree~$3$. In other words, $G$ has an $(F_0;\hat{u},\hat{v})$-subgraph. Let $\calG$ denote the set of all connected subcubic graphs without open twins of degrees~$1$ or $2$ and not isomorphic to $K_{3,3}$. Notice that in a subcubic graph not isomorphic to $K_{3,3}$, for each vertex $u$ that is a twin of degree 3, there exists exactly one other vertex $v$ with the same open neighbourhood. Let us assume that $G\in\calG$, $\LD(G)>\frac{n}{2}$ and among those graphs $G$ has the smallest number of edges $m\geq7$.

We next consider the graph $F'_3 = F_3 - \{\hat{x}'\} \in \calG$ that has one pair of open twins of degree~$3$, namely $\hat{u}$ and $\hat{v}$. Now, it can be verified that the set $S'_3 = \{\hat{v}, \hat{z}\}$ is an LD-set of $F'_3$ with $|S'_3| < \frac{n}{2}$. Notice that $F'_3$ is the smallest graph (with respect to the number of both vertices and edges) with open twins of degree~$3$ but without open twins of degree~$1$ or~$2$ other than $K_{3,3}$.

Since, by assumption, $G$ has an $(F_0;\hat{u},\hat{v})$-subgraph, say $H_0$, under an injective homomorphism $j_0 : V(F_0) \to V(G)$, applying the  drop-hat naming convention, the vertices $j_0(\hat{u})$, $j_0(\hat{v})$, $j_0(\hat{x})$, $j_0(\hat{y})$ and $j_0(\hat{z})$ of $H_0$ are called $u$, $v$, $x$, $y$ and $z$. Since $G$ does not have open twins of degree~$2$, at least two of the vertices in $\{x,y,z\}$ must have degree~$3$ in $G$. Therefore, without loss of generality, let us assume that $\deg_G(y) = \deg_G(z) = 3$. Then, let $x'$ (if it exists), $y'$ and $z'$ be the neighbours of $x$, $y$ and $z$, respectively, in $V(G) \setminus \{u,v\}$. Notice that we may possibly have $y' = z'$. Then the following two cases arise.

\setcounter{case}{0}

\begin{case}[$\deg_G(x) = 2$]
	In this case, if $y' = z$, or equivalently, $z' = y$, this implies that $yz \in E(G)$ and therefore, $y$ and $z$ are closed twins of degree~$3$ in $G$. Since $\deg_G(x) = 2$, this implies that the graph $G$ is determined on $5$ vertices such that $G \cong F_3'= F_3 - \{\hat{x}'\}$. As we have seen, $\LD(G)=2<\frac{n}{2}$ in this case. Hence this possibility does not arise as it contradicts our assumption that $\LD(G) > \frac{n}{2}$. However, the following two other possibilities may arise.
	\begin{enumerate}
		\item $y' = z' = w$. In this case, if $\deg_G(w) = 2$ as well, then the graph $G$ is determined on $6$ vertices such that $G \cong F_1 - \{\hat{w}'\}$. Now, it can be verified that the set $S = \{u,z,w\}$ is an LD-set of $G$ with $|S| = \frac{n}{2}$. This contradicts our assumption that $\LD(G) > \frac{n}{2}$. Hence, we must have $\deg_G(w) = 3$. So, let $w'$ be the neighbour of $w$ in $V(G) \setminus \{y,z\}$. Now, we cannot have $w' = x$, since $\deg_G(x) = 2$. This implies that $G$ has an $(F_1;\hat{u},\hat{v},\hat{w},\hat{x},\hat{y},\hat{z})$-subgraph.
		\item $y' \ne z'$. In this case, $G$ contains an $(F_2;\hat{u},\hat{v},\hat{x},\hat{y},\hat{z})$-subgraph.
	\end{enumerate}
\end{case}

\begin{case}[$\deg_G(x) = 3$]
	In this case, if $x'=y'=z'$, then the graph $G$ is isomorphic to $K_{3,3}$ contradicting our assumption. Hence, this possibility cannot arise. However, any two of $x'$, $y'$ and $z'$ could be equal. This implies the following possibilities.
	\begin{enumerate}
		\item $\{x,y,z\} \cap \{x',y',z'\} \ne \emptyset$. Without loss of generality, let us assume that $y'=z$, or equivalently, $y=z'$. This implies that $yz \in E(G)$ and hence, the graph $G$ has an $(F_3;\hat{u},\hat{v},\hat{x},\hat{y},\hat{z})$-subgraph.
		\item $\{x,y,z\} \cap \{x',y',z'\} = \emptyset$ and $|\{x',y',z'\}| =2$. Without loss of generality, let us assume that $y'=z'=w$, say. Observe that now $y$ and $z$ are open twins of degree 3. Hence, if $\deg_G(w)=2$, then by interchanging the names of the pairs $w,x$ and  by renaming $x'$ as $w'$ we end up in  $(F_1;\hat{u},\hat{v},\hat{w},\hat{x},\hat{y},\hat{z})$-subgraph of $G$ as in Case 1. Hence, (in the original notation) we let $\deg_G(w)=3$ and let $w'$ be the neighbour of $w$ in $V(G) \setminus \{u,v\}$. If $x' = w$, or equivalently, $w'=x$, it implies that $xw \in E(G)$. In other words, contrary to our assumption, $G$ is isomorphic to $K_{3,3}$. Hence, we must have $\{w,x\} \cap \{w',x'\} = \emptyset$. Now, if $x' \ne w'$, then $H$ is an $(F_4;\hat{u},\hat{v},\hat{w},\hat{x},\hat{y},\hat{z})$-subgraph; and if $x'=w'$, then $H$ is an $(F_5;\hat{u},\hat{v},\hat{w},\hat{x},\hat{y},\hat{z})$-subgraph. Hence, with this possibility, the graph $G$ either has an $(F_4;\hat{u},\hat{v},\hat{w},\hat{x},\hat{y},\hat{z})$-subgraph or an $(F_5;\hat{u},\hat{v},\hat{w},\hat{x},\hat{y},\hat{z})$-subgraph.
		\item $\{x,y,z\} \cap \{x',y',z'\} = \emptyset$ and $|\{x',y',z'\}| = 3$. In this case, there is an 
		 $(F_6;\hat{u},\hat{v},\hat{x},\hat{y},\hat{z})$-subgraph in the graph $G$.
	\end{enumerate}
\end{case}

We prove the theorem by showing in the next claims that none of the above possibilities can arise thus, arriving at a contradiction. First we show that Possibility 1 of Case 1 cannot arise.

\begin{claim}
	$G$ has no $(F_1;\hat{u},\hat{v},\hat{w},\hat{x},\hat{y},\hat{z})$-subgraphs.
\end{claim}

\noindent \emph{Proof of Claim 1.} On the contrary, let us suppose that $G$ contains an $(F_1;\hat{u},\hat{v},\hat{w},\hat{x},\hat{y},\hat{z})$-subgraph, say $H_1$. Therefore, by applying the drop-hat naming convention on $V(H_1)$, the vertex $x$ is of degree~$2$ in $G$ and the pairs $u,v$ and $y,z$ are open twins of degree~$3$ in $G$. Let $D = \{ux\}$ and let $G' = G - D$. This implies that $G' \in \calG$. Moreover,  we have $8 \le m(G') < m(G)$ and hence, by the minimality of $G$, there exists an LD-set $S'$ of $G'$ such that $|S'| \le \frac{n}{2}$. We further notice that the vertex $x$ is a leaf with its support vertex $v$ in $G'$. Therefore, by Lemmas~\ref{LemLDSup} and~\ref{LemLDLeaves}, we assume that $v \in S'$ and $x \notin S'$. Now, if $u \notin S'$, then $S'$ is also an LD-set of $G$. 

Let us, therefore, assume that $u \in S'$. If, on the contrary, $S'$ is not an LD-set of $G$, it would mean that, in $G$, the vertex $x$ is not separated by $S'$ from some other vertex $p \in N_G(u)\cap N_G(v)\setminus\{S'\cup x'\} \subseteq \{y,z\}$. Now, since $y$ and $z$ are open twins in $G$ (hence, also in $G'$), it implies that $S' \cap \{y,z\} \ne \emptyset$. Without loss of generality, therefore, let us assume that $y \in S'$ and that the vertices $x,z$ are not separated by $S'$ in $G$. In this case, we claim that the set $S = (S' \setminus \{u\}) \cup \{w\}$ is an LD-set of $G$. The set $S$ is a dominating set of $G$ since each vertex in $N_G[u]$ has a neighbour in $\{v,y\} \subset S$. We therefore show that $S$ is also a separating set of $G$. In particular, $y$ separates $u$ from other vertices in $V(G)\setminus S$. While $v$ separates $x$ and $z$ from other vertices in $V(G)\setminus S$ and $w$ separates $x$ and $z$. Since $S'$ is an LD-set of $G'$, set $S$ is locating-dominating in  $G$. Moreover, $|S| = |S'| \le \frac{n}{2}$ contradicts our assumption that $\LD(G) > \frac{n}{2}$. Hence, this proves that $G$ cannot contain any $(F_1;\hat{u},\hat{v},\hat{w},\hat{x},\hat{y},\hat{z})$-subgraph. \hfill $\blacksquare$

\smallskip

Next, we show that Possibility 2 of Case 1 cannot arise.

\begin{claim}
	$G$ has no $(F_2;\hat{u},\hat{v},\hat{x},\hat{y},\hat{z})$-subgraphs.
\end{claim}

\noindent \emph{Proof of claim 2.} On the contrary,  suppose that $G$ contains an $(F_2;\hat{u},\hat{v},\hat{x},\hat{y},\hat{z})$-subgraph, say $H_2$. Applying the drop-hat naming convention on $V(H_2)$, the vertex $x$ has degree~$2$ in $G$ and the vertices $u,v$ are open twins of degree~$3$ in $G$. Now, if $\deg_G(y') = \deg_G(z') = 1$, then the graph $G$ is determined  to be isomorphic to $F_2$ on $n=7$ vertices. It can be checked in this case that the set $S = \{v,y,z\}$ is an LD-set of $G$ such that $|S| = 3 < \frac{n}{2}$. This contradicts our assumption that $\LD(G) > \frac{n}{2}$. Hence, we assume that at least one of $y'$ and $z'$ has degree of at least $2$ in $G$. In other words, $m \ge 9$. Let $D = \{ux\}$ and let $G' = G - D$. We have $G' \in \calG$. Moreover, $8 \le m(G') < m$ and hence, by the minimality of $G$, there exists an LD-set $S'$ of $G'$ such that $|S'| \le \frac{n}{2}$. We further notice that the vertex $x$ is a leaf with its support vertex $v$ in $G'$. Therefore, by Lemmas~\ref{LemLDSup} and~\ref{LemLDLeaves}, we assume that $v \in S'$ and $x \notin S'$. Now, if $u \notin S'$, then $S'$ is also an LD-set of $G$. 

Let us assume that $u \in S'$. Now, if $S'$ is not an LD-set of $G$, then, in $G$, the vertex $x$ is not separated by $S'$ from some other vertex $p \in N_G(u)\cap N_G(v) \setminus (S' \cup \{x\})\subseteq\{y,z\}$. Now, in order for $S'$ to separate the pair $y,z$ in $G'$, we must have  $\{y,y'\} \cap S' \ne \emptyset$ or $\{z,z'\} \cap S' \ne \emptyset$. Therefore, without loss of generality, let us assume that $\{y,y'\} \cap S' \ne \emptyset$. This implies that $p = z$, that is, the vertices $x,z$ are not separated by $S'$ in $G$. In particular, therefore, we have $z \notin S'$. We now claim that the set $S = (S' \setminus \{u\}) \cup \{z\}$ is an LD-set of $G$. The set $S$ is a dominating set of $G$ since each vertex in $N_G[u]$ has a neighbour in $\{v,z\} \subset S$. Therefore, we show that $S$ is also a separating set of $G$. First of all, $I_G(S;u)=\{z\}$ is unique since $v\in S$ and $z'$ is dominated by some vertex in $S'$. Furthermore, $v$ separates $x$ from all other vertices except $y$. However, we had $\{y,y'\}\cap S'\neq \emptyset$. Thus, $y\in S$ or $y$ and $x$ are separated by $S$. Since $S'$ is locating-dominating in $G'$, also all other vertices in $V(G)\setminus S$ are pairwise separated. Therefore, $S$ is an LD-set of $G$ with $|S| = |S'| \le \frac{n}{2}$. This contradicts our assumption that $\LD(G) > \frac{n}{2}$. Hence, $G$ cannot not contain any $(F_2;\hat{u},\hat{v},\hat{x},\hat{y},\hat{z})$-subgraph. \hfill $\blacksquare$

\smallskip

Observe that together Claims 1 and 2 imply that Case 1 above is not possible and in particular $\deg_G(x)=3$. Therefore, graph $G$ cannot have an $(F_0;\hat{u},\hat{v},\hat{x})$-subgraph.

\begin{claim}
	$G$ has no $(F_3;\hat{u},\hat{v},\hat{x},\hat{y},\hat{z})$-subgraphs.
\end{claim}

\noindent \emph{Proof of Claim 3.} On the contrary, suppose that $G$ has an $(F_3;\hat{u},\hat{v},\hat{x},\hat{y},\hat{z})$-subgraph, say $H_3$. Applying the drop-hat naming convention on $V(H_3)$, the vertices $u$ and $v$ are open twins of degree~$3$ and $y$ and $z$ are closed twins of degree~$3$ in $G$.

We first show that either we can assume $m \ge 11$, or else, we end up with a contradiction. Let $G^* = G - \{u,v,x,y,z\}$. If $m \le 10$, then $m(G^*) \le 2$. In other words, $n(G^*) \le 3$. If $n(G^*) = 1$, that is, $V(G^*) = \{x'\}$, then the graph $G$ is determined on $n=6$ vertices to be isomorphic to $F_3$. Moreover, it can be verified that the set $S = \{x',v,z\}$ is an LD-set of $G$ such that $|S| = 3 = \frac{n}{2}$. This contradicts our assumption that $\LD(G) > \frac{n}{2}$. Hence, let us now assume that $n(G^*) = 2$ and that $V(G^*) = \{x',x''\}$. Then, we must have $x'x'' \in E(G)$ and, again, the graph $G$ is determined on $n=7$ vertices. In this case too, it can again be verified that the set $S = \{x',v,z\}$ is an LD-set of $G$ such that $|S| = 3 < \frac{n}{2}$. This again contradicts our assumption that $\LD(G) > \frac{n}{2}$. Hence, we now assume that $n(G^*) = 3$ and that $V(G^*) = \{x',x'',x'''\}$. If $x'x''' \in E(G)$, then in order for $x''$ and $x'''$ to not be open twins of degree~$1$ (since $G$ does not have open twins of degree~$1$), there must be an edge in $G^*$ other than $x'x''$ and $x'x'''$. Therefore, $m(G) \ge 11$ in this case. Thus, let $x'x''' \notin E(G)$ which implies that $x''x''' \in E(G)$. Then the graph $G$ is determined on $n=8$ vertices and it can  be verified that the set $S = \{x',x'',v,z\}$ is an LD-set of $G$ such that $|S| = 4 = \frac{n}{2}$. This again contradicts our assumption that $\LD(G) > \frac{n}{2}$. Hence, we may assume that $m \ge 11$.

Now, let $D = \{ux,uz,yz\}$ and $G' = G-D$. Then, we have $G' \in \calG$. Moreover, $8 \le m(G') < m$ and hence, by the minimality of $G$, there exists an LD-set $S'$ of $G'$ such that $|S'| \le \frac{n}{2}$. Notice that the vertices $u$ and $z$ are leaves with support vertices $y$ and $v$, respectively, in the graph $G'$. Therefore, by Lemmas~\ref{LemLDSup} and~\ref{LemLDLeaves}, we assume that $v,y \in S'$ and that $u,z \notin S'$. We now claim that $S'$ is also an LD-set of $G$. To prove so, we only need to show that, in the graph $G$, the vertices $u$ and $z$ are separated by $S'$ from all vertices in $V(G) \setminus S'$. The vertex $y\in S'$ separates $u$ and $z$ from other vertices in $V(G) \setminus S'$ and vertex $v\in S'$ separates $u$ and $z$ from each other. Hence, $S'$ is an LD-set of $G$ with $|S'| \le \frac{n}{2}$. This contradicts our assumption that $\LD(G) > \frac{n}{2}$ and proves that $G$ has no $(F_3;\hat{u},\hat{v},\hat{x},\hat{y},\hat{z})$-subgraphs. \hfill $\blacksquare$

\smallskip

Following claim considers Case 2, Possibility 2.

\begin{claim}
	$G$ has neither $(F_4;\hat{u},\hat{v},\hat{w},\hat{x},\hat{y},\hat{z})$- nor $(F_5;\hat{u},\hat{v},\hat{w},\hat{x},\hat{y},\hat{z})$-subgraphs.
\end{claim}

\noindent \emph{Proof of Claim 4.} On the contrary, suppose that $G$ has an $(F_4;\hat{u},\hat{v},\hat{w},\hat{x},\hat{y},\hat{z})$-subgraph, say $H_4$. Then, in the drop-hat naming convention on $V(H_4)$, the pairs $u,v$ and $y,z$ are open twins of degree~$3$ in $G$. Now, if $\deg_G(w') = \deg_G(x') = 1$, then the graph $G$ is determined on $n=8$ vertices to be isomorphic to $F_4$. In this case, it can be verified that the set $S = \{v,w,x,y\}$ is an LD-set of $G$ with $|S| = 4 = \frac{n}{2}$. Hence, we may assume that the degree of $w'$ or $x'$ is at least $2$ in $G$. Therefore, we have $m \ge 11$. Now, let $D = \{ux,uy,wy\}$ and $G' = G-D$. Then, we have $G' \in \calG$. Moreover, $8 \le m(G') < m$ and hence, by the minimality of $G$, there exists an LD-set $S'$ of $G'$ such that $|S'| \le \frac{n}{2}$. 

We remark that the following arguments will also be used in the case of $(F_5;\hat{u},\hat{v},\hat{w},\hat{x},\hat{y},\hat{z})$-subgraph, that is when $w'=x'$, in the following paragraph. Notice that the vertices $u$ and $y$ are leaves with support vertices $z$ and $v$, respectively, in the graph $G'$. Hence, by Lemmas~\ref{LemLDSup} and~\ref{LemLDLeaves}, we assume that $v,z \in S'$ and that $u,y \notin S'$. We now claim that $S'$ is also an LD-set of $G$. To prove so, we only need to show that, in $G$, the vertices $u$ and $y$ are separated by $S'$ from all other vertices in $V(G) \setminus S'$. First of all, $z$ separates $u$ from each other vertex in $V(G) \setminus S'$ except possibly $w$. However, since $S'$ is locating-dominating in $G'$, we have $\{w,w'\}\cap S'\neq \emptyset$ and $w'$ separates $u$ and $w$. Similarly, $v$ separates $y$ from  $V(G) \setminus S'$ except possibly $x$. However, again either $x\in S'$ or $x'$ separates $y$ and $x$. This implies that is $S'$ an LD-set of $G$ with $|S'| \le \frac{n}{2}$. This contradicts our assumption that $\LD(G) > \frac{n}{2}$. Hence, $G$ cannot have an $(F_4;\hat{u},\hat{v},\hat{w},\hat{x},\hat{y},\hat{z})$-subgraph.

Again, on the contrary, suppose that $G$ has an $(F_5;\hat{u},\hat{v},\hat{w},\hat{x},\hat{y},\hat{z})$-subgraph, say $H_5$. Then, in the drop-hat naming convention on $V(H_5)$, the pairs $u,v$ and $y,z$ are open twins of degree~$3$ in $G$. Now, if $\deg_G(x') = 2$, then the graph $G$ is determined on $n=7$ vertices to be isomorphic to $F_5$. In this case, it can be verified that the set $S = \{v,x',y\}$ is an LD-set of $G$ with $|S| = 3 < \frac{n}{2}$. Hence, we may assume that $\deg_G(x') = 3$. Therefore, we have $m \ge 11$. Now, again let $D = \{ux,uy,wy\}$ and $G' = G-D$. Then again, $G' \in \calG$. Moreover, $8 \le m(G') < m$ and hence, $\LD(G') \leq \frac{n}{2}$. Notice that in the preceding arguments we did not require the case $w' = x'$ to be considered. This implies that by the exact same arguments as above, it can be shown that $G$ cannot have an $(F_5;\hat{u},\hat{v},\hat{w},\hat{x},\hat{y},\hat{z})$-subgraph. \hfill $\blacksquare$

\smallskip

Finally, we are left only with Possibility 3 of Case 2.

\begin{claim}
	$G$ has no $(F_6;\hat{u},\hat{v},\hat{x},\hat{y},\hat{z})$-subgraphs.
\end{claim}

\noindent \emph{Proof of Claim 5.} On the contrary, suppose that $G$ has an $(F_6;\hat{u},\hat{v},\hat{x},\hat{y},\hat{z})$-subgraph, say $H_6$. Then, applying the drop-hat naming convention on $V(H_6)$, the vertices $u$ and $v$ are open twins of degree~$3$ in $G$. Now, if all three of $x',y'$ and $z'$ are leaves in $G$, then the graph $G$ is determined on $n=8$ vertices and it can be verified that the set $S = \{u,x,y,z\}$ is an LD-set of $G$ with $|S| = \frac{n}{2}$. This contradicts our assumption that $\LD(G) > \frac{n}{2}$. Therefore, without loss of generality, let us assume that $\deg_G(z') \ge 2$ and that $N_G(z') \setminus \{z\} = \{z''\}$ if $\deg_G(z') = 2$ and $N_G(z') \setminus \{z\} = \{z'',z'''\}$ if $\deg_G(z') = 3$. In what follows, we simply assume that $\deg_G(z')=3$ and that $N_G(z') \setminus \{z\} = \{z'',z'''\}$, as the arguments remain intact even when $\deg_G(z') = 2$ and the vertex $z'''$ is absent.

\setcounter{case}{0}

\begin{case}[The graph $G' = G-\{zz'\}$ does not contain open twins of degree~$1$ or $2$]

		In this case, let $D = \{zz'\}$ and $G' = G-D$. Let $F'_{z'}$ be the component of $G'$ to which the vertex $z'$ belongs and let $F'_{z}$ be the component of $G'$ to which the vertex $z$ belongs. Notice that we may possibly have $F'_{z'} = F'_z$. Furthermore, we have $F'_z, F'_{z'}\in \calG$. Moreover, we have $8 \le m(F'_z) < m$ and hence, by the minimality of $G$, there exists an LD-set $S'_{z}$ of the component $F'_{z}$ such that $|S'_{z}| \le \frac{n'_z}{2}$, where $n'_z$ is the order of the component $F'_z$. For the component $F'_{z'}$, we will denote by $S'_{z'}$ its minimum-sized LD-set.
		
		\begin{subcase}[$F'_{z'} \ne F'_z$]
			
			We show that there exists an LD-set $S'_{z'}$ of the component $F'_{z'}$ such that $|S'_{z'}| \le \frac{n'_{z'}}{2}$, where $n'_{z'}$ is the order of the component $F'_{z'}$. Let us first assume that $F'_{z'}$ does not contain any open twins of degree~$3$. Here we show that $F'_{z'}$ cannot be isomorphic to either $K_3$ or $K_4$. First of all, we notice that $F'_{z'}$ is not isomorphic to $K_4$ since the latter is $3$-regular and $\deg_{F'_{z'}}(z') \le 2$. Let us, therefore, assume that $F'_{z'} \cong K_3$. Thus, let $V(F'_{z'}) = \{z',z'',z'''\}$. Then, we take $D' = \{z'z'',z'z'''\}$ and $G'' = G-D'$. Then, the vertex $z'$ is a leaf with support vertex $z$ in a component, say $F''$, of $G''$ which belongs to $\calG$ and with $9 \le m(F'') < m$. Hence, by the minimality of $G$, there exists an LD-set $S''$ of $G''$ such that $|S''| \le \frac{n}{2}-1$. By Lemmas~\ref{LemLDSup} and~\ref{LemLDLeaves}, we assume that $z \in S''$ and $z' \notin S''$. Then, it can be verified that the set $S = S'' \cup \{z''\}$ is an LD-set of $G$ with $|S| \le \frac{n}{2}$. This implies a contradiction to our assumption that $\LD(G) > \frac{n}{2}$. Hence, $F'_{z'}$ is not isomorphic to $K_3$ either. This implies, by Theorem~\ref{theclosedtwins}, that there exists an LD-set $S'_{z'}$ of the component $F'_{z'}$ such that $|S'_{z'}| \le \frac{n'_{z'}}{2}$. Let us then assume that $F'_{z'}$ has a pair of open twins of degree~$3$. Since we have restricted the possible $(F;U)$-subgraphs of $G$ in previous claims, the component  $F'_{z'}$ together with vertex $z$ and edge $zz'$ must contain an $(F_6;\hat{u},\hat{v},\hat{x},\hat{y},\hat{z})$-subgraph. This implies that $8 \le m(F'_{z'}) < m$ and hence, by the minimality of $G$, there exists an LD-set $S'_{z'}$ of the component $F'_{z'}$ such that $|S'_{z'}| \le \frac{n'_{z'}}{2}$.
			
			We now claim that the set $S = S'_{z'} \cup S'_z$ is an LD-set of $G$. It can be verified that $S$ is a dominating set of $G$. To prove that $S$ is also a separating set of $G$, we only need to show that the vertex $z$ is separated by $S$ from all vertices in $\{z',z'',z'''\} \setminus S$ and the vertex $z'$ is separated by $S$ from the vertices in $\{u,v\} \setminus S$. However, the first of these claims is true due to the fact that $\{u,v\} \cap S'_z \ne \emptyset$ since $u$ and $v$ are open twins in the component $F'_z$; and second one holds by the fact that $\{z',z'',z'''\} \cap S'_{z'} \ne \emptyset$ in order for $S'_{z'}$ to dominate $z'$. Hence, $S$ is, indeed, an LD-set of $G$. Moreover, $|S| = |S'_{z'}| + |S'_z| \le \frac{n'_{z'}}{2} + \frac{n'_z}{2} = \frac{n}{2}$ contradicts our assumption that $\LD(G) > \frac{n}{2}$. \hfill $\blacktriangleleft$
		\end{subcase}
		
		\begin{subcase}[$F'_{z'} = F'_z = G'$]
			Let $S'_{z'} = S'_z = S'$. In this case, the set $S'$ is an LD-set of $G$ if either both $z,z' \in S'$ or both $z,z' \notin S'$. Since $u$ and $v$ are open twins in $G'$, it implies that $\{u,v\} \cap S' \ne \emptyset$. Therefore, without loss of generality, let us assume that $v \in S'$. Let us first assume that $z \in S'$ and $z' \notin S'$. Now, if on the contrary, $S'$ is not an LD-set of $G$, it implies that, the vertex $z'$ is not separated by $S$ from a vertex $p \in N_G(z) \setminus (S' \cup \{z'\})=\{u\}$ in the graph $G$. Therefore, we must have $\{z'',z'''\} \cap S' \ne \emptyset$ in order for $S'$ to dominate the vertex $z'$. Without loss of generality, let us assume that $z'' \in S'$. Since $z'$ and $u$ are not separated, we have $z''\in \{x,y\}$. Thus, $z'$ is adjacent to $x$ or $y$, contradicting the structure implied by $(F_6;\hat{u},\hat{v},\hat{x},\hat{y},\hat{z})$-subgraph. Thus, $S$ separates the pair $u,z'$ and is thus an LD-set of $G$ if $z \in S'$ and $z' \notin S'$.
			
			Let us next assume that $z' \in S'$ and $z \notin S'$. Again, let us assume on the contrary that $S'$ is not an LD-set of $G$. Recall that $v\in S'$. This implies that $S'$ does not separate the vertex $z$ and another vertex $p \in (N_G(v)\cap N_G(z')) \setminus (S' \cup \{z\})$. However, since $z'$ is not adjacent to $x$ or $y$, we have $N_G(v)\cap N_G(z')=\{z\}$. Hence, we cannot select $p$. This implies that $S$ is an LD-set of $G$ also if $z' \in S'$ and $z \notin S'$. Moreover, $|S'| \le \frac{n}{2}$ contradicts our assumption that $\LD(G) > \frac{n}{2}$. \hfill $\blacktriangleleft$
		\end{subcase}
		
		Hence, $G$ has no $(F_6;\hat{u},\hat{v},\hat{x},\hat{y},\hat{z})$-subgraphs in this case.
	\end{case}
	
	\begin{case}[The graph $G' = G-\{zz'\}$ has open twins of degree~$1$ or $2$]
	
		In this case, the vertex $z'$ is an open twin of degree~$1$ or $2$ with some vertex, say $z^*$, such that $N_G(z') \setminus \{z\} = N_G(z^*) \subseteq \{z'',z'''\}$. Therefore, we have $1 \le \deg_G(z^*) \le 2$. Let $D = \{z'z'',z'z'''\}$ and $G^* = G-D$. Moreover, let $F'_{z''}$ be the component of $G^*$ to which the vertex $z''$ belongs and let $F'_{z}$ be the component of $G^*$ to which the vertex $z$ (and also $z'$) belongs. Notice that we may possibly have $F'_{z''} = F'_z$.  Further notice that $F'_z,F'_{z''} \in \calG$. Indeed, the only candidates for open twins of degrees 1 or 2 are $z',z''$ and $z'''$. However, $z'$ is a leaf adjacent to $z$ which does not have other adjacent leaves. Moreover, $z^*$ is a neighbour only to the vertices 
		$z''$ and $z'''$ in $F'_{z'}$. This implies that neither $z''$ nor $z'''$ is an open twin with any vertices in $V(F'_{z''}) \setminus \{z'',z'''\}$. Furthermore, $z''$ and $z'''$ cannot be open twins of degree 1 since then they would be open twins of degree two in $G$. Finally, they cannot be open twins of degree 2 in $G^*$, since then they would be open twins of degree 3 in $G$ adjacent to vertex $z^*$ of degree 2 contradicting Claim 1 or 2. We denote by $S'_z$ and $S''_{z''}$ a minimum-sized locating-dominating set of $F'_z$ and $F'_{z''}$, respectively.
		
		\begin{subcase}[$F'_{z''} \ne F'_z$]
		
			To begin with, the component $F'_z$ of $G^*$ belongs to $\calG$. Moreover, we have $9 \le m(F'_z) < m$ and hence, by the minimality of $G$, we have $|S'_{z}| \le \frac{n'_z}{2}$, where $n'_z$ is the order of the component $F'_z$.

			We next show that $|S'_{z''}| \le \frac{n'_{z''}}{2}$, where $n'_{z''}$ is the order of the component $F'_{z''}$. First of all, the component $F'_{z''} \in \calG$. Let us first assume that $F'_{z''}$ does not contain any open twins of degree~$3$. Here we show that $F'_{z''}$ cannot be isomorphic to either $K_3$ or $K_4$. We notice that $F'_{z''}$ is not isomorphic to $K_4$ since the latter is $3$-regular and $\deg_{F'_{z''}}(z'') \le 2$. Let us, therefore, assume that $F'_{z''} \cong K_3$. Thus, $V(F'_{z''}) = \{z^*,z'',z'''\}$ and $z''z''' \in E(F'_{z''})$. Then, we take $D' = \{z''z^*,z''z'''\}$ and $G'' = G-D'$. Then, the graph $G'' \in \calG$ with $12 \le m(G'') < m$ and hence, by the minimality of $G$, there exists an LD-set $S''$ of $G''$ such that $|S''| \le \frac{n}{2}$. Moreover, notice that the vertices $z''$ and $z^*$ are leaves with support vertices $z'$ and $z'''$, respectively, in $G''$. Therefore, by Lemmas~\ref{LemLDSup} and~\ref{LemLDLeaves}, we assume that $z',z''' \in S''$ and $z'',z^* \notin S''$. Then, the set $S''$ is also an LD-set of $G$ since $z''$ is the only vertex in $V(G) \setminus S''$ with the $I$-set $I_{G}(S'';z'')=\{z',z'''\}$. Moreover, $|S''| \le \frac{n}{2}$ implies a contradiction to our assumption that $\LD(G) > \frac{n}{2}$. Hence, $F'_{z''}$ is not isomorphic to $K_3$ either. This implies, by Theorem~\ref{theclosedtwins}, that there exists an LD-set $S'_{z''}$ of the component $F'_{z''}$ such that $|S'_{z''}| \le \frac{n'_{z''}}{2}$. 
			
			Let us then assume that $F'_{z''}$ has a pair of open twins of degree~$3$. Then by the previous claims together with the fact that neither $z'',z'''$ nor $z^*$ can be open twins of degree $3$ in $G^*$, the component $F'_{z''}$ must contain an $(F_6;\hat{u},\hat{v},\hat{x},\hat{y},\hat{z})$-subgraph. This implies that $8 \le m(F'_{z''}) < m$ and hence, by the minimality of $G$, there exists an LD-set $S'_{z''}$ of the component $F'_{z''}$ such that $|S'_{z''}| \le \frac{n'_{z''}}{2}$.
			
			We now claim that the set $S = S'_{z''} \cup S'_z$ is an LD-set of $G$. It can be verified that $S$ is a dominating set of $G$. We notice that $z'$ is a leaf with support vertex $z$ in the component $F'_z$. Therefore, by Lemmas~\ref{LemLDSup} and~\ref{LemLDLeaves}, we have $z \in S$ and $z' \notin S$. Thus, to show that $S$ is also a separating set of $G$, we only need to show that the vertex $z'$ is separated by $S$ from all vertices in $(N_G[z''] \cup N_G[z''']) \setminus S$. However, this is true due to the fact that $z \in S$. Hence, $S$ is, indeed, an LD-set of $G$. Moreover, $|S| = |S'_{z''}| + |S'_z| \le \frac{n'_{z''}}{2} + \frac{n'_z}{2} \le \frac{n}{2}$ contradicts our assumption that $\LD(G) > \frac{n}{2}$. \hfill $\blacktriangleleft$
		\end{subcase}
		
		\begin{subcase}[$F'_{z'} = F'_z = G^*$]
			Then, let $S'_{z''} = S'_z = S'$. By Lemmas~\ref{LemLDSup} and~\ref{LemLDLeaves}, we have $z \in S$ and $z' \notin S$. Hence, the set $S'$ is an LD-set of $G$ if both $z'',z''' \notin S'$. Therefore, without loss of generality, let us assume that $z'' \in S'$. Now, if on the contrary, $S'$ is not an LD-set of $G$, it implies that the vertex $z'$ is not separated by $S$ from a vertex $p \in (N_G(z'')\cap N_G(z)) \setminus  \{z'\}=\emptyset$ in the graph $G$. Since $p$ cannot exist, $S'$ is an LD-set of $G$ with $|S'| \le \frac{n}{2}$ which contradicts our assumption that $\LD(G) > \frac{n}{2}$. \hfill $\blacktriangleleft$
		\end{subcase}
		
		Therefore, $G$ has no $(F_6;\hat{u},\hat{v},\hat{x},\hat{y},\hat{z})$-subgraphs in this case as well.
	\end{case}
	
	This proves the claim that $G$ has no $(F_6;\hat{u},\hat{v},\hat{x},\hat{y},\hat{z})$-subgraphs exhausting all possibilities in Cases 1 and 2. \hfill $\blacksquare$
	
	\medskip
	
	This concludes the proof.
\end{proof}

In particular, our results imply that we may extend Conjecture~\ref{conj_Garijo} to all cubic graphs with the exception of $K_4$ and $K_{3,3}$.

\begin{corollary}\label{corCubic}
Let $G$ be a connected cubic graph other than $K_4$ or $K_{3,3}$. Then, $\LD(G)\leq \frac{n}{2}$.
\end{corollary}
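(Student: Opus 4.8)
The plan is to deduce Corollary~\ref{corCubic} as an immediate consequence of Theorem~\ref{thm_deg3}, since the hypotheses of that theorem are automatically met by every connected cubic graph apart from the two listed exceptions. First I would record the two structural facts about cubic graphs that make this work. A cubic graph is in particular subcubic, and by the handshake lemma its order $n$ must be even; the unique connected cubic graph on $4$ vertices is $K_4$, so every connected cubic graph other than $K_4$ has $n \geq 6$ and hence $m = \frac{3n}{2} \geq 9 \geq 7$. Thus the edge-count hypothesis $m \geq 7$ of Theorem~\ref{thm_deg3} is satisfied as soon as we exclude $K_4$.

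Second, I would observe that a cubic graph can have no open twins of degree~$1$ or~$2$: if $N(u) = N(v)$, then $\deg(u) = \deg(v)$, and in a cubic graph this common degree equals~$3$, so every pair of open twins (if any exist) has degree exactly~$3$. Combined with the standing hypothesis $G \not\cong K_{3,3}$, this shows that a connected cubic graph $G$ other than $K_4$ or $K_{3,3}$ meets all the requirements of Theorem~\ref{thm_deg3}: it is connected and subcubic, it has $m \geq 7$ edges, it is not isomorphic to $K_{3,3}$, and it has no open twins of degree~$1$ or~$2$. Applying Theorem~\ref{thm_deg3} then yields $\LD(G) \leq \frac{n}{2}$ directly, completing the proof.

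There is essentially no genuine obstacle at the level of the corollary itself: all the difficulty has already been absorbed into Theorem~\ref{thm_deg3} (and, through it, into Theorem~\ref{theclosedtwins} and Proposition~\ref{propSubcubic}), which handle the closed twins and the degree-$3$ open twins that a cubic graph may contain. The only point warranting an explicit check is the exclusion of $K_4$, which is the sole connected cubic graph small enough to fall below the $m \geq 7$ threshold and is therefore precisely why it must be named as an exception alongside $K_{3,3}$. No separate argument for the $K_{3,3}$ case is required, as that graph is simply forbidden by hypothesis in both the corollary and the theorem.
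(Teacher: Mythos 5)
Your proposal is correct and matches the paper's intent exactly: the paper derives Corollary~\ref{corCubic} as an immediate consequence of Theorem~\ref{thm_deg3}, and your verification of its hypotheses (cubic implies subcubic with all open twins of degree~$3$, and excluding $K_4$ forces $n\ge 6$ hence $m=\tfrac{3n}{2}\ge 9\ge 7$) is precisely the routine check the paper leaves implicit.
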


%%%%%%%%%%%%%%%%%%%%%%%%%%%%%%%%%%%

\section{Examples}\label{SecExample}

In this section, we consider some constructions which show the tightness of our results. First we show that we cannot further relax the twin-conditions in Theorem~\ref{thm_deg3} for subcubic graphs.

\begin{proposition}\label{PropOpentwinEx}
There exists an infinite family of connected subcubic graphs with \begin{enumerate}
    \item  open twins of degree 1; 
    \item open twins of degree 2,
\end{enumerate} which have location-domination number over half of their order.
\end{proposition}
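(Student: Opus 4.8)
The plan is to exhibit, for each part, an explicit infinite family of connected subcubic graphs and to bound their location-domination numbers from below by a short counting argument that is finished off by a single unavoidable $I$-set collision. The one tool I need throughout is the \emph{twin-forcing principle}: if $x,y$ are open twins and both lie outside an LD-set $S$, then $I(x)=N(x)\cap S=N(y)\cap S=I(y)$, so they are not separated; hence every open-twin pair meets $S$. The subtlety (see the obstacle below) is that twin-forcing alone only yields the bound $\LD\ge n/2$ with \emph{equality}, so each family must be engineered so that equality is impossible.

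For part~(1) I would use the comb $G_k$ ($k\ge 1$): a backbone path $c_1c_2\cdots c_k$ together with, for every $i$, a support vertex $s_i$ adjacent to $c_i$ and two leaves $\ell_i,\ell_i'$ adjacent to $s_i$, so that $\ell_i,\ell_i'$ are open twins of degree~$1$. This graph is connected, subcubic, and has order $n=4k$. Given an LD-set $S$, set $g_i=|S\cap\{s_i,\ell_i,\ell_i'\}|$. Twin-forcing gives $|S\cap\{\ell_i,\ell_i'\}|\ge1$; moreover, if only one of $\ell_i,\ell_i'$ lies in $S$, the excluded leaf has the unique possible dominator $s_i$, forcing $s_i\in S$. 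Thus $g_i\ge2$ in every case, whence $|S|\ge\sum_i g_i\ge 2k=n/2$. If equality held, then no $c_i$ would lie in $S$ and every $g_i=2$; but then $c_i$ (whose neighbours are $c_{i\pm1}\notin S$ and $s_i$) must be dominated by $s_i\in S$, forcing exactly one excluded leaf $\ell_i'$ with $I(\ell_i')=\{s_i\}=I(c_i)$, a collision. Hence $\LD(G_k)\ge 2k+1>n/2$.

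For part~(2) I would replace each twin-leaf pair by a degree-$2$ twin pair carried on a $4$-cycle, and add one pendant leaf to pin down a neighbour. Concretely, take $k$ copies of a $4$-cycle $a_iu_ib_iv_i a_i$, so $u_i,v_i$ are open twins of degree~$2$ with $N(u_i)=N(v_i)=\{a_i,b_i\}$; link consecutive copies by the edges $b_ia_{i+1}$ ($1\le i\le k-1$), and attach a single pendant leaf $p$ to $a_1$. The result $H_k$ is connected, subcubic, has order $n=4k+1$, contains the degree-$2$ open twins $u_i,v_i$, and has no open twins of degree~$1$ (its only leaf is $p$). By Lemmas~\ref{LemLDSup} and~\ref{LemLDLeaves} we may fix an optimal LD-set $S$ with $a_1\in S$ and $p\notin S$. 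Writing $g_i=|S\cap\{a_i,b_i,u_i,v_i\}|$, twin-forcing together with domination of the excluded twin (whose only dominators are $a_i,b_i$) gives $g_i\ge2$, so $|S|=\sum_i g_i\ge 2k=\lfloor n/2\rfloor$. If $|S|=2k$, then $g_1=2$; since $a_1\in S$ and a twin of the first pair must be in $S$, we get $S\cap\{a_1,b_1,u_1,v_1\}=\{a_1,u_1\}$ (up to symmetry), so $b_1,v_1\notin S$ and $I(v_1)=\{a_1\}=I(p)$, a collision. Therefore $\LD(H_k)\ge 2k+1>\tfrac{4k+1}{2}=n/2$, and $\{H_k\}_{k\ge1}$ is the desired family.

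The main obstacle, and the reason both constructions look slightly contrived, is precisely that a bare chain of twin-gadgets is \emph{too efficient}: every group of four gadget-vertices can be located and dominated with exactly two solution vertices, so such chains meet $\LD=n/2$ exactly rather than exceeding it. The strict excess must therefore come from one rigid $I$-set collision between an excluded twin and a forced neighbour, and the delicate point is guaranteeing this collision cannot be repaired within the budget. For degree~$1$ this is automatic because a leaf has a unique dominator; for degree~$2$ an excluded twin has two potential dominators, so the collision would ordinarily be cheap to break — which is exactly why the pendant leaf $p$ is inserted, since via Lemmas~\ref{LemLDSup} and~\ref{LemLDLeaves} it pins $a_1$ into every optimal solution and makes the collision $I(v_1)=I(p)=\{a_1\}$ unavoidable. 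Verifying that these local collisions cannot be amortized away globally (the pitfall that defeats the naive ``one gadget per backbone vertex'' variant) is the step I would check most carefully.
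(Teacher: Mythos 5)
Your constructions and lower-bound arguments are correct, but they are genuinely different from the paper's. For degree-$1$ twins the paper attaches a pendant path $a_ib_ic_i$ (joined at its centre $b_i$) to every vertex of a $3k$-vertex backbone path, obtaining $n=12k$ and $\LD(G)=\tfrac{7}{12}n$ by a pure counting argument: two forced vertices per gadget plus a dominating set of the backbone. For degree-$2$ twins it uses $11$-vertex gadgets hung on a $5k$-vertex path, obtaining $n=60k$ and $\LD(G)=\tfrac{8}{15}n$. Your comb and your chain of $4$-cycles with one pendant leaf are smaller and simpler, and your verification (the twin-forcing count $g_i\ge 2$ followed by an equality analysis ending in a single unavoidable $I$-set collision) is sound — including the appeal to Lemmas~\ref{LemLDSup} and~\ref{LemLDLeaves} to pin $a_1$ into an optimal solution in part~(2), which matches how the paper itself uses those lemmas. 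The trade-off is quantitative: your families exceed $n/2$ only by an additive constant ($+1$, resp.\ $+\tfrac12$), which suffices for the literal statement, whereas the paper's families exceed $n/2$ by a constant fraction of $n$, a more robust bound (no equality-case analysis needed) that says more about how badly the $n/2$ bound fails once open twins of degree $1$ or $2$ are permitted. Both routes prove the proposition.
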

\begin{proof}
Consider first Claim 1). Let $G$ be a connected subcubic graph on $n=12k$ vertices, for $k\geq1$, as in Figure~\ref{Fig_deg1opentwins}. The graph consists of a path $P_v$ on $3k$ vertices named $v_1,\dots,v_{3k}$. Furthermore, to each vertex $v_i$ we attach a three vertex path on vertices $a_i,b_i,c_i$ from the middle vertex $b_i$. Let us construct a minimum size locating-dominating set $S$ for $G$. Since each $a_i$ and $c_i$ are open twins, we may assume without loss of generality that each $c_i\in S$. Furthermore, since each $a_i$ needs to be dominated, at least one of the vertices $a_i,b_i\in S$. If $b_i\in S$ and $a_i\not\in S$, then one of the vertices $v_{i-1},v_i,v_{i+1}$ is in $S$ to separate $v_i$ and $a_i$. If $b_i\not\in S$, then we still require one of the three vertices to dominate $v_i$. In other words, a minimum size locating-dominating set contains a dominating set of $P_v$ which contains at least $k$ vertices. Hence, we have $\gamma^{LD}(G)=|S|\geq k+6k=\frac{7n}{12}$. Note that this %A value is 
lower bound is actually tight for this family of graphs as we obtain the value with the shaded vertices in  Figure~\ref{Fig_deg1opentwins} which can be verified to form a locating-dominating set.

%%%%%%%%%%%%%%%%%%%%%%%%%%%%%%%%%%%%

\begin{figure}[!htb]
\centering
\begin{tikzpicture}[
blacknode/.style={circle, draw=black!, fill=black!, thick},
whitenode/.style={circle, draw=black!, fill=white!, thick},
scale=0.5]
\tiny
%right
\node[whitenode] (0) at (0,0) {};
\node[blacknode] (1) at (3,0) {};
\node[whitenode] (2) at (6,0) {}; \node () at ($(2)+(0,-0.8)$) {\small $v_{i-1}$};
\node[whitenode] (3) at (9,0) {}; \node () at ($(3)+(0,-0.8)$) {\small $v_i$};
\node[blacknode] (4) at (12,0) {}; \node () at ($(4)+(0,-0.8)$) {\small $v_{i+1}$};
\node[whitenode] (5) at (15,0) {};
\node[whitenode] (6) at (18,0) {};
\node[blacknode] (7) at (21,0) {};
\node[blacknode] (0') at (0,2) {};
\node[blacknode] (0'') at (1,3) {};
\node[whitenode] (0''') at (-1,3) {};

\node[blacknode] (1') at (3,2) {};
\node[blacknode] (1'') at (4,3) {};
\node[whitenode] (1''') at (2,3) {};

\node[blacknode] (2') at (6,2) {}; \node () at ($(2')+(1.1,-0.1)$) {\small $b_{i-1}$};
\node[blacknode] (2'') at (7,3) {}; \node () at ($(2'')+(0.2,0.8)$) {\small $c_{i-1}$};
\node[whitenode] (2''') at (5,3) {}; \node () at ($(2''')+(0.8,0.4)$) {\small $a_{i-1}$};

\node[blacknode] (3') at (9,2) {}; \node () at ($(3')+(0.8,-0.1)$) {\small $b_i$};
\node[blacknode] (3'') at (10,3) {}; \node () at ($(3'')+(0,0.8)$) {\small $c_i$};
\node[whitenode] (3''') at (8,3) {}; \node () at ($(3''')+(0.5,0.4)$) {\small $a_i$};

\node[blacknode] (4') at (12,2) {}; \node () at ($(4')+(1.1,-0.1)$) {\small $b_{i+1}$};
\node[blacknode] (4'') at (13,3) {}; \node () at ($(4'')+(0.2,0.8)$) {\small $c_{i+1}$};
\node[whitenode] (4''') at (11,3) {}; \node () at ($(4''')+(0.8,0.4)$) {\small $a_{i+1}$};

\node[blacknode] (5') at (15,2) {};
\node[blacknode] (5'') at (16,3) {};
\node[whitenode] (5''') at (14,3) {};

\node[blacknode] (6') at (18,2) {};
\node[blacknode] (6'') at (19,3) {};
\node[whitenode] (6''') at (17,3) {};

\node[blacknode] (7') at (21,2) {};
\node[blacknode] (7'') at (22,3) {};
\node[whitenode] (7''') at (20,3) {};
%%
%right edges
\draw[-, thick] (0) -- (1);
\draw[-, thick] (1) -- (2);
\draw[-, thick] (2) -- (3);
\draw[-, thick] (3) -- (4);
\draw[-, thick] (4) -- (5);
\draw[-, thick] (5) -- (6);
\draw[-, thick] (6) -- (7);
\draw[dashed, thick] (7) -- (23,0);
\draw[dashed, thick] (0) -- (-2,0);
\draw[-, thick] (0) -- (0');
\draw[-, thick] (0') -- (0'');
\draw[-, thick] (0') -- (0''');

\draw[-, thick] (1) -- (1');
\draw[-, thick] (1') -- (1'');
\draw[-, thick] (1') -- (1''');

\draw[-, thick] (2) -- (2');
\draw[-, thick] (2') -- (2'');
\draw[-, thick] (2') -- (2''');

\draw[-, thick] (3) -- (3');
\draw[-, thick] (3') -- (3'');
\draw[-, thick] (3') -- (3''');

\draw[-, thick] (4) -- (4');
\draw[-, thick] (4') -- (4'');
\draw[-, thick] (4') -- (4''');

\draw[-, thick] (5) -- (5');
\draw[-, thick] (5') -- (5'');
\draw[-, thick] (5') -- (5''');

\draw[-, thick] (6) -- (6');
\draw[-, thick] (6') -- (6'');
\draw[-, thick] (6') -- (6''');

\draw[-, thick] (7) -- (7');
\draw[-, thick] (7') -- (7'');
\draw[-, thick] (7') -- (7''');
\end{tikzpicture}
\caption{Example of a subcubic graph $G$ on $n=12k$ vertices containing open twins of degree~$1$ and for which $\LD(G)=\frac{7}{12}n$. The shaded vertices constitute a minimum LD-set.}\label{Fig_deg1opentwins}
\end{figure}
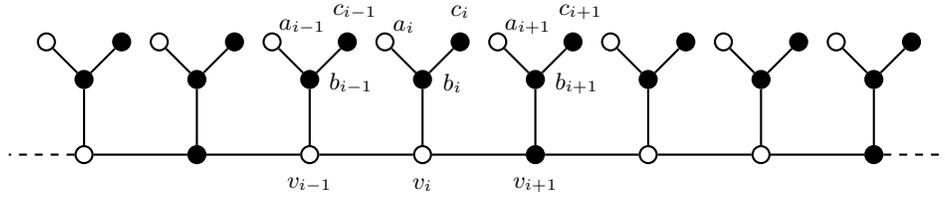

%%%%%%%%%%%%%%%%%%%%%%%%%%%%%%%%%

Consider next Claim 2).  Let $G$ be a connected subcubic graph on $n=60k$ vertices, for $k\geq1$, as in Figure~\ref{Fig_deg2opentwins}. The graph consists of a path $P_v$ on $5k$ vertices named $v_1,\dots,v_{5k}$ and to each vertex $v_i$, we connect an eleven vertex subgraph $G_i$ as in Figure~\ref{Fig_deg2opentwins}. Denote the vertex in $G_i$ which has an edge to $v_i$ by $u_i$. Figure~\ref{Fig_deg2opentwins} contains a minimum-sized locating-dominating set in shaded vertices. Notice that each subgraph $G_i$ has a pair of open twins of degree~$2$, we need to include one of them in any minimum-sized locating-dominating set $S$ of $G$. Furthermore, by Lemma~\ref{LemLDSup}, we may assume that the support vertex belongs to the set $S$. However, the support vertex itself is not enough to separate the leaf and the open-twin outside of $S$. Hence, $|S\cap (V(G_i)\setminus \{u_i\})|\geq 6$. Furthermore, these vertices do not dominate the vertices in the path $P_v$ which requires $2k$ vertices in any locating-dominating set (see~\cite{slater1988dominating}). Hence, we require at least $2k$ vertices in the set $S\cap (V(P_v)\cup\bigcup_{i=1}^{5k} \{u_i\})$. Therefore, we have $|S|\geq 6\cdot 5k+2k=32k=\frac{8n}{15}>\frac{n}{2}$.
\end{proof}

%%%%%%%%%%%%%%%%%%%%%%%%%%%%%%%%%%%%%%

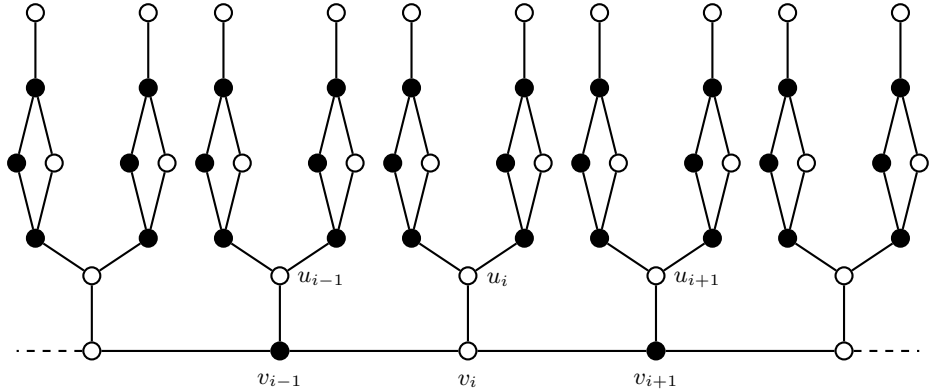
\begin{figure}[!htb]
\centering
\begin{tikzpicture}[
blacknode/.style={circle, draw=black!, fill=black!, thick},
whitenode/.style={circle, draw=black!, fill=white!, thick},
scale=0.5]
\tiny
%right
\node[whitenode] (0) at (0,0) {};
\node[blacknode] (1) at (5,0) {}; \node () at ($(1)+(0,-0.8)$) {\small $v_{i-1}$};
\node[whitenode] (2) at (10,0) {}; \node () at ($(2)+(0,-0.8)$) {\small $v_i$};
\node[blacknode] (3) at (15,0) {}; \node () at ($(3)+(0,-0.8)$) {\small $v_{i+1}$};
\node[whitenode] (4) at (20,0) {};

\node[whitenode] (0') at (0,2) {};
\node[blacknode] (0'') at (1.5,3) {};
\node[blacknode] (0''') at (-1.5,3) {};
\node[whitenode] (0*) at (2,5) {};
\node[blacknode] (0**) at (1,5) {};
\node[blacknode] (0***) at (1.5,7) {};
\node[whitenode] (0****) at (1.5,9) {};
\node[whitenode] (0") at (-1,5) {};
\node[blacknode] (0"") at (-2,5) {};
\node[blacknode] (0""") at (-1.5,7) {};
\node[whitenode] (0"""") at (-1.5,9) {};

\node[whitenode] (1') at (5,2) {}; \node () at ($(1')+(1.1,-0.1)$) {\small $u_{i-1}$};
\node[blacknode] (1'') at (6.5,3) {};
\node[blacknode] (1''') at (3.5,3) {};
\node[whitenode] (1*) at (7,5) {};
\node[blacknode] (1**) at (6,5) {};
\node[blacknode] (1***) at (6.5,7) {};
\node[whitenode] (1****) at (6.5,9) {};
\node[whitenode] (1") at (4,5) {};
\node[blacknode] (1"") at (3,5) {};
\node[blacknode] (1""") at (3.5,7) {};
\node[whitenode] (1"""") at (3.5,9) {};

\node[whitenode] (2') at (10,2) {}; \node () at ($(2')+(0.8,-0.1)$) {\small $u_i$};
\node[blacknode] (2'') at (11.5,3) {};
\node[blacknode] (2''') at (8.5,3) {};
\node[whitenode] (2*) at (12,5) {};
\node[blacknode] (2**) at (11,5) {};
\node[blacknode] (2***) at (11.5,7) {};
\node[whitenode] (2****) at (11.5,9) {};
\node[whitenode] (2") at (9,5) {};
\node[blacknode] (2"") at (8,5) {};
\node[blacknode] (2""") at (8.5,7) {};
\node[whitenode] (2"""") at (8.5,9) {};

\node[whitenode] (3') at (15,2) {}; \node () at ($(3')+(1.1,-0.1)$) {\small $u_{i+1}$};
\node[blacknode] (3'') at (16.5,3) {};
\node[blacknode] (3''') at (13.5,3) {};
\node[whitenode] (3*) at (17,5) {};
\node[blacknode] (3**) at (16,5) {};
\node[blacknode] (3***) at (16.5,7) {};
\node[whitenode] (3****) at (16.5,9) {};
\node[whitenode] (3") at (14,5) {};
\node[blacknode] (3"") at (13,5) {};
\node[blacknode] (3""") at (13.5,7) {};
\node[whitenode] (3"""") at (13.5,9) {};

\node[whitenode] (4') at (20,2) {};
\node[blacknode] (4'') at (21.5,3) {};
\node[blacknode] (4''') at (18.5,3) {};
\node[whitenode] (4*) at (22,5) {};
\node[blacknode] (4**) at (21,5) {};
\node[blacknode] (4***) at (21.5,7) {};
\node[whitenode] (4****) at (21.5,9) {};
\node[whitenode] (4") at (19,5) {};
\node[blacknode] (4"") at (18,5) {};
\node[blacknode] (4""") at (18.5,7) {};
\node[whitenode] (4"""") at (18.5,9) {};

%%
%right edges
\draw[-, thick] (0) -- (1);
\draw[-, thick] (1) -- (2);
\draw[-, thick] (2) -- (3);
\draw[-, thick] (3) -- (4);

\draw[dashed, thick] (4) -- (22,0);
\draw[dashed, thick] (0) -- (-2,0);
\draw[-, thick] (0) -- (0');
\draw[-, thick] (0') -- (0'');
\draw[-, thick] (0') -- (0''');
\draw[-, thick] (0'') -- (0*);
\draw[-, thick] (0'') -- (0**);
\draw[-, thick] (0*) -- (0***);
\draw[-, thick] (0**) -- (0***);
\draw[-, thick] (0***) -- (0****);
\draw[-, thick] (0''') -- (0");
\draw[-, thick] (0''') -- (0"");
\draw[-, thick] (0") -- (0""");
\draw[-, thick] (0"") -- (0""");
\draw[-, thick] (0""") -- (0"""");

\draw[-, thick] (1) -- (1');
\draw[-, thick] (1') -- (1'');
\draw[-, thick] (1') -- (1''');
\draw[-, thick] (1'') -- (1*);
\draw[-, thick] (1'') -- (1**);
\draw[-, thick] (1*) -- (1***);
\draw[-, thick] (1**) -- (1***);
\draw[-, thick] (1***) -- (1****);
\draw[-, thick] (1''') -- (1");
\draw[-, thick] (1''') -- (1"");
\draw[-, thick] (1") -- (1""");
\draw[-, thick] (1"") -- (1""");
\draw[-, thick] (1""") -- (1"""");

\draw[-, thick] (2) -- (2');
\draw[-, thick] (2') -- (2'');
\draw[-, thick] (2') -- (2''');
\draw[-, thick] (2'') -- (2*);
\draw[-, thick] (2'') -- (2**);
\draw[-, thick] (2*) -- (2***);
\draw[-, thick] (2**) -- (2***);
\draw[-, thick] (2***) -- (2****);
\draw[-, thick] (2''') -- (2");
\draw[-, thick] (2''') -- (2"");
\draw[-, thick] (2") -- (2""");
\draw[-, thick] (2"") -- (2""");
\draw[-, thick] (2""") -- (2"""");

\draw[-, thick] (3) -- (3');
\draw[-, thick] (3') -- (3'');
\draw[-, thick] (3') -- (3''');
\draw[-, thick] (3'') -- (3*);
\draw[-, thick] (3'') -- (3**);
\draw[-, thick] (3*) -- (3***);
\draw[-, thick] (3**) -- (3***);
\draw[-, thick] (3***) -- (3****);
\draw[-, thick] (3''') -- (3");
\draw[-, thick] (3''') -- (3"");
\draw[-, thick] (3") -- (3""");
\draw[-, thick] (3"") -- (3""");
\draw[-, thick] (3""") -- (3"""");

\draw[-, thick] (4) -- (4');
\draw[-, thick] (4') -- (4'');
\draw[-, thick] (4') -- (4''');
\draw[-, thick] (4'') -- (4*);
\draw[-, thick] (4'') -- (4**);
\draw[-, thick] (4*) -- (4***);
\draw[-, thick] (4**) -- (4***);
\draw[-, thick] (4***) -- (4****);
\draw[-, thick] (4''') -- (4");
\draw[-, thick] (4''') -- (4"");
\draw[-, thick] (4") -- (4""");
\draw[-, thick] (4"") -- (4""");
\draw[-, thick] (4""") -- (4"""");
\end{tikzpicture}
\caption{Example of a subcubic graph $G$ on $n=60k$ vertices containing open twins of degree~$2$ and for which $\LD(G)=\frac{8}{15}n$. The shaded vertices constitute a minimum LD-set.}\label{Fig_deg2opentwins}
\end{figure}

%%%%%%%%%%%%%%%%%%%%%%%%%%%%%%%%%%%%

The following proposition shows that the conjecture is not true in general for  $r$-regular graphs with twins. In other words, the result turns out to be a special property of cubic graphs.

\begin{proposition}\label{PropRegEx}
There exists an infinite family of connected $r$-regular graphs, for $r>3$, with closed twins which have location-domination number over half of their order.
\end{proposition}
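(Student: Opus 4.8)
The plan is to exhibit explicit regular graphs in which closed twins force any locating-dominating set to contain more than half of the vertices. The cleanest examples are the complete graphs themselves: for every $r \ge 4$ the graph $K_{r+1}$ is connected and $r$-regular, every pair of its vertices forms a pair of closed twins, and, as recalled in the introduction, $\LD(K_{r+1}) = (r+1)-1 = r > \frac{r+1}{2} = \frac{n}{2}$. Hence $\{K_{r+1} : r \ge 4\}$ is already an infinite family of connected regular graphs (of degree $r > 3$) possessing closed twins and violating the $\frac{n}{2}$-bound, which settles the statement. I would state this first, since it relies only on the value $\LD(K_n)=n-1$ quoted earlier.

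To obtain, in addition, an infinite family for each \emph{fixed} degree $r$, I would link cliques into a necklace. For fixed $r \ge 4$ and each $k \ge 1$, take $k$ disjoint copies $B_1,\ldots,B_k$ of $K_{r+1}$; in each $B_i$ delete one edge $a_i b_i$ and add the edges $a_i b_{i-1}$ and $b_i a_{i+1}$ (indices modulo $k$). Every vertex keeps degree $r$, the resulting graph $G_{r,k}$ is connected, and the $r-1$ vertices of $B_i$ other than $a_i,b_i$ form a closed-twin class $Q_i$, since each such vertex has closed neighbourhood exactly $V(B_i)$ (the deleted edge $a_ib_i$ is incident to neither). Thus $G_{r,k}$ has closed twins. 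Because two closed twins left outside a locating-dominating set receive equal $I$-sets, at least $r-2$ vertices of each $Q_i$ must lie in any locating-dominating set $S$, so $|S| \ge (r-2)k$. For $r \ge 6$ this already gives $\LD(G_{r,k}) \ge (r-2)k > \frac{(r+1)k}{2} = \frac{n}{2}$, because $2(r-2) > r+1$ exactly when $r>5$.

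The main obstacle is the lower bound in the two tight degrees $r \in \{4,5\}$, where $(r-2)k \le \frac{(r+1)k}{2}$ and pure twin-forcing no longer suffices. Here I would sharpen the count using the connector vertices. If a block $B_i$ contributes only the minimal $r-2$ twins to $S$ and neither of $a_i,b_i$, then the omitted twin of $Q_i$ and the vertices $a_i,b_i$ all have $I$-set equal to $Q_i \cap S$ within the block, so they cannot be pairwise separated unless the external neighbours $b_{i-1}$ and $a_{i+1}$ both lie in $S$; moreover the omitted twin, having no external neighbour, must then clash with whichever of $a_i,b_i$ is not rescued. I would formalise this as an averaging (discharging) argument around the cycle of blocks, charging each such separation obligation to a vertex of $S$ lying in an adjacent block, and thereby show that the number of vertices of $S$ beyond $(r-2)k$ is a positive fraction of $k$, forcing $|S|$ strictly above $\frac{n}{2}$. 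Ruling out a cheap periodic pattern that would satisfy all local separation constraints with too few extra vertices is the delicate point and the crux of the argument for $r=4$ and $r=5$.
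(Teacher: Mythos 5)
Your necklace construction is, up to relabelling, the paper's own: a cyclic chain of copies of $K_{r+1}$ with one edge $a_ib_i$ deleted is exactly their arrangement of $(r-1)$-cliques $Q_i$ of closed twins joined through two connectors $a_i,b_i$, and both of your key observations --- that $|S\cap Q_i|\ge r-2$ by twin-forcing, and that a block meeting $S$ in only these $r-2$ twins forces both external neighbours $b_{i-1}$ and $a_{i+1}$ into $S$ --- are correct and are precisely the two ingredients the paper uses. (One small slip: if $b_{i-1},a_{i+1}\in S$ then the omitted twin $c_i$ has $I$-set $Q_i\cap S$ while $a_i$ and $b_i$ have $I$-sets $(Q_i\cap S)\cup\{b_{i-1}\}$ and $(Q_i\cap S)\cup\{a_{i+1}\}$, so all three \emph{are} separated; nothing ``clashes'', and the correct conclusion is only the forcing of the two connectors.) The genuine gap is that for $r=4$ and $r=5$ you stop at announcing a discharging argument and explicitly defer ``the crux''. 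These are the decisive degrees: for $r\ge 6$ twin-forcing alone suffices, so at the boundary with the cubic case the entire content of the proposition is the count you have not carried out. That count is short and you should supply it: call a block \emph{minimal} if it meets $S$ in exactly $r-2$ vertices, and let $M$ and $N$ be the numbers of minimal and non-minimal blocks, $M+N=k$. Every non-minimal block contributes at least $r-1$, so $|S|\ge (r-2)k+N$; every minimal block forces the two connectors $b_{i-1},a_{i+1}$ into $S$, and these $2M$ forced connectors are pairwise distinct vertices lying outside every $Q_j$, so also $|S|\ge (r-2)k+2M$. Since one of $2M$ and $N$ is at least $\tfrac{2k}{3}$, we get $|S|\ge (r-2)k+\tfrac{2k}{3}=\tfrac{3r-4}{3(r+1)}n>\tfrac{n}{2}$ for all $r\ge 4$; the paper packages the same estimate as ``any three consecutive blocks meet $S$ in at least $3r-4$ vertices'' on $3k$ blocks.

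Separately, your opening family $\{K_{r+1}:r\ge 4\}$ does satisfy the literal wording of the statement, but it produces only one graph per degree. Since the proposition's role in the paper is to show that Problem~\ref{ProbCubicTwins} cannot be extended to $r$-regular graphs even after excluding a \emph{finite} set of exceptions for each fixed $r$, a single counterexample per degree is not enough, and the complete-graph family cannot substitute for the per-degree infinite family; it can only serve as a warm-up.
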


\begin{proof}
Consider an $r$-regular graph $G_r$ on $n=(3r+3)k$ vertices for $r\geq4$ as in Figure~\ref{Fig_closedtwinsReg}. In particular, $G_r$ contains $3k$ copies of $(r-1)$-vertex cliques (ones within the %A which have
dashed line in Figure~\ref{Fig_closedtwinsReg}). Vertices in such cliques are closed twins. We denote these cliques by $Q_1,\dots,Q_{3k}$. 
Furthermore, each clique $Q_i$ is adjacent to two vertices, let us denote these by $a_i$ and $b_i$ so that there is an edge $a_ib_{i-1}$ for $2 \le i \le 3k$ and $a_1b_{3k}$.

Let us consider a minimum-sized locating-dominating set $S$ for $G_r$. In particular, we have $|S\cap Q_i|\geq r-2$ for each $i$. Let us denote by $c_i\in Q_i$ the single vertex which might not be in $Q_i\cap S$. We note that set $S\cap Q_i$ does not separate vertices $a_i,b_i,c_i$. Let us assume that $|(Q_i\cup\{a_i,b_i\})\cap S|=r-2$ for some $i$. We observe that then we have $b_{i-1},a_{i+1}\in S$. Hence, we have $|(Q_{i-1}\cup\{a_{i-1},b_{i-1}\})\cup(Q_i\cup\{a_i,b_i\})\cup(Q_{i+1}\cup\{a_{i+1},b_{i+1}\})\cap S|\geq 3r-4$. Note that if we had $|(Q_i\cup\{a_i,b_i\})\cap S|\geq r-1$ for each $i$, then we would have more vertices in $S$. Hence, we have $|S|\geq\frac{3r-4}{3r+3}n$. When $r=4$ this gives $\LD(G)\geq \frac{8}{15}n>\frac{n}{2}$. We note that this lower bound is attainable with the construction used in Figure~\ref{Fig_closedtwinsReg}.  
\end{proof}

%%%%%%%%%%%%%%%%%%%%%%%%%%%%%%%%%%%%%%%%%

\begin{figure}[!htb]
\centering
\begin{tikzpicture}[
blacknode/.style={circle, draw=black!, fill=black!, thick},
whitenode/.style={circle, draw=black!, fill=white!, thick},
scale=0.5]
\tiny
%\hspace{-4em}
%
\node[whitenode] (0) at (-2,0) {}; \node () at ($(0)+(0,0.8)$) {\small $a_1$};
\node[blacknode] (1) at (2,0) {}; \node () at ($(1)+(0,0.8)$) {\small $b_1$};
\node[whitenode] (2) at (4,0) {}; \node () at ($(2)+(0,0.8)$) {\small $a_2$};
\node[whitenode] (3) at (8,0) {}; \node () at ($(3)+(0,0.8)$) {\small $b_2$};
\node[blacknode] (4) at (10,0) {}; \node () at ($(4)+(0,0.8)$) {\small $a_3$};
\node[whitenode] (5) at (14,0) {}; \node () at ($(5)+(0,0.8)$) {\small $b_3$};
\node[blacknode] (6) at (18,0) {}; \node () at ($(6)+(0,0.8)$) {\small $a_{3k}$};
\node[whitenode] (7) at (22,0) {}; \node () at ($(7)+(0,0.8)$) {\small $b_{3k}$};

\node[blacknode] at (0,1.6) {};
\node[blacknode] at (0,0.6) {};
\node[blacknode] at (0,-0.4) {};
\node[whitenode] at (0,-1.4) {};

\node[blacknode] at (6,1.6) {};
\node[blacknode] at (6,0.6) {};
\node[blacknode] at (6,-0.4) {};
\node[whitenode] at (6,-1.4) {};

\node[blacknode] at (12,1.6) {};
\node[blacknode] at (12,0.6) {};
\node[blacknode] at (12,-0.4) {};
\node[whitenode] at (12,-1.4) {};

\node[blacknode] at (20,1.6) {};
\node[blacknode] at (20,0.6) {};
\node[blacknode] at (20,-0.4) {};
\node[whitenode] at (20,-1.4) {};
\node at (16,0) {$\mathbf{\cdots}$};
\draw[-, thick] (1) edge (2);
\draw[-, thick] (3) edge (4);
\draw[-, thick] (5) edge ($(5)!0.2!(6)$) edge[dotted] ($(5)!0.3!(6)$);
\draw[-, thick] (6) edge ($(6)!0.2!(5)$) edge[dotted] ($(6)!0.3!(5)$);
\draw[-, thick] (0) -- (-3,-1.5) .. controls (-4,-5) and (25,-5) .. (23,-1.5) -- (7);
\draw[thick, rounded corners] ($(0,0)+(-0.8,2.2)$) rectangle ($(0,0)+(0.8,-2.2)$) node at (0,3.4) {\small $(r-1)$-clique};
\draw[thick, rounded corners] ($(6,0)+(-0.8,2.2)$) rectangle ($(6,0)+(0.8,-2.2)$) node at (6,3.4) {\small $(r-1)$-clique};
\draw[thick, rounded corners] ($(12,0)+(-0.8,2.2)$) rectangle ($(12,0)+(0.8,-2.2)$) node at (12,3.4) {\small $(r-1)$-clique};
\draw[thick, rounded corners] ($(20,0)+(-0.8,2.2)$) rectangle ($(20,0)+(0.8,-2.2)$) node at (20,3.4) {\small $(r-1)$-clique};

\draw[thick, dashed, rounded corners] ($(0,0)+(-1.35,2.8)$) rectangle ($(0,0)+(1.35,-2.8)$);
\draw[thick, dashed, rounded corners] ($(6,0)+(-1.35,2.8)$) rectangle ($(6,0)+(1.35,-2.8)$);
\draw[thick, dashed, rounded corners] ($(12,0)+(-1.35,2.8)$) rectangle ($(12,0)+(1.35,-2.8)$);
\draw[thick, dashed, rounded corners] ($(20,0)+(-1.35,2.8)$) rectangle ($(20,0)+(1.35,-2.8)$);

\draw[-, thick] (0) edge ($(0)!0.6!(0,2)$) edge [dotted] ($(0)!0!(0,2)$);
\draw[-, thick] (0) edge ($(0)!0.35!(0,1)$) edge [dotted] ($(0)!0.55!(0,1)$);
\draw[-, thick] (0) edge ($(0)!0.35!(0,0)$) edge [dotted] ($(0)!0.53!(0,0)$);
\draw[-, thick] (0) edge ($(0)!0.35!(0,-1)$) edge [dotted] ($(0)!0.55!(0,-1)$);
\draw[-, thick] (0) edge ($(0)!0.6!(0,-2)$) edge [dotted] ($(0)!0!(0,-2)$);

\draw[-, thick] (1) edge ($(1)!0.6!(0,2)$) edge [dotted] ($(1)!0!(0,2)$);
\draw[-, thick] (1) edge ($(1)!0.35!(0,1)$) edge [dotted] ($(1)!0.55!(0,1)$);
\draw[-, thick] (1) edge ($(1)!0.35!(0,0)$) edge [dotted] ($(1)!0.53!(0,0)$);
\draw[-, thick] (1) edge ($(1)!0.35!(0,-1)$) edge [dotted] ($(1)!0.55!(0,-1)$);
\draw[-, thick] (1) edge ($(1)!0.6!(0,-2)$) edge [dotted] ($(1)!0!(0,-2)$);

\draw[-, thick] (2) edge ($(2)!0.6!(6,2)$) edge [dotted] ($(2)!0!(6,2)$);
\draw[-, thick] (2) edge ($(2)!0.35!(6,1)$) edge [dotted] ($(2)!0.55!(6,1)$);
\draw[-, thick] (2) edge ($(2)!0.35!(6,0)$) edge [dotted] ($(2)!0.53!(6,0)$);
\draw[-, thick] (2) edge ($(2)!0.35!(6,-1)$) edge [dotted] ($(2)!0.55!(6,-1)$);
\draw[-, thick] (2) edge ($(2)!0.6!(6,-2)$) edge [dotted] ($(2)!0!(6,-2)$);

\draw[-, thick] (3) edge ($(3)!0.6!(6,2)$) edge [dotted] ($(3)!0!(6,2)$);
\draw[-, thick] (3) edge ($(3)!0.35!(6,1)$) edge [dotted] ($(3)!0.55!(6,1)$);
\draw[-, thick] (3) edge ($(3)!0.35!(6,0)$) edge [dotted] ($(3)!0.53!(6,0)$);
\draw[-, thick] (3) edge ($(3)!0.35!(6,-1)$) edge [dotted] ($(3)!0.55!(6,-1)$);
\draw[-, thick] (3) edge ($(3)!0.6!(6,-2)$) edge [dotted] ($(3)!0!(6,-2)$);

\draw[-, thick] (4) edge ($(4)!0.6!(12,2)$) edge [dotted] ($(4)!0!(12,2)$);
\draw[-, thick] (4) edge ($(4)!0.35!(12,1)$) edge [dotted] ($(4)!0.55!(12,1)$);
\draw[-, thick] (4) edge ($(4)!0.35!(12,0)$) edge [dotted] ($(4)!0.53!(12,0)$);
\draw[-, thick] (4) edge ($(4)!0.35!(12,-1)$) edge [dotted] ($(4)!0.55!(12,-1)$);
\draw[-, thick] (4) edge ($(4)!0.6!(12,-2)$) edge [dotted] ($(4)!0!(12,-2)$);

\draw[-, thick] (5) edge ($(5)!0.6!(12,2)$) edge [dotted] ($(5)!0!(12,2)$);
\draw[-, thick] (5) edge ($(5)!0.35!(12,1)$) edge [dotted] ($(5)!0.55!(12,1)$);
\draw[-, thick] (5) edge ($(5)!0.35!(12,0)$) edge [dotted] ($(5)!0.53!(12,0)$);
\draw[-, thick] (5) edge ($(5)!0.35!(12,-1)$) edge [dotted] ($(5)!0.55!(12,-1)$);
\draw[-, thick] (5) edge ($(5)!0.6!(12,-2)$) edge [dotted] ($(5)!0!(12,-2)$);

\draw[-, thick] (6) edge ($(6)!0.6!(20,2)$) edge [dotted] ($(6)!0!(20,2)$);
\draw[-, thick] (6) edge ($(6)!0.35!(20,1)$) edge [dotted] ($(6)!0.55!(20,1)$);
\draw[-, thick] (6) edge ($(6)!0.35!(20,0)$) edge [dotted] ($(6)!0.53!(20,0)$);
\draw[-, thick] (6) edge ($(6)!0.35!(20,-1)$) edge [dotted] ($(6)!0.55!(20,-1)$);
\draw[-, thick] (6) edge ($(6)!0.6!(20,-2)$) edge [dotted] ($(6)!0!(20,-2)$);

\draw[-, thick] (7) edge ($(7)!0.6!(20,2)$) edge [dotted] ($(7)!0!(20,2)$);
\draw[-, thick] (7) edge ($(7)!0.35!(20,1)$) edge [dotted] ($(7)!0.55!(20,1)$);
\draw[-, thick] (7) edge ($(7)!0.35!(20,0)$) edge [dotted] ($(7)!0.53!(20,0)$);
\draw[-, thick] (7) edge ($(7)!0.35!(20,-1)$) edge [dotted] ($(7)!0.55!(20,-1)$);
\draw[-, thick] (7) edge ($(7)!0.6!(20,-2)$) edge [dotted] ($(7)!0!(20,-2)$);

\end{tikzpicture}
\caption{Example of an $r$-regular graph $G_r$ on $n=(3r+3)k$ vertices for which $\LD(G_r) = \frac{3r-4}{3r+3}n$. Therefore, for $r \ge 4$, we have $\LD(G_r) \ge \frac{8}{15}n$ which proves that Theorem~\ref{theclosedtwins} is not true for graphs of maximum degree greater than~$3$. The shaded vertices constitute a minimum LD-set.}\label{Fig_closedtwinsReg}
\end{figure}
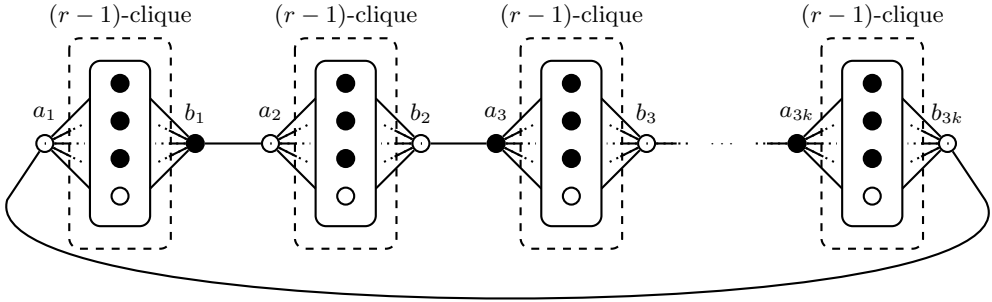

%%%%%%%%%%%%%%%%%%%%%%%%%%%%%%%%%%%%%%

The following proposition shows that Proposition~\ref{ProbSubcubic} is tight for an infinite family of twin-free subcubic graphs. We remark that there exists also a simpler tight construction of a path which has a leaf attached to all of its vertices. Note that in the case of cubic graphs, no tight constructions are known.

\begin{proposition}\label{PropTightSubcubic}
There exists an infinite family of connected twin-free subcubic graphs which have location-domination number equal to half of their order.
\end{proposition}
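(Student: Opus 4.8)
The plan is to exhibit an explicit infinite family $\{G_k\}_{k\ge 2}$ of connected twin-free subcubic graphs for which $\LD(G_k)=\tfrac{n}{2}$, where $n=n(G_k)$. The upper bound $\LD(G_k)\le \tfrac{n}{2}$ is handed to us for free by Proposition~\ref{propSubcubic} as soon as each $G_k$ is checked to be subcubic, connected and twin-free; consequently the entire content of the argument is the matching lower bound $\LD(G_k)\ge \tfrac{n}{2}$, and I would organise the write-up so that essentially all effort goes there.

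For the construction I would take a backbone on which a small \emph{half-forcing gadget} is hung at every vertex. The cleanest instance is the comb $G_k$: a path $p_1p_2\cdots p_k$ together with a pendant leaf $\ell_i$ attached to each $p_i$, so that $n=2k$. This $G_k$ is connected and subcubic ($\deg p_i\le 3$, $\deg \ell_i=1$), and it is twin-free: the leaves have pairwise distinct singleton neighbourhoods $N(\ell_i)=\{p_i\}$, the path vertices have pairwise distinct neighbourhoods (each containing its own leaf), and no two adjacent vertices share a closed neighbourhood. Verifying these three properties is the only routine bookkeeping step.

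The lower bound is the heart of the matter. For the comb it admits a one-line proof: the $k$ pairs $\{p_i,\ell_i\}$ partition $V(G_k)$, and since any LD-set $S$ must dominate each leaf we have $N[\ell_i]\cap S=\{p_i,\ell_i\}\cap S\neq\emptyset$, whence $|S|\ge k=\tfrac{n}{2}$. Combined with Proposition~\ref{propSubcubic} (or with the explicit LD-set $\{p_1,\dots,p_k\}$, whose non-solution vertices are the leaves with the pairwise distinct $I$-sets $I(\ell_i)=\{p_i\}$), this yields $\LD(G_k)=\tfrac{n}{2}$ and already settles the proposition.

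The genuine obstacle appears if one insists on a family that does not lean on pendant leaves, that is, a minimum-degree-$2$ family, which is what makes the statement interesting beyond the comb mentioned just before the statement. There the lower bound is no longer a domination count, because a backbone vertex placed in $S$ can dominate and separate vertices of two neighbouring gadgets at once, so a naive per-gadget tally double counts its usefulness. I would attack this with a discharging/share argument resting on the elementary separation principle that \emph{each $s\in S$ can be the unique dominator of at most one vertex outside $S$} (two vertices with $I$-set $\{s\}$ would coincide). Assigning to each gadget block $B$ the charge $|S\cap B|$, letting backbone vertices in $S$ redistribute a bounded amount of charge to the at most two adjacent blocks they assist, and using the principle above to cap that redistribution, one aims to show that every block keeps charge at least $\tfrac{|B|}{2}$; summing over the blocks then gives $|S|\ge \tfrac{n}{2}$. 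Designing the gadget so that this local inequality holds with the right boundary behaviour, and checking twin-freeness across the gadget/backbone interface, is the step I expect to require the most care.
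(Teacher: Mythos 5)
Your proof is correct and complete: the comb (a path $p_1p_2\cdots p_k$ with a pendant leaf $\ell_i$ at each $p_i$) is connected, subcubic and twin-free for $k\ge 2$, the $k$ pairwise disjoint closed neighbourhoods $N[\ell_i]=\{p_i,\ell_i\}$ force $|S|\ge k=\frac{n}{2}$ for any dominating set, and the set $\{p_1,\dots,p_k\}$ (or Proposition~\ref{propSubcubic}) supplies the matching upper bound, so $\LD(G_k)=\frac{n}{2}$. This is, however, not the construction the paper actually proves the proposition with: the authors mention your comb only in the remark immediately preceding the statement (``a path which has a leaf attached to all of its vertices'') and then present a different family on $n=8k+2$ vertices, namely a path $p_1\cdots p_{4k+1}$ with hanging vertices $u_i$ and extra edges $u_iu_{i+1}$ for $i\equiv 2,3 \pmod 4$; there the lower bound is no longer a pure domination count but a short case analysis showing that each $6$-cycle block $L_j$ must meet any LD-set in at least $3$ vertices. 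What your route buys is brevity --- the lower bound is one line; what the paper's route buys is a structurally richer tight family in which extremality is forced by separation constraints inside $6$-cycles rather than solely by leaf domination. One caveat on your final paragraph: the proposed discharging argument for a minimum-degree-$2$ family is speculative and not needed for the proposition, and the paper's own example does not achieve minimum degree $2$ either (the vertices $u_1,u_5,u_9,\dots$ are leaves), so no such extra work is required to match what the paper establishes.
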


\begin{proof}
Consider the graph $G$ on $n=8k+2$ vertices as in Figure~\ref{Fig_tightSubCub}. The graph consists of a path on $4k+1$ vertices. To each path vertex $p_i$, $1\leq i\leq 4k+1$, we join a new vertex $u_i$ and an edge from $u_i$ to $u_{i+1}$ if $i\equiv 2\mod4$ or $i\equiv 3\mod4$. Note that in particular $u_1$ and $u_{4k+1}$ are leaves. Consider next a minimum-sized LD-set $S$ in $G$. We note that for each pair $\{p_i,u_i\}$ where $u_i$ is a leaf, we have $\{p_i,u_i\}\cap S\neq \emptyset$ since $u_i$ is dominated by $S$. Let us next show that for each set $L_j=\{p_{j-1},p_j,p_{j+1},u_{j-1},u_j,u_{j+1}\}$ where the vertices contain a 6-cycle, we have $|L_j\cap S|\geq3$. Suppose on the contrary that $|L_j\cap S|\leq2$.
Assume first that $\{p_{j-1},u_{j-1}\}\cap S=\emptyset$. To dominate $u_{j-1}$, we have $u_j\in S$. The only single vertex that can separate both $p_j$ and $u_{j+1}$ from $u_{j-1}$ is $p_{j+1}$. However, vertices $u_j$ and $p_{j+1}$ cannot separate $p_j$ and $u_{j+1}$. Hence, the assumption $\{p_{j-1},u_{j-1}\}\cap S=\emptyset$ leads to $|L_j\cap S|\geq3$, a contradiction. Hence, by symmetry, we assume that $|\{p_{j-1},u_{j-1}\}\cap S|=1$ and $|\{p_{j+1},u_{j+1}\}\cap S|=1$ while $|\{p_{j},u_{j}\}\cap S|=0$. Notice that to dominate both $u_j$ and $p_j$ we have $L_j\cap S =\{p_{j-1},u_{j+1}\}$ or $L_j\cap S =\{p_{j+1},u_{j-1}\}$. However, the first of these options does not separate $u_{j-1}$ and $p_j$ while the second option does not separate $p_j$ and $u_{j+1}$. Hence, we have $|L_j\cap S|\geq3$. This implies that $\LD(G)\geq n/2$. By Proposition~\ref{ProbSubcubic}, we have $\LD(G)\leq n/2$. Therefore, $\LD(G)=n/2$.
\end{proof}

%%%%%%%%%%%%%%%%%%%%%%%%%%%%%%%%%%%%%%%%

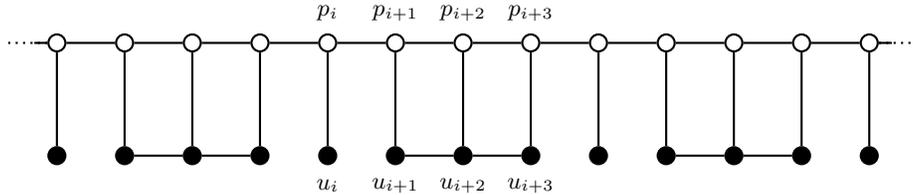
\begin{figure}[!htb]
\centering
\begin{tikzpicture}[
blacknode/.style={circle, draw=black!, fill=black!, thick},
whitenode/.style={circle, draw=black!, fill=white!, thick},
scale=0.5]
\tiny
%right
\node[whitenode] (0) at (0,0) {};
\node[whitenode] (1) at (1.8,0) {};
\node[whitenode] (2) at (3.6,0) {};
\node[whitenode] (3) at (5.4,0) {};
\node[whitenode] (4) at (7.2,0) {}; \node () at ($(4)+(0,0.8)$) {\small $p_i$};
\node[whitenode] (5) at (9,0) {}; \node () at ($(5)+(0,0.8)$) {\small $p_{i+1}$};
\node[whitenode] (6) at (10.8,0) {}; \node () at ($(6)+(0,0.8)$) {\small $p_{i+2}$};
\node[whitenode] (7) at (12.6,0) {}; \node () at ($(7)+(0,0.8)$) {\small $p_{i+3}$};
\node[whitenode] (8) at (14.4,0) {};
\node[whitenode] (9) at (16.2,0) {};
\node[whitenode] (10) at (18,0) {};
\node[whitenode] (11) at (19.8,0) {};
\node[whitenode] (12) at (21.6,0) {};

\node[blacknode] (0') at ($(0)+(0,-3)$) {};

\node[blacknode] (1') at ($(1)+(0,-3)$) {};
\node[blacknode] (2') at ($(2)+(0,-3)$) {};
\node[blacknode] (3') at ($(3)+(0,-3)$) {};

\node[blacknode] (4') at ($(4)+(0,-3)$) {}; \node () at ($(4')+(0,-0.8)$) {\small $u_i$};

\node[blacknode] (5') at ($(5)+(0,-3)$) {}; \node () at ($(5')+(0,-0.8)$) {\small $u_{i+1}$};
\node[blacknode] (6') at ($(6)+(0,-3)$) {}; \node () at ($(6')+(0,-0.8)$) {\small $u_{i+2}$};
\node[blacknode] (7') at ($(7)+(0,-3)$) {}; \node () at ($(7')+(0,-0.8)$) {\small $u_{i+3}$};

\node[blacknode] (8') at ($(8)+(0,-3)$) {};

\node[blacknode] (9') at ($(9)+(0,-3)$) {};
\node[blacknode] (10') at ($(10)+(0,-3)$) {};
\node[blacknode] (11') at ($(11)+(0,-3)$) {};

\node[blacknode] (12') at ($(12)+(0,-3)$) {};

\draw[-, thick] (0) edge ($(0)!0.3!(-2,0)$) edge [dotted] ($(0)!0.7!(-2,0)$);
\draw[-, thick] (12) edge ($(12)!0.3!(23.6,0)$) edge [dotted] ($(12)!0.7!(23.6,0)$);

\draw[-, thick] (0) -- (1);
\draw[-, thick] (1) -- (2);
\draw[-, thick] (2) -- (3);
\draw[-, thick] (3) -- (4);
\draw[-, thick] (4) -- (5);
\draw[-, thick] (5) -- (6);
\draw[-, thick] (6) -- (7);
\draw[-, thick] (7) -- (8);
\draw[-, thick] (8) -- (9);
\draw[-, thick] (9) -- (10);
\draw[-, thick] (10) -- (11);
\draw[-, thick] (11) -- (12);

\draw[-, thick] (0) -- (0');
\draw[-, thick] (1) -- (1');
\draw[-, thick] (2) -- (2');
\draw[-, thick] (3) -- (3');
\draw[-, thick] (4) -- (4');
\draw[-, thick] (5) -- (5');
\draw[-, thick] (6) -- (6');
\draw[-, thick] (7) -- (7');
\draw[-, thick] (8) -- (8');
\draw[-, thick] (9) -- (9');
\draw[-, thick] (10) -- (10');
\draw[-, thick] (11) -- (11');
\draw[-, thick] (12) -- (12');

\draw[-, thick] (1') -- (2');
\draw[-, thick] (2') -- (3');

\draw[-, thick] (5') -- (6');
\draw[-, thick] (6') -- (7');

\draw[-, thick] (9') -- (10');
\draw[-, thick] (10') -- (11');

\end{tikzpicture}
\caption{Example of a subcubic graph $G$ on $n$ verices for which $\LD(G) = \frac{n}{2}$. The shaded vertices constitute a minimum LD-set.} \label{Fig_tightSubCub}
\end{figure}

%%%%%%%%%%%%%%%%%%%%%%%%%%%%%%%%%%%%%

\section{Conclusion}\label{SecConclusion}

In this article, we have proven Conjecture~\ref{conj_Garijo} for subcubic graphs and answered positively to Problems \ref{ProbSubcubic} and \ref{ProbCubicTwins}. In particular, we show that for each connected subcubic graph $G$, other than $K_1,K_4,K_{3,3}$ and without open twins of degrees $1$ or $2$, we have $\LD(G)\leq \frac{n}{2}$. We have also shown that these restrictions on open twins are necessary and that similar relaxation of conditions in Conjecture~\ref{conj_Garijo} is not possible for $r$-regular graphs in general.

Furthermore, we have presented an infinite family of twin-free subcubic graphs for which this bound is tight. However, the only known tight examples for the $n/2$-bound over connected twin-free cubic graphs are on six and eight vertices. Moreover, 
we were unable to find any such tight example for the $n/2$-bound by going through all connected twin-free cubic graphs on ten vertices. On the other hand, the $10$-vertex graph in Figure~\ref{Fig_tightCubic} is an example of a cubic graph containing both open and closed twins for which the conjectured upper bound is tight.
In \cite{foucaud2016location}, Foucaud and Henning  asked to characterize every twin-free cubic graph which attains the $n/2$-bound. We present a new open problem in the same vein:

\begin{oproblem}\label{OP1}
Does there exist an infinite family of connected (twin-free) cubic graphs which have LD-number equal to half of their order?
\end{oproblem}

Considering the previous open problem is interesting for both twin-free cubic graphs and graphs which allow twins. It would even be interesting if one could find a single connected twin-free cubic on at least ten vertices which has LD-number equal to half of its order.
If there does not exist any such (twin-free) cubic graphs, that also prompts another open problem:

\begin{oproblem}\label{OP2}
What is the (asymptotically) tight upper bound for LD-number of connected (twin-free) cubic graphs on at least ten vertices?
\end{oproblem}

%%%%%%%%%%%%%%%%%%%%%%%%%%%%%%%%%%%%%

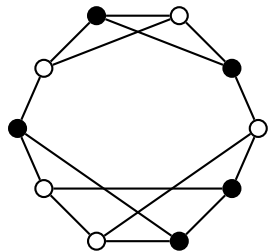
\begin{figure}[!htb]
\centering
\begin{tikzpicture}[
blacknode/.style={circle, draw=black!, fill=black!, thick},
whitenode/.style={circle, draw=black!, fill=white!, thick},
scale=0.5]
\tiny
%right
\node[whitenode] (0) at (-1.1,0) {};
\node[blacknode] (1) at (1.1,0) {};
\node[blacknode] (2) at (2.5,1.4) {};
\node[whitenode] (3) at (3.2,3) {};
\node[blacknode] (4) at (2.5,4.6) {};
\node[whitenode] (5) at (1.1,6) {};
\node[blacknode] (6) at (-1.1,6) {};
\node[whitenode] (7) at (-2.5,4.6) {};
\node[blacknode] (8) at (-3.2,3) {};
\node[whitenode] (9) at (-2.5,1.4) {};

\draw[-, thick] (0) -- (1);
\draw[-, thick] (1) -- (2);
\draw[-, thick] (2) -- (3);
\draw[-, thick] (3) -- (4);
\draw[-, thick] (4) -- (5);
\draw[-, thick] (5) -- (6);
\draw[-, thick] (6) -- (7);
\draw[-, thick] (7) -- (8);
\draw[-, thick] (8) -- (9);
\draw[-, thick] (9) -- (0);

\draw[-, thick] (0) -- (3);
\draw[-, thick] (1) -- (8);
\draw[-, thick] (9) -- (2);
\draw[-, thick] (6) -- (4);
\draw[-, thick] (5) -- (7);

\end{tikzpicture}
\caption{Example of a cubic graph $G$ on $n=10$ vertices containing both open and closed twins for which $\LD(G) = \frac{n}{2}$. The shaded vertices constitute a minimum LD-set.} \label{Fig_tightCubic}
\end{figure}

%%%%%%%%%%%%%%%%%%%%%%%%%%%%%%%%%%%%%%%%

\section*{Acknowledgements}

This project was partially funded by the Research Council of Finland grant number 338797 and the ANR project GRALMECO (ANR-21-CE48-0004).
D.~Chakraborty was partially funded by the French government IDEX-ISITE initiative CAP 20-25 (ANR-16-IDEX-0001) and the International Research Center ``Innovation Transportation and Production Systems" of the I-SITE CAP 20-25. Some of the work was done during D. Chakraborty's visit to the Department of Mathematics and Statistics, University of Turku, Turku, Finland.
A.~Hakanen was partially funded by Turku Collegium for Science, Medicine and Technology.
T.~Lehtil\"a was partially funded by the Business Finland Project 6GNTF, funding decision 10769/31/2022.
Some of the work of T.~Lehtilä was done during his stays at Department of Computer Science, University of Helsinki, Helsinki, Finland and Université Clermont-Auvergne, LIMOS, Clermont-Ferrand, France.

\bibliographystyle{abbrv}

\bibliography{CubicLD}

\end{document}